\newcommand\redout{\bgroup\markoverwith{\textcolor{red}{\rule[0.5ex]{2pt}{0.8pt}}}\ULon}
\newcommand\remove[1]{}
\newtheorem{definitionenv}{Definition}[section]
\newtheorem{lemmaenv}[definitionenv]{Lemma}
\newtheorem{theoremenv}[definitionenv]{Theorem}
\newtheorem{corollaryenv}[definitionenv]{Corollary}
\newtheorem{propositionenv}[definitionenv]{Proposition}
\newtheorem{conjectureenv}[definitionenv]{Conjecture}
\newtheorem{remarkenv}[definitionenv]{Remark}
\newenvironment{remark}{\begin{remarkenv}\rm}{\end{remarkenv}}
\newcommand{\br}{\begin{remark}}
	\newcommand{\er}{\end{remark}}
\newtheorem{exampleenv}{Example}
\newtheorem{app-lemmaenv}[section]{Lemma}
\newenvironment{definition}{\begin{definitionenv}\rm}{\end{definitionenv}}
\newenvironment{lemma}{\begin{lemmaenv}\rm}{\end{lemmaenv}}
\newenvironment{theorem}{\begin{theoremenv}\rm}{\end{theoremenv}}
\newenvironment{corollary}{\begin{corollaryenv}\rm}{\end{corollaryenv}}
\newenvironment{example}{\begin{exampleenv}\rm}{\end{exampleenv}}
\newenvironment{proposition}{\begin{propositionenv}\rm}{\end{propositionenv}}
\newenvironment{conjecture}{\begin{conjectureenv}\rm}{\end{conjectureenv}}
\newenvironment{app-lemma}{\begin{app-lemmaenv}\rm}{\end{app-lemmaenv}}
\newcommand{\bd}{\begin{definition}}
	\newcommand{\ed}{\end{definition}}
\newcommand{\bl}{\begin{lemma}}
	\newcommand{\el}{\end{lemma}}
\newcommand{\elp}{\hspace*{\fill} $\Box$
\end{lemma}}
\newcommand{\bt}{\begin{theorem}}
\newcommand{\et}{\end{theorem}}
\newcommand{\etp}{\hspace*{\fill} $\Box$
\end{theorem}}
\newcommand{\bc}{\begin{corollary}}
\newcommand{\ec}{\end{corollary}}
\newcommand{\ecp}{\hspace*{\fill} $\Box$
\end{corollary}}
\newcommand{\bcj}{\begin{conjecture}}
\newcommand{\ecj}{\end{conjecture}}
\newcommand{\be}{\begin{example}}
\newcommand{\ee}{\end{example}}
\newcommand{\eep}{\hspace*{\fill} $\Box$
\end{example}}
\newcommand{\bp}{\begin{proposition}}
\newcommand{\ep}{\end{proposition}}
\newcommand{\epp}{
\end{proposition}}
\newcommand\nd{\noindent}
\newcommand{\sD}{\mathscr{D}}
\newcommand{\cF}{\mathcal{F}}
\newcommand{\cH}{\mathcal{H}}
\newcommand{\cJ}{\mathcal{J}}
\newcommand{\reals}{{\mathbb {R}}}
\newcommand{\ff}{{\mathbb F}}
\newcommandx{\rednote}[2][1=]{\todo[linecolor=red,backgroundcolor=red!25,bordercolor=red,#1]{#2}}
\newcommandx{\bluenote}[2][1=]{\todo[linecolor=blue,backgroundcolor=blue!25,bordercolor=blue,#1]{#2}}
\newcommandx{\yellownote}[2][1=]{\todo[linecolor=yellow,backgroundcolor=yellow!25,bordercolor=yellow,#1]{#2}}
\newcommandx{\greennote}[2][1=]{\todo[inline,linecolor=olive,backgroundcolor=green!25,bordercolor=olive,#1]{#2}}
\newcommand{\rmark}[1]{{\color{red} #1}}
\begin{document}
\title[]{On the size of maximal binary codes with 2, 3, and 4 distances}

\author[]{Alexander Barg$^1$}\thanks{$^1$Department of ECE and ISR, University of Maryland, College Park, MD 20742, USA. Email: abarg@umd.edu.}
\author[]{Alexey Glazyrin$^2$}\thanks{$^2$School of Mathematical and Statistical Sciences, University of Texas Rio Grande Valley, Brownsville, TX 78520, USA. Email: alexey.glazyrin@utrgv.edu}
\author[]{Wei-Jiun Kao$^3$}\thanks{$^3$Institute of Mathematics, Academia Sinica, Taipei 10617, Taiwan. Email: nonamefour0210@gmail.com}
\author[]{Ching-Yi Lai$^4$}\thanks{$^4$Institute of Communications Engineering, National Yang Ming Chiao Tung University (NYCU), Hsinchu 30010, Taiwan, and Physics Division, National Center for Theoretical Sciences, Taipei 10617, Taiwan.
Email: cylai@nycu.edu.tw.} 
\author[]{Pin-Chieh Tseng$^5$}\thanks{$^5$Institute of Communications Engineering and the Department of Applied Mathematics, National Yang Ming Chiao Tung University (NYCU), Hsinchu 30010, Taiwan.
Email: pichtseng@gmail.com.}
\author[]{Wei-Hsuan Yu$^6$}
\thanks{$^6$Department of Mathematics, National Central University, Taoyuan 32001, Taiwan. Email: u690604@gmail.com.}

\begin{abstract}
We address the maximum size of binary codes and binary constant weight codes with few distances. Previous works established a number of bounds for these quantities as well as the exact values for a range of small code lengths. As our main results, we determine the exact size of maximal binary codes with two distances for all lengths $n\ge 6$ as well as the exact size of maximal binary constant weight codes with 2,3, and 4 distances for several values of the weight and for all but small lengths. 
\end{abstract}

\maketitle

\section{Introduction}
Let $M$ be a metric space with metric $d$ that defines the distance between any pair of points in $M$. 
For  a finite set $X \subset M$, the set of distinct distances in $X$, defined as $ \sD(X)=\{ d(x, y) : x\neq  y \in X\},$
is called a \textit{distance set} of $X$.
We say that $X$ is an \textit{$s$-distance set} in $M$ if  $|\sD(X)|=s$. 
For instance, the eight vertices of a cube in $\mathbb{R}^3$ form a three-distance set.

Denote by $A(M,s)$ the maximum cardinality of an $s$-distance set in $M$. 
If $\sD=\{a_1,\dots,a_s\}$ is a set of allowed distances, then we may also use a more detailed notation $A(M,\sD).$ When the distances are specified explicitly, e.g., $\sD=\{a,b\},$ we also write $A(M, \{a,b\})$ for the maximum size of s-distance sets in the space $M$ with distances $a$ and $b$.

The maximum size of $s$-distance sets has been studied for a range of metric spaces. The first to study it were Einhorn and Schoenberg~\cite{ES66I},~\cite{ES66II} who addressed the case of the
$n$-dimensional Euclidean space $M=\reals^n$. This problem is still far from being resolved. As an example, it was proved
only recently that the only maximum three-distance set in $\mathbb{R}^3$ is formed by the 12 vertices of a regular icosahedron, and thus $A(\mathbb{R}^3, 3)=12$; similarly $A(\mathbb{R}^3,5)=20,$ attained by the vertex set of a dodecahedron (see \cite{NozakiShinohara2021}, which also contains other examples). 

The problem of bounding the maximum cardinality of $s$-distance sets has been actively studied for the case of $M=S^{n-1}(\reals),$ the
unit sphere in the $n$-dimensional real space. Delsarte, Goethals, and Seidel \cite{DGS77} proved a general upper estimate
on the size of spherical $s$-distance sets called the {\em harmonic bound}. Their results were improved in a series of recent papers
\cite{M09,Noz11,BM11,MN11,barg2013two,GY18,JTY+20,liu2020semidefinite} and other works that focused in particular on the case of $s=2,3,4$. The case of two-distance 
spherical codes is the most studied one. Apart from being of interest in their own right, spherical 2-distance
sets exhibit deep connections with a number of well-known objects in algebraic combinatorics, notably strongly regular graphs, tight frames, and spherical designs \cite{DGS77,BB05,BGOY15}. Recently the problem of bounding the maximum size of such codes was resolved for most values of the dimension $n$ \cite{GY18}. Bounds obtained in this paper form a starting point of one of our theorems, and we provide more details on it in the main text below. Further narrowing down the problem, many papers
have addressed the maximum size of spherical 2-distance sets in which every pair of vectors have inner product $\alpha$ or $-\alpha$ for some value $\alpha\in(0,1),$ also known as sets of equiangular lines. This problem also enjoys long history in the literature with a 
number of links in algebraic combinatorics, and we refer the reader to \cite{barg2013new,GY18,BDKS18,JTY+21} for recent advances and a general overview.

The maximum cardinality problem for $s$-distance sets has also been studied for metric spaces commonly considered in coding theory, 
starting with the Hamming space $\cH_q^n$ of $n$-dimensional vectors over a finite field $\ff_q.$ Linear two-distance codes in particular
form a classic object in coding theory because of their links to finite geometries, see for instance an early survey \cite{CK86} as well as recent works \cite{SHS+21}; \cite[Ch.7]{BvM22}. Linear codes with few weights are also often
constructed in the framework of cyclic codes and Boolean functions, in particular, bent functions \cite{M18}. 
This line of research is a subject of vast literature, starting with classical families such as duals of 2-error-correcting BCH codes, Kerdock codes and their relatives \cite{MS77} and many other families. Some recent additions to the literature are \cite{WZHZ15,D16,LLHQ21} (the reader is cautioned that this sample is far from complete in many respects).

At the same time, most of the code families studied in the context of coding theory limit not just on the number of distances but also
aim at constructing codes with large minimum distance. As a result, the codes thus obtained are of cardinality that is much smaller than 
the maximum possible if the latter constraint is removed. It is this problem that we seek to resolve in this work: what is the maximum size
of a binary code with two distances (linear or not) when the minimum separation between distinct codewords is unrestricted?
In some cases, notably for binary codes with two distances we give a complete answer, where previously the exact values were 
known only for a range of small code lengths $n$.

In addition to $\cH_{2}^{n}$ we study codes in the binary Johnson space $\cJ^{n, w},$ formed of all binary vectors of length $n$ and Hamming weight $w$ such that $2w \leq n$. For two vectors $x,y\in \cJ^{n,w}$ we define the {\em Johnson distance} $d_J(x,y)=(1/2)d_H(x,y),$ where $d_H(x,y)=|\{i: x_i
\ne y_i\}|$ is the Hamming distance. The problem of determining the maximum size of codes in $\cJ^{n,w}$ with few distances recently was the subject of several papers in coding theory literature \cite{BM11,MN11}. This problem can also be phrased in different terms that highlight connections in extremal combinatorics. A code $C\subset \cJ^{n,w}$ can be thought of as a family of $w$-subsets of an $n$-set, and $|x\cap y|=w-d_J(x,y),$ so disallowing some values of the distance is tantamount to forbidding some intersections of the subsets in the family. For instance, the celebrated Erd{\H o}s-Ko-Rado theorem \cite{EKR61}; \cite{FT18,GM16} is a statement about the maximum size of a code in $\cJ^{n,w}$ with $\sD=\{1,2,\dots,w-t\},$ where $t\ge1$ is some fixed number. Generally, the problem of estimating the maximum size of families of finite sets with a prescribed set of intersections has been the subject of a long line of works in combinatorics; we refer to recent overviews in \cite{FT18,E22}.

Following the convention of coding theory, we call an $s$-distance set in the Hamming and Johnson spaces an \textit{$s$-code}. The maximum cardinality of an $s$-code can be bounded above using Delsarte's linear programming (LP) method \cite{D73}; however this approach 
typically does not yield closed-form results. The same applies to Schrijver's semidefinite programming (SDP) method \cite{S05,GMS12} which represents a
far-reaching extension of the LP bound. Delsarte \cite{D73,Del73b} also proved a general upper bound on the size of $s$-codes, called the {\em harmonic bound}; see also the paper \cite{NS10} for a recent improvement. Relying on the harmonic bound, the authors of \cite{BM11} determined the maximum cardinality $A(\cH_{2}^{n}, 2)$ and $A(\cJ^{n, w}, 2)$ in a number of cases, and paper \cite{MN11} extended their results to $s = 3$ and $4$. 

Despite the described advances, the exact values of $A(M,s)$ for $M=\cH_2^n$ or $\cJ^{n,w}$ were known only for a limited range
of values of $n$. For instance, it was shown in \cite{BM11} that $A(\cH_2^n,2)=1+\binom n2$ for $6\le n\le 78$ with a number of
exceptions in that range left unresolved, and \cite{MN11} showed that $A(\cH_2^n,3)= n + \binom{n}{3}$ for $8\le n\le 37$ and $n=44$,
with the exception of $n=23,34,35$ (while $n=23$ constitutes a true exception due to the fact that the dual Golay code ${\mathcal G}_{23}^\bot$ yields $A(\cH_{2}^{23}, 3)\ge 2048$, the other two values are not as we will observe below). As one of our main results we prove
\begin{theorem}\label{thm:Hamming2}
$A(\cH_2^n,2)=1+\binom n2$ for all $n\ge 6.$
\end{theorem}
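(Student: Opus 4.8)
The lower bound is witnessed by the code $C_0=\{\mathbf 0\}\cup\{e_i+e_j:1\le i<j\le n\}$, consisting of the zero vector together with all $\binom n2$ weight-$2$ vectors: two weight-$2$ vectors are at Hamming distance $2$ or $4$ according as their supports meet or not, and $\mathbf 0$ is at distance $2$ from each, so $\sD(C_0)=\{2,4\}$ and $|C_0|=1+\binom n2$. It remains to prove the matching upper bound, and here the whole difficulty is concentrated in one case, as I now explain.

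For the upper bound I would first translate so that $\mathbf 0\in C$; then every nonzero codeword has weight $a$ or $b$, where $\{a,b\}=\sD(C)$ with $a<b$, and every pairwise distance lies in $\{a,b\}$. Writing $C_a,C_b$ for the codewords of weight $a,b$, we have $|C|=1+|C_a|+|C_b|$, and the argument splits on the parities of $a,b$ via $d_H(x,y)\equiv w(x)+w(y)\pmod 2$. \emph{If $a,b$ are both odd}, any two nonzero codewords have even mutual distance, which cannot lie in $\{a,b\}$, so $|C|\le 2$. \emph{If $a,b$ have opposite parity}, equal-weight codewords have even mutual distance, forced to equal the even member of $\{a,b\}$; hence $C_a$ and $C_b$ are each equidistant. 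Since $N$ points of $\reals^n$ carrying a single pairwise distance form a regular simplex and so satisfy $N\le n+1$, each of $C_a,C_b$ has at most $n+1$ elements, giving $|C|\le 2n+3\le 1+\binom n2$ for all $n\ge 6$.

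\emph{The case $a,b$ both even} is the crux: parity imposes no restriction, and the extremal configuration $\{2,4\}$ lives here, so the bound must be shown to be \emph{tight} rather than merely excluded. I would use Delsarte's harmonic bound as a backbone to get $|C|\le 1+n+\binom n2$, reducing the task to ruling out the top $n$ values, and then pass to extremal set theory: viewing $C_a,C_b$ as families of $a$- and $b$-subsets of $[n]$, the admissible distances become the intersection conditions $|A\cap A'|\in\{a-\tfrac a2,\ a-\tfrac b2\}$ within $C_a$, together with the analogous conditions inside $C_b$ and across $C_a,C_b$. Whenever $b>2a$ the value $a-\tfrac b2$ is negative, forcing the relevant family to be equidistant (for $a=2,\ b\ge 6$ this makes $C_2$ an intersecting family of pairs), so it is small by the simplex or sunflower bounds and the target is met with room to spare.

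The genuinely tight subcase is $\{a,b\}=\{2,4\}$, where the conditions say only that two $4$-sets meet in $2$ or $3$ points and that no $2$-set is disjoint from any $4$-set. Here one argues that admitting a $4$-subset $B$ into the family forces the exclusion of all $\binom{n-4}2\ge 1$ (for $n\ge 6$) pairs disjoint from $B$, so no family beats the all-pairs family of size $\binom n2$; this also yields the equality characterization matching $C_0$. The main obstacle is precisely this both-even analysis: carrying the intersection bounds uniformly over all even pairs $(a,b)$ with $a\ge 4$, where $C_a$ and $C_b$ can each be moderately large and interact, and proving the sharp $\{2,4\}$ estimate together with its extremal characterization.
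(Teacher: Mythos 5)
Your lower bound and your parity reductions are correct: translating so that $\mathbf 0\in C$ is legitimate, the both-odd case forces $|C|\le 2$, and in the mixed-parity case both layers $C_a,C_b$ are equidistant, so the simplex bound gives $|C|\le 2n+3\le 1+\binom n2$ for $n\ge 6$. This is a genuinely different (and pleasantly elementary) opening from the paper's, and it is worth noting that it would even dispose of the paper's exceptional parameters $a=(m+1)(2m+1)$, $b=m(2m+1)$, $n\in\{(2m+1)^2-2,(2m+1)^2-3\}$ of Theorem~\ref{thm:ham_partial_pro}, since there $a-b=2m+1$ is odd, i.e.\ the parities are mixed. However, the proposal then misplaces the difficulty and leaves the actual hard case open. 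The pair $\{2,4\}$, which you treat as the crux, satisfies $a+b=6\le n$, and indeed \emph{every} pair with $a+b\le n$ is settled at once by the harmonic bound in the form of Theorem~\ref{thm:ham_up} (the Krawtchouk coefficient $f_1$ is nonpositive exactly in this regime), so your compression heuristic for $\{2,4\}$ -- which in any case is an exchange argument sketch, not a proof -- is solving a problem that is already closed.

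The genuine gap is the case of both distances even with $a+b>n$ (for instance $\sD=\{10,8\}$ in $\cH_2^{16}$). There parity gives nothing, $f_1>0$ so the harmonic bound only yields $1+n+\binom n2$, and you offer no mechanism for ``ruling out the top $n$ values.'' Your sub-case $b>2a$ is also not finished as stated: the distance $b$ cannot occur inside $C_a$, so $C_a$ is equidistant and small, but $C_b$ remains a two-distance constant-weight family whose intersections $b-\frac a2$ and $\frac b2$ are both admissible, so a priori $|C_b|=\Theta(n^2)$ and ``met with room to spare'' is unjustified (a complementation trick can salvage $b>2a$ when $a+b>n$, but you do not make it, and it does not extend). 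For the remaining range $a<b\le 2a$, both even, $a+b>n$, you concede you have no argument -- and this is precisely the regime the paper spends its machinery on: embedding $C$ in $S^{n-1}$ so that $\alpha+\beta<0$, lifting to an equiangular set in $S^{n}$ (Lemma~\ref{lem_sph}), invoking the recent equiangular-line bounds collected in Lemma~\ref{lem_equ} (notably \cite{Yu17,GY18}), and finishing the leftover parameters with the two-distance bound of Theorem~\ref{thm:2-dist}. No sunflower/simplex-type argument is known to suffice there, so as it stands the proposal is an incomplete sketch rather than a proof.
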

The idea of the proof is to embed a binary code into a sphere $S^{n-1}$ and relate its size to spherical two-distance sets and
spherical equiangular sets, relying on bounds for them proved recently in \cite{GY18}.

\begin{table}[t]\label{table:gb}
\caption{{\small Values of $A(J^{n,w},s)$}}
{\small \begin{tabular}{|c|*{10}{|c}|}
\hline
$w$&     4  &5 &6 &5 &6 &7 &6 &7 &any\\[.03in]
$s$ &    2  &2 &2 &3 &3 &3 &4 &4 &$w-1$\\[.03in]
$A(\cJ^{n,w},s)= $ & $\binom{n-2}2$& $\binom{n-3}2$ &$\binom{n-4}2$ &$\binom{n-2}3$ &$\binom{n-3}3$ &$\binom{n-4}3$ &$\binom{n-2}3$ &$\binom{n-3}3$ &$\binom{n-1}{w-1}$ \\[.03in]
for $n\ge$ &9 &12 &35 &12 &16 &{20} &15 &20 &$2w$\\[.04in]
Thm/Cor &\ref{thm:Johnson2}(a) &\ref{thm:Johnson2}(b) &\ref{thm:Johnson2}(c)&\ref{thm:Johnson3}(b)&\ref{thm:Johnson3}(c)&\ref{thm:Johnson3}(d)&\ref{thm:Johnson4}(b)&\ref{thm:Johnson4}(c)&\ref{cor:ww-1}\\
\hline
\end{tabular}}
\end{table}

{\begin{remark} The maximum size of 2-distance codes in $\cH_q^n$ was recently studied in \cite{BDZZ21} which put forward a conjecture that 
   $$
A(\cH^{n}_{q}, \{2, 4\})=1+\binom n2 \text{ for } n \geq 6 \text{ and } q = 2, 3, 4.
   $$
We note that for $q=2$ this equality follows already from Theorem~\ref{thm:ham_up}
(an independent different proof was recently given in \cite{LR21}).
Our Theorem \ref{thm:Hamming2} extends this result to all values of the distances.
\end{remark}}

We further establish a number of exact general results for $A(\cJ^{n,w},s)$ for $3\le w\le 7$ and $2\le s\le 4$, detailed in Section \ref{sec:small-w}. Here the main vehicle is application of the LP method. We remarked above that general resuls based on it are difficult to come by; an observation
that makes our results possible is that a small subset of Delsarte's inequalities suffice to constrain the
size of the code, and this small subset can be analyzed in a general form rather than numerically. Outside the above range of values
of $s$ and $w$ we obtain a few new exact results by computer relying on LP as well as on Schrijver's SDP bound.

For convenience we list the new bounds for $A(\cJ^{n,w},s)$ in Table~\ref{table:gb} with references to the statements proved in the main text. {Earlier results \cite{MN11} additionally show that $A(\cJ^{n,6},2)=\binom{n-4}2$ for $15\le n\le 24.$  
We discuss these as well as other previously known and new numerical results in Sec.~\ref{sec:numerical} below.}

Based on our new results and known bounds (including the Erd\H{o}s-Ko-Rado theorem), we formulate a general conjecture for $s$-distance sets in the Johnson space.

\begin{conjecture}\label{conj:Johnson_general}
For all $w>s>0$ and sufficiently large $n\geq n_0(w,s)$, $$A(\cJ^{n,w},s)=\binom{n-w+s}s.$$
\end{conjecture}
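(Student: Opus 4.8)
The plan is to prove matching lower and upper bounds, with the construction already visible from the $s=w-1$ case in Corollary~\ref{cor:ww-1}. For the lower bound I would use the natural kernel construction: fix a set $F\subseteq[n]$ with $|F|=w-s$ and let $C$ be the family of all $w$-subsets containing $F$. Identifying codewords with $w$-sets and using $|x\cap y|=w-d_J(x,y)$, for $x=F\cup A$ and $y=F\cup B$ we get $|x\cap y|=(w-s)+|A\cap B|$, where $A,B$ range over $s$-subsets of $[n]\setminus F$. For distinct $A\ne B$ we have $|A\cap B|\in\{0,1,\dots,s-1\}$, so the Johnson distance takes exactly the $s$ values $\{1,2,\dots,s\}$, i.e.\ $C$ is an $s$-code of size $\binom{n-(w-s)}{s}=\binom{n-w+s}{s}$. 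This is valid once $n\ge w+s$ so that every intersection value $0,\dots,s-1$ is realized.

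For the upper bound I would deploy Delsarte's LP method in the Johnson scheme $J(n,w)$, exactly the mechanism behind the explicit theorems of Section~\ref{sec:small-w}. An $s$-code corresponds to a family of $w$-sets whose pairwise intersections take a fixed set $L$ of $s$ values in $\{0,1,\dots,w-1\}$, equivalently an $s$-distance set with prescribed $\sD$. For each admissible pattern $L$ I would build the degree-$s$ annihilator certificate $f(i)=\prod_{d\in\sD}(1-i/d)$, expand it in the Hahn polynomials $Q_0,\dots,Q_s$ that furnish the positive-semidefinite dual basis of the scheme, and, provided the coefficients beyond $Q_0$ are nonnegative, read off $|C|\le f(0)/f_0$. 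The value $f(0)/f_0$ depends on $L$, so the quantity to control is its maximum over all $\binom{w}{s}$ admissible patterns, and the core claim is that for $n\ge n_0(w,s)$ this maximum is attained by the ``top block'' $L=\{w-s,\dots,w-1\}$ coming from the construction and equals $\binom{n-w+s}{s}$.

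The main obstacle is twofold. First, one must control the LP bound simultaneously over all patterns $L$ and all pairs $(w,s)$ rather than case by case; this requires the asymptotic expansion in $n$ of the Eberlein/Hahn polynomials and a genuine proof that the densest pattern dominates. Second, and more seriously, the plain degree-$s$ annihilator need not be a valid certificate for every $L$: its Hahn coefficients can fail to be nonnegative, so one must augment it with a few additional Delsarte inequalities---precisely the ``small subset of inequalities suffices'' phenomenon that the paper exploits for small $w,s$. I would first confirm that the scheme reproduces the known cases $w\le 7$, $s\le 4$ in Table~\ref{table:gb} and the Erd\H{o}s--Ko--Rado case $s=w-1$, and only then attempt the uniform domination estimate.

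As a hedge I would keep a more combinatorial route in reserve. Viewing $C$ as an $(n,w,L)$ intersecting system, one can try to invoke a Deza--Frankl-type kernel and stability argument to show that for $n\ge n_0(w,s)$ any near-extremal family is forced into a kernel construction, so $|C|\le\binom{n-w+s}{s}$. Already the case $s=1$ is instructive: there the statement reduces to Deza's theorem on families with a single intersection size, whose extremal configurations are sunflowers, and the optimal choice of the intersection value is $w-1$, recovering $\binom{n-w+1}{1}=n-w+1$. The difficulty in this route is quantitative---making the stability threshold $n_0(w,s)$ explicit and ruling out competing sunflower-type families for every admissible $L$---but it may succeed where the certificate positivity is hardest to verify directly.
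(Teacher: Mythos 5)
The statement you are attempting is Conjecture~\ref{conj:Johnson_general}: the paper does not prove it, and it remains open except for the special cases established elsewhere in the text ($s=w-1$ via Corollary~\ref{cor:ww-1}, consecutive distances $\{1,\dots,s\}$ via Erd\H{o}s--Ko--Rado and Wilson as in \eqref{eq:2-EKR}, and the small-$(w,s)$ cases of Theorems~\ref{thm:Johnson2}--\ref{thm:Johnson4}). Your lower bound is correct and is exactly the paper's Proposition~\ref{prop:MN_J2}: the family of all $w$-sets containing a fixed $(w-s)$-set is an $s$-code with distance set $\{1,\dots,s\}$ and size $\binom{n-w+s}{s}$, valid for $n\ge w+s$. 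So the entire content of the conjecture lies in the upper bound, uniformly over all admissible distance patterns, and there your plan does not go through.

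The concrete failure is in your ``core claim'' --- that the degree-$s$ LP/annihilator certificate value, maximized over the $\binom{w}{s}$ patterns $L$, is attained by the top block and equals $\binom{n-w+s}{s}$ for large $n$. This is contradicted by the paper's own Remark~\ref{rmk:72} and Theorem~\ref{thm:asymptotic_2-dist}: for $w=7$, $s=2$, $\sD=\{2,4\}$ (a pair that survives the integrality screening of Theorem~\ref{thm:LRS_J}), the best bound obtainable from Theorem~\ref{thm:Joh_lp_2} is $c(7,2,4)\,n^2(1+o(1))=\frac{35}{64}n^2(1+o(1))$, strictly exceeding $\binom{n-2}{2}\sim\frac{1}{2}n^2$; more generally the method fails whenever $d_2-d_1$ divides $d_1$ and $c(w,d_1,d_2)\geq\frac{1}{2}$. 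The Hahn-coefficient positivity issue you flag is thus not a technicality one can patch with ``a few additional Delsarte inequalities'' of the kind used in Section~\ref{sec:small-w}; the paper further reports that even its SDP experiments do not close the gap in general. Your hedge --- a Deza--Frankl/Frankl--F\"uredi stability route treating $C$ as an $(n,w,L)$-system --- is the more plausible direction (it is essentially how the $s=w-1$ case is handled, via Theorem~\ref{thm:wl} together with EKR), but you have not carried it out: proving that every $L$-system is dominated by the top-block value for an explicit $n_0(w,s)$ is precisely the open content of the conjecture, not a step you can defer to a citation.
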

The Erd\H{o}s-Ko-Rado theorem corresponds to the case of $s$ {\em consecutive} distances
$\sD=\{1,2,\dots,s\},$ see Sec.~\ref{sec:forbidden}.

\section{The harmonic bound, its improvements and implications}
\subsection{The Hamming space}

Recall that the set of polynomials
\begin{equation}\label{eq:Krawtchouk}
    \phi_{k}(x) = \sum_{j = 0}^{k} (-1)^{j}\binom{x}{j}\binom{n-x}{k-j}, k=0,1,\dots,n
\end{equation}
called the {\em binary Krawtchouk polynomials}, forms an orthogonal basis in the space of real functions on $\{0,1,\dots,n\}$ with weight $\binom ni,i=0,\dots,n$  \cite{D73},\,\cite[Ch.5]{MS77}. Any such function has a unique expansion as a sum of Krawtchouk polynomials, namely $f(x)=\sum_{i\ge 0} f_i \phi_i(x),$ where $f_i=2^{-n}\sum_{j\ge0} f(j)\phi_i(j)\binom nj.$

Delsarte \cite{D73,Del73b} proved an upper bound on $A(\cH_2^n,s)$, called the {\em harmonic bound} 
(it is derived relying on the dimension of the space of spherical harmonics). 
In the next theorem we quote this bound accounting for an improvement found in \cite{NS10}. 
\begin{theorem}\cite{NS10} \label{thm:HB}
Let $C \subset \cH_{2}^{n}$ be an $s$-code with distances $\sD=\{d_{1}, \dots, d_{s}\}$. Consider the polynomial $f(t) = \prod_{i} \frac{d_{i}-t}{d_{i}}= \sum_{k = 0}^{s}f_{k}\phi_k(t)$. Then
\[
A(\cH_2^n,\sD) \leq \sum_{k: f_{k} > 0} \binom{n}{k}.
\]
\end{theorem}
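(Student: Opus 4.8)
The plan is to encode the evaluations of $f$ on the pairwise distances of the code as a single matrix identity and then extract the bound from a rank comparison. I would set $N=|C|$ and index the rows and columns of an $N\times N$ matrix by the codewords, forming $F=\big(f(d_H(x,y))\big)_{x,y\in C}$. By construction $f(0)=\prod_i \tfrac{d_i}{d_i}=1$, while $f(d_j)=0$ for each $j$, since the factor $\tfrac{d_j-t}{d_j}$ vanishes at $t=d_j$. Because every pair of distinct codewords is separated by a distance in $\sD$, the off-diagonal entries of $F$ all vanish and the diagonal entries equal $1$; that is, $F=I_N$.

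Next I would pass to the character basis of $\ff_2^n$. Writing $\chi_u(x)=(-1)^{u\cdot x}$ for $u\in\ff_2^n$, the identity $\sum_{|u|=k}(-1)^{u\cdot z}=\phi_k(|z|)$ holds: splitting the support of $u$ according to whether it meets the support of $z$ and counting reproduces exactly the right-hand side of \eqref{eq:Krawtchouk}. Taking $z=x\oplus y$ and using $\chi_u(x\oplus y)=\chi_u(x)\chi_u(y)$ gives $\phi_k(d_H(x,y))=\sum_{|u|=k}\chi_u(x)\chi_u(y)$. Substituting the expansion $f=\sum_{k=0}^s f_k\phi_k$ into $F$ then produces
\[
F=\sum_{k=0}^s f_k\sum_{|u|=k} c_u c_u^{\mathsf T},\qquad c_u:=\big(\chi_u(x)\big)_{x\in C}\in\reals^N,
\]
expressing $F$ as a signed combination of rank-one outer products.

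The decisive step, which is what yields the improvement of \cite{NS10} over the plain harmonic bound, is to separate these terms by the sign of $f_k$ and move the negative ones across the identity $F=I_N$:
\[
I_N+\sum_{k:\,f_k<0}|f_k|\sum_{|u|=k}c_u c_u^{\mathsf T}=\sum_{k:\,f_k>0}f_k\sum_{|u|=k}c_u c_u^{\mathsf T}.
\]
The left-hand side is $I_N$ plus a positive semidefinite matrix, hence positive definite and of rank $N$. The right-hand side equals $AA^{\mathsf T}$, where $A$ is the $N\times m$ matrix whose columns are the vectors $\sqrt{f_k}\,c_u$ ranging over the $u$ with $f_{|u|}>0$, and $m=\sum_{k:\,f_k>0}\binom nk$. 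Since $\mathrm{rank}(AA^{\mathsf T})=\mathrm{rank}(A)\le m$, comparing the ranks of these two equal matrices forces $N\le m$, which is precisely the asserted inequality.

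I expect the only genuinely substantive ingredient to be the character–Krawtchouk identity; the remainder is bookkeeping and a standard rank argument. The point that must be handled with care is the sign separation: it is essential that the negative-coefficient terms be grouped with $I_N$ so that positive definiteness, and thus full rank $N$, is retained, while the surviving right-hand side stays a genuine Gram matrix whose rank is bounded by the number of positive-coefficient characters rather than by $N$.
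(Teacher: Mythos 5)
Your proof is correct: the identity $F=I_N$, the character--Krawtchouk addition formula $\phi_k(d_H(x,y))=\sum_{|u|=k}\chi_u(x)\chi_u(y)$, and the sign-separated rank comparison constitute exactly the argument of \cite{NS10}, which proves the bound for spherical codes via the Gegenbauer addition formula, transplanted to $\cH_2^n$ with characters playing the role of spherical harmonics. The paper states this theorem by citation only, remarking that the spherical proof extends to the Hamming and Johnson spaces by standard tools---your write-up, including the essential point that the negative-coefficient terms must be absorbed into the identity to keep the left side positive definite of rank $N$ while the right side remains a Gram matrix of rank at most $\sum_{k:f_k>0}\binom{n}{k}$, is precisely that standard extension.
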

In \cite{NS10} this theorem is proved for spherical codes, but it can be extended to other polynomial metric spaces including
$\cH_2^n$ and $\cJ^{n,w}$ using standard tools.

A concrete form of Theorem \ref{thm:HB} under the condition that the sum of 
the code's distances does not exceed $\frac{1}{2}sn$ is as follows.
\begin{theorem}\cite[Theorem $11$]{BM11}
\label{thm:ham_up}
Suppose that $\sD=\{d_1,\dots,d_s\}$ is such that $\sum_{i = 1}^{s} d_{i} \leq \frac{1}{2}sn.$ Then
\begin{equation*}
    A(\cH_2^n,\sD) \leq \sum_{i = 0}^{s-2} \binom{n}{i} + \binom{n}{s}.
\end{equation*}
\end{theorem}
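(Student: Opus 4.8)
The plan is to invoke the harmonic bound of Theorem~\ref{thm:HB} and to show that, under the hypothesis $\sum_i d_i\le\tfrac12 sn$, the index $s-1$ is absent from the set $\{k:f_k>0\}$ while the index $s$ is present. Granting this, the right-hand side of Theorem~\ref{thm:HB} is at most $\sum_{k=0}^{s}\binom nk-\binom{n}{s-1}=\sum_{i=0}^{s-2}\binom ni+\binom ns$, since even if $f_0,\dots,f_{s-2}$ were all positive they would contribute at most $\sum_{i=0}^{s-2}\binom ni$, and the $k=s-1$ term is excluded. Thus the whole argument reduces to computing the two top Krawtchouk coefficients $f_s$ and $f_{s-1}$ of the polynomial $f(t)=\prod_i\frac{d_i-t}{d_i}$ and reading off their signs.

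For that I would first record the leading terms of the Krawtchouk polynomials. From \eqref{eq:Krawtchouk} one checks that $\phi_k$ has degree $k$ and
$$\phi_k(t)=\frac{(-2)^k}{k!}\Big(t^k-\frac{kn}{2}\,t^{k-1}+\cdots\Big),$$
so its leading coefficient is $(-2)^k/k!$ and its $t^{k-1}$-coefficient is $\frac{(-2)^k}{k!}\cdot\big(-\frac{kn}{2}\big)$. These are obtained by extracting the top two powers of $t$ from each summand $(-1)^j\binom tj\binom{n-t}{k-j}$ and summing over $j$: the identity $\sum_j\binom kj=2^k$ handles the leading term, and the identities $\sum_j\binom kj\,j=k2^{k-1}$ together with the cancellation $\sum_j\binom kj\big(\binom j2-\binom{k-j}2\big)=0$ handle the next one. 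On the other side, writing $\prod_i(d_i-t)=\sum_{k}(-1)^k e_{s-k}(d)\,t^k$ in the elementary symmetric functions $e_j(d)$ of the distances shows that $f(t)$ has $t^s$-coefficient $(-1)^s/\prod_i d_i$ and $t^{s-1}$-coefficient $(-1)^{s-1}(\sum_i d_i)/\prod_i d_i$.

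Matching the coefficients of $t^s$ and $t^{s-1}$ on the two sides of $f=\sum_k f_k\phi_k$ then pins down the quantities I need. The degree-$s$ match gives $f_s=\frac{s!}{2^s\prod_i d_i}>0$ immediately, so the $k=s$ term is present. Substituting this value of $f_s$ into the degree-$(s-1)$ identity and solving for $f_{s-1}$ yields, after the elementary symmetric and Krawtchouk coefficients are combined,
$$f_{s-1}=\frac{(s-1)!}{2^{s-1}\prod_i d_i}\Big(\sum_i d_i-\frac{sn}{2}\Big).$$
Since the prefactor is strictly positive (the distances are positive), the sign of $f_{s-1}$ equals the sign of $\sum_i d_i-\tfrac12 sn$; the hypothesis $\sum_i d_i\le\tfrac12 sn$ therefore forces $f_{s-1}\le0$, completing the reduction set up in the first paragraph.

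The genuinely delicate step is the determination of the subleading coefficient of $\phi_k$: the leading term is standard, but isolating the $t^{k-1}$ contribution requires keeping the second-order terms of both $\binom tj$ and $\binom{n-t}{k-j}$ and checking that the $\binom j2$ and $\binom{k-j}2$ cross terms cancel, leaving the clean value $-kn/2$. Once that coefficient is in hand, everything else is bookkeeping, and the threshold $\sum_i d_i\le\frac12 sn$ emerges naturally as exactly the point at which $f_{s-1}$ changes sign.
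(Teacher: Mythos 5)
Your proposal is correct and follows exactly the route the paper indicates for this theorem: apply the harmonic bound of Theorem~\ref{thm:HB} and show that the hypothesis $\sum_i d_i\le\tfrac12 sn$ forces $f_{s-1}\le 0$, so that $\binom{n}{s-1}$ drops out of the sum. Your computation of the top two Krawtchouk coefficients (leading coefficient $(-2)^k/k!$, subleading term $-kn/2$ via the cancellation of the $\binom j2$ and $\binom{k-j}2$ contributions) and the resulting formula $f_{s-1}=\frac{(s-1)!}{2^{s-1}\prod_i d_i}\bigl(\sum_i d_i-\tfrac{sn}{2}\bigr)$ all check out.
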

This inequality is proved by showing that the assumption $ \sum_{i = 1}^{s} d_{i} \leq \frac{1}{2}sn$ 
forces the coefficient $f_{s-1}$ of the Krawtchouk expansion to be nonpositive, and thus the term $\binom n{s-1}$ 
is missing from the sum. 

A lower bound on $A(\cH_2^n,s)$ follows by an easy construction of $s$-codes in $\cH_{2}^{n}$ which sometimes 
enables one to prove tight results.

\begin{proposition}\cite[Eq.$(3.1)$]{MN11} 
\label{prop_ham_low}
For $2s \leq n$, we have
\begin{equation*}
    A(\cH_{2}^{n}, s) \geq \sum_{i = 0}^{\lfloor \frac{s}{2} \rfloor} \binom{n}{s-2i}.
\end{equation*}
\end{proposition}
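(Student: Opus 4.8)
The plan is to prove this lower bound by exhibiting an explicit $s$-distance code of the claimed cardinality. Since $A(\cH_2^n,s)$ is the maximum size of a set realizing \emph{exactly} $s$ distances, it suffices to construct one such set of size $\sum_{i=0}^{\lfloor s/2\rfloor}\binom{n}{s-2i}$. The natural candidate is the set of all binary vectors whose weight is at most $s$ and has the same parity as $s$; that is,
$$
C=\{x\in\{0,1\}^n : \mathrm{wt}(x)\le s,\ \mathrm{wt}(x)\equiv s \pmod 2\},
$$
whose weights range over $s,s-2,s-4,\dots$ down to $0$ (if $s$ is even) or $1$ (if $s$ is odd).

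Counting $|C|$ is immediate: grouping the codewords by weight and writing $\mathrm{wt}(x)=s-2i$, the size is exactly $\sum_{i=0}^{\lfloor s/2\rfloor}\binom{n}{s-2i}$, matching the desired bound. The content of the proof is therefore to check that $C$ has exactly $s$ distinct distances. First I would bound the distance set from above. For $x,y\in C$ one has $d_H(x,y)=\mathrm{wt}(x)+\mathrm{wt}(y)-2|\mathrm{supp}(x)\cap\mathrm{supp}(y)|$; since $\mathrm{wt}(x)$ and $\mathrm{wt}(y)$ share the parity of $s$, every such distance is even, and since both weights are at most $s$ we get $0<d_H(x,y)\le 2s$ for distinct $x,y$. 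Hence $\sD(C)\subseteq\{2,4,\dots,2s\}$, a set of $s$ values.

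It remains to verify that each of these $s$ values is actually attained, which is where the hypothesis $2s\le n$ enters. Restricting attention to the weight-$s$ vectors, two such vectors whose supports meet in exactly $s-j$ coordinates lie at distance $2j$; such a pair of $s$-subsets of an $n$-set exists precisely when their union, of size $2s-(s-j)=s+j$, fits, i.e. when $s+j\le n$, which holds for all $1\le j\le s$ since $s+j\le 2s\le n$. Taking $j=1,\dots,s$ produces all the distances $2,4,\dots,2s$, so $\sD(C)=\{2,4,\dots,2s\}$ and $|\sD(C)|=s$ exactly. Therefore $C$ is a valid $s$-code and $A(\cH_2^n,s)\ge|C|=\sum_{i=0}^{\lfloor s/2\rfloor}\binom{n}{s-2i}$.

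The construction and the size computation are routine; the only point requiring care is the bookkeeping that pins the distance set down to \emph{exactly} $\{2,4,\dots,2s\}$ rather than to a proper subset, since the definition of $A(\cH_2^n,s)$ demands precisely $s$ distances. This is exactly the step that forces the hypothesis $2s\le n$: if $2s>n$, the largest distance $2s$ (requiring disjoint weight-$s$ supports) could no longer be realized and the distance count would drop. Beyond this verification I do not anticipate any genuine obstacle.
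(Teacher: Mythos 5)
Your proposal is correct and uses exactly the same construction as the paper: the set of all binary vectors of weight at most $s$ with weight congruent to $s \pmod 2$, which the paper states (without elaboration) is an $s$-code with distance set $\{2,4,\dots,2s\}$. Your added verification --- that all distances are even and at most $2s$, and that each value $2j$ is realized by weight-$s$ vectors with supports meeting in $s-j$ points, which is where $2s\le n$ is used --- simply fills in the details the paper leaves implicit.
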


\begin{proof}
For  $2s \leq n$,  the set $C$ of all binary vectors of Hamming weight  $k\leq s$ such that 
$k=s~(\text{mod}\, 2)$ is an $s$-code with distance set $\sD(C)=\{2, 4, \dots, 2s\}$.
\end{proof}

\subsection{The Johnson space} The results in the previous section have their analogs for the 
Johnson space $\cJ^{n, w}.$ 
In this case the relevant family of orthogonal polynomials are the {\em Hahn polynomials,} defined as
   \begin{equation}\label{eq:Hahn}
    \psi_{k}(x) = \sum_{j = 0}^{k} (-1)^{j}\frac{\binom{k}{j} \binom{n+1-k}{j}}{\binom{w}{j} \binom{n-w}{j}} \binom{x}{j},\quad k=0,1,\dots,w.
   \end{equation}
The Hahn polynomials form an orthogonal basis in the function space $f:\{0,1,\dots,w\}\to {\mathbb R}$ with weight
$\binom wi\binom{n-w}i,i=0,1,\dots,w.$

The (improved) harmonic bound for the Johnson space has the following form. 
\begin{theorem}\cite{NS10}\label{thm:JNS}
Let $C \subset \cJ^{n, w}$ be a constant weight $s$-code with distances $\{d_{1}, \dots, d_{s}\}$. Consider the polynomial $f(t) = \prod_{i} \frac{d_{i}-t}{d_{i}}$. Suppose that the Hahn expansion of $f(t)$ is   $f(t) = \sum_{k = 0}^{s}f_{k}\psi_k(t)$. Then
    $$
|C| \leq \sum_{k: f_{k} > 0} \binom n{k-1}\frac{n-2k+1}k.
    $$
\end{theorem}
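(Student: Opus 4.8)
The plan is to realize this bound as a rank inequality for a family of positive semidefinite matrices built from the primitive idempotents of the Johnson association scheme $J(n,w)$, in the spirit of Delsarte--Goethals--Seidel but with the refinement of~\cite{NS10}. Recall that $J(n,w)$ is a $Q$-polynomial scheme with $w$ classes whose $k$-th eigenspace has multiplicity $m_k=\binom nk-\binom n{k-1}$, and whose $k$-th primitive idempotent $E_k$ is a positive semidefinite matrix of rank $m_k$ with entries depending only on the Johnson distance: $(E_k)_{x,y}=\frac{m_k}{|\cJ^{n,w}|}\,\psi_k(d_J(x,y))$, where $\psi_k$ is the normalized Hahn polynomial of \eqref{eq:Hahn} (so that $\psi_k(0)=1$). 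First I would record the two elementary facts that drive the statement: the identity $m_k=\binom nk-\binom n{k-1}=\binom{n}{k-1}\frac{n-2k+1}{k}$, and the values $f(0)=\prod_i\frac{d_i}{d_i}=1$ and $f(d_j)=0$ for every $j$.

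Next I would restrict everything to the code. For each $k$ let $G_k$ be the principal $|C|\times|C|$ submatrix of $\big(\psi_k(d_J(x,y))\big)_{x,y}=\frac{|\cJ^{n,w}|}{m_k}E_k$ indexed by $C\times C$. As a principal submatrix of a positive semidefinite matrix, $G_k\succeq 0$; and since the rank of any submatrix is at most the rank of the ambient matrix, $\operatorname{rank}G_k\le\operatorname{rank}E_k=m_k$. Now evaluate the combination $\sum_{k=0}^s f_kG_k$ entrywise: its $(x,y)$ entry is $\sum_k f_k\psi_k(d_J(x,y))=f(d_J(x,y))$, which equals $f(0)=1$ when $x=y$ and equals $0$ when $x\ne y$, because every nonzero distance realized in $C$ lies in $\{d_1,\dots,d_s\}$. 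This yields the exact matrix identity $\sum_{k=0}^s f_kG_k=I_{|C|}$.

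The key step is then a sign-splitting argument. Set $P=\sum_{k:\,f_k>0}f_kG_k$ and $N=\sum_{k:\,f_k<0}|f_k|G_k$, both positive semidefinite since each $G_k\succeq0$. The identity reads $P-N=I$, so $P=I+N\succeq I\succ 0$; thus $P$ is positive definite and $\operatorname{rank}P=|C|$. On the other hand, subadditivity of rank gives $\operatorname{rank}P\le\sum_{k:\,f_k>0}\operatorname{rank}(f_kG_k)=\sum_{k:\,f_k>0}\operatorname{rank}G_k\le\sum_{k:\,f_k>0}m_k$. Combining the two estimates gives $|C|\le\sum_{k:\,f_k>0}m_k=\sum_{k:\,f_k>0}\binom{n}{k-1}\frac{n-2k+1}{k}$, as claimed. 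Note that this is strictly sharper than the cruder bound $|C|\le\sum_{k:\,f_k\ne0}m_k$ that mere subadditivity applied to $\sum_k f_kG_k=I$ would produce: moving the negative-coefficient terms to the other side and invoking positive-semidefiniteness is exactly what discards them.

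The main obstacle is the structural input of the first paragraph, namely that $\psi_k(d_J(x,y))$ is, up to the positive scalar $m_k/|\cJ^{n,w}|$, the $(x,y)$ entry of a genuine orthogonal projection of rank $m_k$. Over the sphere this is proved in~\cite{NS10} through addition formulas for Gegenbauer polynomials; for $\cJ^{n,w}$ it is the corresponding statement that the Hahn polynomials are the dual eigenvalues of the $Q$-polynomial scheme $J(n,w)$ and that its primitive idempotents are positive semidefinite of the stated ranks. Establishing (or citing) this semidefiniteness together with the multiplicity count $m_k=\binom nk-\binom n{k-1}$ is the crux; once it is in hand, the identity $\sum_k f_kG_k=I$ and the elementary rank/PSD manipulation finish the proof with no further computation. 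This is precisely the ``standard tools'' extension of~\cite{NS10} to $\cJ^{n,w}$ mentioned after Theorem~\ref{thm:HB}.
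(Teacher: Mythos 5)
Your proof is correct and is precisely the ``standard tools'' extension that the paper invokes: the paper gives no proof of Theorem~\ref{thm:JNS}, citing \cite{NS10} for the spherical case, and your sign-splitting rank argument using the primitive idempotents of the Johnson scheme --- restricting $\frac{\binom nw}{m_k}E_k$ to $C\times C$, obtaining $\sum_k f_kG_k=I$, and discarding the negative-coefficient terms via $P=I+N\succeq I$ --- is exactly the intended adaptation, with the multiplicities $m_k=\binom nk-\binom n{k-1}=\binom n{k-1}\frac{n-2k+1}{k}$ accounting for the stated binomial expression. The only cosmetic caveat is the $k=0$ term, where the closed form $\binom n{k-1}\frac{n-2k+1}{k}$ degenerates and should be read as $m_0=1$, consistent with your $\sum_{k:f_k>0}m_k$ formulation.
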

As remarked, this result appears in \cite{NS10} for the case of spherical codes. Starting with this theorem, one can prove 
the following upper bound on $s$-codes in the Johnson space.

\begin{theorem}\cite[Theorem $8$]{BM11}
\label{thm:Jon_BM_upp}
Let $C \subset \cJ^{n, w}$ be an $s$-code with distances $\sD=\{d_{1}, \dots, d_{s}\}$. Suppose that
\begin{equation*}
    \sum_{i = 1}^{s} (w-d_{i}) \geq \frac{s(w^{2}-(s-1)(2w-\frac{n}{2}))}{n-2(s-1)}.
\end{equation*}
Then
\begin{equation*}
    |C| \leq \binom{n}{s} - \binom{n}{s-1} \frac{n-2s+3}{n-s+2}.
\end{equation*}
\end{theorem}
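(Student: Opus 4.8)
The plan is to imitate the Hamming-space argument behind Theorem~\ref{thm:ham_up}, working instead from the Johnson harmonic bound of Theorem~\ref{thm:JNS}. The starting point is the identity
\[
\binom{n}{k-1}\frac{n-2k+1}{k}=\binom nk-\binom n{k-1}\qquad(\text{with }\tbinom n{-1}:=0),
\]
which is a one-line check and identifies each summand of the harmonic bound with the multiplicity of the $k$-th eigenspace of the Johnson scheme. Consequently, if all Hahn coefficients $f_0,\dots,f_s$ were positive the bound would telescope to $\sum_{k=0}^s\big(\binom nk-\binom n{k-1}\big)=\binom ns$. Because each term $\binom nk-\binom n{k-1}$ is nonnegative throughout $0\le k\le s$ (this uses $n-2(s-1)>0$, so that $n-2k+1\ge0$), discarding any index with $f_k\le0$ only decreases the sum while still bounding $|C|$ from above. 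Thus it is enough to prove that the hypothesis forces $f_{s-1}\le0$: deleting the index $s-1$ removes the term $\binom n{s-1}-\binom n{s-2}$, which by the displayed identity equals $\binom n{s-1}\frac{n-2s+3}{n-s+2}$, yielding precisely the claimed bound.

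To isolate $f_{s-1}$ I would use $\deg\psi_k=k$, so that writing $f(t)=\sum_{k=0}^s f_k\psi_k(t)$ and matching the coefficients of $t^s$ and $t^{s-1}$ gives the triangular system
\[
f_s\lambda_s=[t^s]f,\qquad f_s\mu_s+f_{s-1}\lambda_{s-1}=[t^{s-1}]f,
\]
where $\lambda_k,\mu_k$ are the leading and subleading coefficients of $\psi_k$, read off from \eqref{eq:Hahn}. Applying Vieta to $f(t)=(\prod_i d_i)^{-1}\prod_i(d_i-t)$ gives $[t^s]f=(-1)^s/\prod_i d_i$ and $[t^{s-1}]f=(-1)^{s-1}(\sum_i d_i)/\prod_i d_i$. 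In particular $f_s=[t^s]f/\lambda_s>0$ (every binomial factor is positive), so the $k=s$ term always survives. Solving for $f_{s-1}$ and tracking the sign $(-1)^{s-1}$ of $\lambda_{s-1}$, the inequality $f_{s-1}\le0$ collapses, after the common factor $\prod_i d_i$ cancels, to the scalar condition $\sum_i d_i\le -\mu_s/\lambda_s$.

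The main obstacle is the purely computational evaluation of $\mu_s$, the $t^{s-1}$ coefficient of $\psi_s$, which collects contributions from both the $j=s$ and $j=s-1$ terms of \eqref{eq:Hahn} (through the subleading term of $\binom xs$ and the leading term of $\binom x{s-1}$), followed by the simplification of the ensuing ratio of the binomials $\binom{n+1-s}{s}$, $\binom{w}{s}$, $\binom{n-w}{s}$ and their index-shifts. Carrying this out yields
\[
-\frac{\mu_s}{\lambda_s}=\binom s2+\frac{s(w-s+1)(n-w-s+1)}{n-2s+2},
\]
so that $f_{s-1}\le0$ becomes linear in $\sum_i d_i$, equivalently in $\sum_i(w-d_i)=sw-\sum_i d_i$. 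A final algebraic reduction, combining this expression over the common denominator $n-2(s-1)$, produces exactly the threshold $\frac{s(w^2-(s-1)(2w-\frac n2))}{n-2(s-1)}$; here the hypothesis $n>2(s-1)$ both renders the denominator positive and legitimizes clearing it without reversing the inequality. I would package this last simplification as a self-contained lemma so that the binomial bookkeeping does not clutter the main line of argument.
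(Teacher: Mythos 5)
Your proposal is correct and follows essentially the same route as the proof the paper attributes to \cite{BM11}: apply the harmonic bound of Theorem~\ref{thm:JNS} and show that the hypothesis forces the Hahn coefficient $f_{s-1}\le 0$, so that the term $\binom{n}{s-1}-\binom{n}{s-2}=\binom{n}{s-1}\frac{n-2s+3}{n-s+2}$ drops from the telescoping sum $\sum_{k=0}^{s}\bigl(\binom{n}{k}-\binom{n}{k-1}\bigr)=\binom{n}{s}$ --- exactly the strategy the paper sketches after Theorem~\ref{thm:ham_up} for the Hamming analog. Your computed threshold $-\mu_s/\lambda_s=\binom{s}{2}+\frac{s(w-s+1)(n-w-s+1)}{n-2s+2}$ checks out and does simplify to $sw-\frac{s(w^{2}-(s-1)(2w-\frac{n}{2}))}{n-2(s-1)}$, so the argument is sound as well as faithful to the original.
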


Similarly to Proposition~\ref{prop_ham_low}, there is a general construction of $s$-codes in the Johnson space that implies a lower bound on $A(\cJ^{n, w}, s)$.
\begin{proposition}\cite{EKR61} 
\label{prop:MN_J2}
For $s \leq n-w$ we have
\begin{equation*}
    A(\cJ^{n, w}, s) \geq \binom{n-w+s}{s}.
\end{equation*}
\end{proposition}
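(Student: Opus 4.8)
The plan is to exhibit an explicit family of $\binom{n-w+s}{s}$ codewords in $\cJ^{n,w}$ whose distance set has size exactly $s$. Viewing codewords as $w$-subsets of $[n]=\{1,\dots,n\}$, I would fix a $(w-s)$-subset $F\subseteq[n]$ — this is legitimate since $w-s\ge 0$ — and take
$$
C=\{x\in\cJ^{n,w}: F\subseteq x\}.
$$
Each $x\in C$ is determined by the $s$-subset $x\setminus F$ of the $n-(w-s)=n-w+s$ remaining coordinates, so $|C|=\binom{n-w+s}{s}$, matching the claimed bound.

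The next step is to control the distances. For $x=F\cup A$ and $y=F\cup B$ in $C$, where $A,B$ are $s$-subsets of $[n]\setminus F$, one has $x\cap y=F\cup(A\cap B)$, hence $|x\cap y|=(w-s)+|A\cap B|$. Since $|A|=|B|=s$, the quantity $|A\cap B|$ ranges over $\{0,1,\dots,s\}$ and equals $s$ precisely when $x=y$. Thus for distinct $x,y$ we get $|x\cap y|\in\{w-s,\dots,w-1\}$, i.e., $d_J(x,y)=w-|x\cap y|\in\{1,2,\dots,s\}$, so $C$ has at most $s$ distinct distances.

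Finally I would verify that all $s$ distances actually occur, which is where the hypothesis $s\le n-w$ enters: it guarantees that the ambient set $[n]\setminus F$ has $n-w+s\ge 2s$ elements, so one can choose $s$-subsets $A,B$ realizing any prescribed intersection size in $\{0,1,\dots,s-1\}$ (in particular, two disjoint such subsets exist). Hence $\sD(C)=\{1,\dots,s\}$ and $C$ is a genuine $s$-code, giving $A(\cJ^{n,w},s)\ge\binom{n-w+s}{s}$. The construction is elementary; the only point requiring any care is this last verification, since the inequality $s\le n-w$ is exactly what forces the distance set to attain its full size $s$ rather than collapse to fewer values.
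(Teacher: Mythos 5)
Your proof is correct and is essentially the paper's own argument: the paper likewise takes all weight-$w$ vectors with ones in a fixed set of $w-s$ coordinates, noting this gives $\binom{n-w+s}{s}$ codewords with distance set $\{1,\dots,s\}$. Your only addition is spelling out the verification (which the paper dismisses with ``clearly'') that $s\le n-w$ ensures all $s$ distances are actually attained, which is a worthwhile detail but not a different method.
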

\begin{proof}
For $s \leq n-w$,  consider the set of binary vectors in $\cJ^{n,w}$ with ones in the first $w-s$ coordinates. 
Clearly, it forms an $s$-code of size $\binom{n-w+s}{s}$ with distances $\{1,\dots,s\}$.
\end{proof}

\section{The maximum size of \texorpdfstring{$2$}--codes in the Hamming space: A proof of Theorem \ref{thm:Hamming2}}
We will prove Theorem \ref{thm:Hamming2} by reduction to spherical codes.
Let $C\subset \cH_2^n$ be a binary code of length $n.$ Our plan is to map $C$ on the sphere $S^{n-1}$ and view the result as a spherical $2$-distance code. 
For that let $f:\cH_2^n\to S^{n-1}$ be a map given by $x = (x_{1}, \dots, x_{n}) \mapsto f(x) = (v_{1}, \dots, v_{n})$, where $v_{i} = \frac{1}{\sqrt{n}}(-1)^{x_{i}}$ for all $i$. Note that if $d_H(x,y)=d$ then for the Euclidean inner product we obtain
$\langle f(x), f(y) \rangle=1-\frac{2d}{n}$. Thus, if $C$ is a binary 2-code with distances $a$ and $b$, the set $f(C)$ forms 
a spherical $2$-code with inner products $\alpha = 1-\frac{2a}{n}$, $\beta = 1-\frac{2b}{n}$.

In the following subsection we recall some known bounds for equiangular lines. The proof itself is formed of two parts presented in Sections~\ref{sec:EL}, \ref{sec:EC}. The first part reformulates the problem for spherical codes,
connects it to families of equiangular lines, and then narrows down the possible parameters by using the known results about them. This leaves behind a set of exceptional values of the distances, which we handle in the second part. 

\subsection{Bounds from spherical equiangular sets} \label{sec:EL}

We will need a few known results concerning equiangular line sets in $\reals^n.$ Any such set is a finite collection of points
on $S^{n-1}$ with pairwise inner products $\{\alpha, -\alpha\}$, $\alpha\in[0,1)$. Let $M(n)$ be the size of the largest possible equiangular line set in $n$ dimensions and let $M_{\alpha} (n)$ be the size of the largest equiangular set with inner products $\{\alpha, -\alpha\}$.

\vspace*{.1in}
\begin{theorem}\label{thm:M_alpha}
\hspace*{-.2in}\begin{align*}
 \hspace*{-.2in}  (a)\; & M_{\alpha}(n)  \leq 2n  \text{ unless $1/\alpha$ is an odd integer}  \text{ (Neumann, see \cite{LS73})}; \\[.1in]
   (b) \;& M_{\frac 1 3}(n) = 28 \text{ for } 7 \leq n \leq 15, \text{ and } M_{\frac 1 3}(n) = 2n-2 \text{ for } n \geq 15.\text{ (\cite{LS73})};\\[.1in]
  (c)\; &M_{\alpha}(n)  \leq \frac {n(1-\alpha^2)}{1-n\alpha^2}  \text{ for all $\alpha$ and $n \in \mathbb{N}$ such that $n\alpha^2 < 1$}  \text{ (Relative bound, \cite{LS73})};\\[.1in]
   (d) \; & M_{\frac 1 a}(n) \leq  \frac {(a^2-2)(a^2-1)} 2 \text{ for all $n$ and $a$ such that $n \leq 3a^2-16$ and $a \geq 3$ }
   \text{(\cite{Yu17})};\\[.1in]
  (e) \; &M_{\frac 1 a}(n)  \leq n\left(\frac 2 3 a^2 + \frac {4} {7}\right)+2  \text{ for all $a\geq 3$ and for all $n \in \mathbb{N}$ (\cite[Theorem 3]{GY18}).}
\end{align*}
\end{theorem}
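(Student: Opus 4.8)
Since this theorem collects five bounds on equiangular line sets that are quoted from \cite{LS73,Yu17,GY18} rather than established here, strictly speaking the ``proof'' is a pointer to those papers; nonetheless, here is how I would reconstruct each part. The unifying object is the Gram matrix. Given $N$ equiangular lines in $\reals^n$ at common angle $\arccos\alpha$, pick a unit vector $v_i$ on each line; then the Gram matrix $G=(\langle v_i,v_j\rangle)_{i,j}$ equals $I+\alpha S$, where $S$ is a symmetric integer matrix with zero diagonal and off-diagonal entries in $\{+1,-1\}$ (the Seidel matrix). Since $G$ is positive semidefinite of rank at most $n$, the least eigenvalue of $S$ is at least $-1/\alpha$, and $-1/\alpha$ occurs as an eigenvalue of $S$ with multiplicity at least $N-n$. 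Every bound below is extracted from this single picture.

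For part (c), the relative bound, I would use the second-moment method. Associate to each line the rank-one projector $P_i=v_iv_i^{\top}$, an element of the space of symmetric matrices equipped with the trace inner product, so that $\operatorname{tr}(P_i^2)=1$ and $\operatorname{tr}(P_iP_j)=\langle v_i,v_j\rangle^2=\alpha^2$ for $i\neq j$. Setting $Q=\sum_i P_i$ and using $\operatorname{tr}(Q^2)\ge(\operatorname{tr}Q)^2/n$ (Cauchy--Schwarz, since $Q$ has rank at most $n$), I obtain
\[
N+N(N-1)\alpha^2=\operatorname{tr}(Q^2)\ge\frac{N^2}{n},
\]
and dividing by $N$ and solving under the hypothesis $n\alpha^2<1$ yields $N\le n(1-\alpha^2)/(1-n\alpha^2)$.

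For parts (a) and (b) I would argue directly with the integer matrix $S$. For (a), Neumann's observation is that if $N>2n$ then $-1/\alpha$ has multiplicity at least $N-n>n$ as an eigenvalue of $S$; were $1/\alpha$ irrational of degree $d\ge2$ over $\mathbb{Q}$, each of its $d$ Galois conjugates would be an eigenvalue of the integer matrix $S$ with the same multiplicity, forcing $N\ge d(N-n)\ge2(N-n)$, i.e.\ $N\le2n$, a contradiction. Hence $1/\alpha$ is a rational algebraic integer, and a separate parity argument modulo $2$ on the $\pm1$ matrix $S$ shows it must be odd. Part (b) is the sharp Lemmens--Seidel analysis at $\alpha=1/3$: the value $28$ in low dimensions comes from the maximal configuration inside the relevant root system, while the asymptotic $2n-2$ follows from a structural ``switching'' argument on the Seidel matrix that rules out anything larger once $n\ge15$. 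I would simply cite \cite{LS73} here rather than reproduce the case analysis.

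The genuinely hard parts are (d) and (e), and I expect them to be the main obstacle, since both go well beyond the single Gram-matrix inequality. For (d) I would follow Yu's pillar decomposition: fix a base pair of lines at angle $\arccos(1/a)$, partition the remaining lines according to their sign pattern against this base, and bound both the number of pillars and the number of lines per pillar by combinatorial counting, which in the regime $n\le3a^2-16$ collapses to $(a^2-2)(a^2-1)/2$. For (e) I would invoke the polynomial/semidefinite-programming method of \cite{GY18}: choose an auxiliary polynomial that is positive semidefinite on $[-1,1]$ yet nonpositive at $\pm1/a$, and feed the two-point structure into the dual program to extract an upper bound linear in $n$. Forcing the constants $\tfrac23$ and $\tfrac47$ to come out exactly is the delicate point, and it is precisely this optimization that the cited theorem carries out.
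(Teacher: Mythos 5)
The paper offers no proof of this theorem at all: it is a compilation of known results quoted with citations to \cite{LS73}, \cite{Yu17}, and \cite{GY18} (the surrounding text merely remarks that (a)--(c) are classical and (d)--(e) recent), so your citation-first stance is exactly the paper's own ``proof,'' and your supplementary reconstructions of (a) and (c) are the correct standard arguments (the Galois-conjugate multiplicity count plus the mod-$2$ rank argument for Neumann's theorem, and the Cauchy--Schwarz/second-moment computation on $\operatorname{tr}(Q^2)$ for the relative bound). One caveat on an optional sketch: part (d) is proved in \cite{Yu17} by the positive-definite-function (LP/polynomial) method announced in that paper's title, not by the pillar decomposition, which belongs to the Lemmens--Seidel analysis of $\alpha=1/5$ and its descendants---this misattribution does not damage your proposal, since the statement rests on the citation rather than the sketch.
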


Parts (a)-(c) of this theorem form classical results while parts (d) and (e) represent recent results on bounds for equiangular lines.

\vspace*{.1in}
\begin{lemma}
\label{lem_equ}
$M_{\frac 1 a}(n) \leq \binom{n-1}{2}$ for $n\geq 7$ unless $a$ is an odd integer and $n \in \{a^{2}-1, a^{2}-2\}$.
\end{lemma}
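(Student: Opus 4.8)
The plan is to split on the value of the parameter $a$ (where $\alpha=1/a$) and invoke the appropriate estimate from Theorem~\ref{thm:M_alpha} in each regime, checking each time that the resulting bound is dominated by $\binom{n-1}{2}=\frac{(n-1)(n-2)}{2}$. The case split is organized so that the exceptional values fall out exactly where no usable bound survives.

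First, if $a$ is \emph{not} an odd integer, Theorem~\ref{thm:M_alpha}(a) gives $M_{1/a}(n)\le 2n$, and $2n\le\binom{n-1}{2}$ is equivalent to $n^2-7n+2\ge 0$, which holds for every $n\ge 7$. This settles all non-exceptional $a$ at once, and it explains why the exceptional set is confined to odd $a$: part~(a) is precisely the tool that disappears when $a$ is odd. For $a=3$ I would read the answer directly off the exact values in Theorem~\ref{thm:M_alpha}(b): since $M_{1/3}(n)=28$ for $7\le n\le 15$ and $M_{1/3}(n)=2n-2$ for $n\ge 15$, the inequality $\binom{n-1}{2}\ge 28$ fails exactly at $n=7,8$ (which are $a^2-2$ and $a^2-1$) and holds for all $n\ge 9$, while the tail $2n-2\le\binom{n-1}{2}$ reduces to $(n-1)(n-6)\ge 0$.

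The substantive case is $a\ge 5$ odd, which I would treat by cutting the range of $n$ at $a^2$. For $n\ge a^2$ I would cover $a^2\le n\le 3a^2-16$ using the absolute bound Theorem~\ref{thm:M_alpha}(d): since $(x-1)(x-2)$ is increasing, $n\ge a^2$ yields $M_{1/a}(n)\le\frac{(a^2-1)(a^2-2)}{2}\le\binom{n-1}{2}$, with equality exactly at $n=a^2$. For the remaining large $n$ I would apply the linear bound Theorem~\ref{thm:M_alpha}(e); because $\binom{n-1}{2}$ is quadratic in $n$ while this bound is linear, it suffices to locate the single crossing point and verify it lies below $3a^2-16$, so that (d) and (e) overlap and leave no gap above $a^2$. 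A short estimate puts the crossing point near $\tfrac{4}{3}a^2$, comfortably under $3a^2-16$ once $a\ge 5$.

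For $7\le n\le a^2-3$ I would use the relative bound Theorem~\ref{thm:M_alpha}(c), valid since $n<a^2$: clearing denominators reduces the target inequality $\frac{n(a^2-1)}{a^2-n}\le\binom{n-1}{2}$ to $F(n)\ge 0$, where $F(n)=a^2(n^2-5n+2)-n^2(n-3)$. The one genuinely delicate step — and the one I expect to be the main obstacle — is proving $F\ge 0$ \emph{uniformly} on the whole interval, not merely at its endpoints. I would do this by viewing $F$ as a cubic in $n$ with negative leading coefficient whose two critical points sit near $n\approx\tfrac{5}{2}$ and $n\approx\tfrac{2}{3}a^2$; on $[7,\,a^2-3]$ the function therefore increases and then decreases, so its minimum occurs at an endpoint. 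Evaluating gives $F(7)=16a^2-196$ and $F(a^2-3)=a^4-19a^2+54$, both positive for $a^2\ge 25$, which closes the interval. The two values $n=a^2-2$ and $n=a^2-1$ belong to none of the covered ranges — this is exactly where the relative bound blows up and no replacement is at hand — and they are precisely the stated exceptions, completing the argument.
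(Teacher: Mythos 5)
Your proof is correct, and its skeleton coincides with the paper's: both split off non-odd $a$ via Theorem~\ref{thm:M_alpha}(a), cover $a^2\le n\le 3a^2-16$ with the absolute bound (d), cover $n\le a^2-3$ with the relative bound (c), cover large $n$ with the linear bound (e), and observe that exactly $n\in\{a^2-2,a^2-1\}$ falls into no range. The divergence is in how the two delicate cases are verified, and there your route is genuinely different. For $n\le a^2-3$ the paper avoids any calculus: it writes $M_{1/a}(n)\le n+\frac{n^2-n}{a^2-n}$ and simply bounds the denominator, using $a^2-n\ge 3$ (which settles $n\ge 13$) and, for $7\le n\le 12$, the forced inequality $a\ge 5$ hence $a^2-n\ge 13$. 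Your uniform cubic analysis is heavier but exact: I verified the reduction to $F(n)=a^2(n^2-5n+2)-n^2(n-3)\ge 0$ and both endpoint values $F(7)=16a^2-196$ and $F(a^2-3)=a^4-19a^2+54$; the only step you left implicit is that the lower critical point of $F$ actually lies below $7$, which one should confirm (e.g.\ via $F'(7)=9a^2-105>0$ for $a\ge 5$), since otherwise the minimum on $[7,a^2-3]$ need not sit at an endpoint. For large $n$, you localize $a=3$ at the outset using the exact values in part (b), so your part-(e) argument may assume $a\ge 5$ and only needs the crossing point near $\frac{4}{3}a^2$ to lie below $3a^2-16$, which makes ranges (d) and (e) overlap; the paper instead runs (e) for all odd $a$ with $n\ge 3a^2-15$, substitutes $a^2\le\frac{n+15}{3}$ to get an inequality in $n$ alone valid for $n\ge 20$, and handles the residual window $12\le n\le 19$ (where necessarily $a=3$) by part (b). Both arrangements are sound; yours buys a cleaner case structure and exact endpoint constants, the paper's buys shorter, calculation-free estimates.
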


\begin{proof}
The case-by-case proof below essentially follows the proof of \cite[Theorem 2]{GY18}.
\begin{enumerate}
\item
If $a$ is not an odd integer, by Theorem \ref{thm:M_alpha}(a), $M_{\frac 1 a}(n) \leq 2n \leq \binom{n-1}{2}$ for all $n\geq 7$. Therefore,
below we
will assume that $a$ is an odd integer.
\item
Let $a^{2} \leq n \leq 3a^{2}-16.$ By Theorem \ref{thm:M_alpha}(d), 
\begin{equation*}
    M_{\frac 1 a}(n) \leq \frac{(a^{2}-1)(a^{2}-2)}{2} \leq \binom{n-1}{2}.
\end{equation*}
\item
For $n \leq a^{2}-3$, by Theorem \ref{thm:M_alpha}(c),
\begin{equation*}
    M_{\frac 1 a}(n) \leq \frac{n(a^{2}-1)}{a^{2}-n} = n + \frac{n^{2}-n}{a^{2}-n} \leq n+\frac{n^{2}-n}{3}\leq \binom{n-1}{2}
\end{equation*}
for all $n\geq 13$. If $7 \leq n \leq 12$, then $a \geq 5$ (recall that $a$ is odd) and, again using Theorem \ref{thm:M_alpha}(c), we obtain
\begin{equation*}
    M_{\frac 1 a}(n) \leq n + \frac{n^{2}-n}{a^{2}-n} \leq n + \frac{n^{2}-n}{13} \leq \binom{n-1}{2}.
\end{equation*}
\item
For the last remaining case, consider $n \geq 3a^{2}-15$. By Theorem \ref{thm:M_alpha}(e),
\begin{equation*}
    M_{\frac 1 a}(n) \leq n\left(\frac{2}{3}a^{2} + \frac{4}{7}\right) + 2 \leq n \left(\frac{2}{3} \cdot \frac{n+15}{3}+\frac{4}{7} \right)+2 \leq \binom{n-1}{2}
\end{equation*}
for all $n\geq 20$. For $12 \leq n \leq 19$ we have $a = 3$, and by Theorem \ref{thm:M_alpha}(b)
\begin{equation*}
    M_{\frac 1 a}(n) = 28 \leq \binom{n-1}{2}.
\end{equation*}

\end{enumerate}
\end{proof}

Recall that $A(\cH_{2}^{n}, \{a, b\})$ is the maximum size of $2$-codes in the Hamming space with distance set $\{a, b\}$ and $a > b$, and let $g(n, \alpha, \beta)$ be the maximum size of a spherical $2$-code with inner products $\alpha, \beta \in [-1, 1)$.

\begin{lemma}
\label{lem_sph}
Suppose that $-1 \leq \alpha < \beta < 1$ and $\alpha + \beta < 0$. Let $\gamma = \frac{2-(\alpha+\beta)}{\beta - \alpha}$. Then for $n\geq 6$,
\begin{equation*}
    g(n, \alpha, \beta) \leq \binom{n}{2}
\end{equation*}
unless $\gamma$ is an odd integer and $n \in \{\gamma^{2}-2, \gamma^{2}-3\}$.
\end{lemma}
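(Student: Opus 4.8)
The plan is to reduce the spherical $2$-code to a set of equiangular lines in one higher dimension and then invoke Lemma~\ref{lem_equ}. Let $v_1,\dots,v_N\in S^{n-1}$ be unit vectors with pairwise inner products in $\{\alpha,\beta\}$, so that $N\le g(n,\alpha,\beta)$ ranges up to the maximum. I would append a single coordinate, setting $w_i=(v_i,t)\in\reals^{n+1}$ with $t=\sqrt{-(\alpha+\beta)/2}$; the hypothesis $\alpha+\beta<0$ is exactly what makes $t$ a positive real number. A direct computation gives $\langle w_i,w_j\rangle=\langle v_i,v_j\rangle+t^2$, so the two possible values $\alpha,\beta$ are shifted to $\alpha+t^2=(\alpha-\beta)/2$ and $\beta+t^2=(\beta-\alpha)/2$, which are negatives of one another. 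Since $\|w_i\|^2=1+t^2=(2-(\alpha+\beta))/2$, after normalizing the $w_i$ to unit vectors the pairwise inner products become $\pm\frac{\beta-\alpha}{\,2-(\alpha+\beta)\,}=\pm 1/\gamma$. Thus the code lifts to a collection of $N$ unit vectors in $\reals^{n+1}$ that is equiangular with angle parameter $1/\gamma$.

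Before counting, I would verify two points. First, that $1/\gamma\in(0,1)$, equivalently $\gamma>1$: from $\gamma=\frac{2-(\alpha+\beta)}{\beta-\alpha}$ one checks that $2-(\alpha+\beta)>\beta-\alpha$ reduces to $\beta<1$, which holds by hypothesis, and $\beta-\alpha>0$ as well, so indeed $\gamma>1$. Second, that the lifted vectors determine $N$ \emph{distinct} lines: since the last coordinate of every $w_i$ equals the same $t>0$, we have $w_i=\pm w_j$ only in the case $w_i=w_j$, i.e.\ $v_i=v_j$; hence distinct codewords give distinct lines. Consequently $N\le M_{1/\gamma}(n+1)$, and maximizing over all such codes yields $g(n,\alpha,\beta)\le M_{1/\gamma}(n+1)$.

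The conclusion then follows by applying Lemma~\ref{lem_equ} with $a=\gamma$ and ambient dimension $n+1$: for $n+1\ge 7$, i.e.\ $n\ge 6$, we obtain $M_{1/\gamma}(n+1)\le\binom{n}{2}$ unless $\gamma$ is an odd integer and $n+1\in\{\gamma^2-1,\gamma^2-2\}$. Translating the exceptional set back through $n+1\mapsto n$ gives precisely $n\in\{\gamma^2-2,\gamma^2-3\}$, matching the statement.

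I do not anticipate a serious obstacle here; the real content is the choice of lift that symmetrizes the two inner products into $\pm 1/\gamma$, together with the bookkeeping identifying the resulting angle with $1/\gamma$. The one place that genuinely requires care is confirming $\gamma>1$ and the distinctness of the lines, since a degenerate lift (for instance $\alpha+\beta=0$, which would force $t=0$) would collapse antipodal pairs and destroy the equiangular-line interpretation; the strict inequality $\alpha+\beta<0$ is exactly what rules this out. Everything past that is a direct application of Lemma~\ref{lem_equ}.
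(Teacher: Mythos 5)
Your proof is correct and is essentially the paper's own argument: the paper also lifts the code to an equiangular set in $S^{n}$ with inner product $\pm 1/\gamma$, via $x_i\mapsto t x_i+\sqrt{1-t^2}\,y$ with $t=\sqrt{2/(2-(\alpha+\beta))}$, which is exactly your append-a-coordinate-and-normalize construction written in different coordinates, and then applies Lemma~\ref{lem_equ} in dimension $n+1$ to get the same exceptional set $n\in\{\gamma^2-2,\gamma^2-3\}$. Your additional checks that $\gamma>1$ and that distinct codewords yield distinct lifted points are sound (the paper leaves them implicit).
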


\begin{proof}
Consider a $2$-code $C = \{x_{1}, \dots, x_{N}\}\subset S^{n-1}$ with inner products $\alpha, \beta$. Let $t = \sqrt{\frac{2}{2-(\alpha+\beta)}}$ and let $y \in S^{n}$ be the unit vector orthogonal to all $x_{i} \in C$. Then the set
\begin{equation*}
    \{tx_{i} + \sqrt{1-t^{2}}y : 1 \leq i \leq N\}
\end{equation*}
is an equiangular set in $S^{n}$ with inner product $\frac{1}{\gamma} = \frac{\beta-\alpha}{2-(\alpha+\beta)}$. Thus, 
\begin{equation*}
    g(n, \alpha, \beta) \leq M_{\frac{1}{\gamma}}(n+1) \leq \binom{n}{2}
\end{equation*}
unless $\gamma$ is one of the exceptional values in the statement of the Lemma \ref{lem_equ}.
\end{proof}

\begin{theorem}
\label{thm:ham_partial_pro}
For $n\geq 6$,
\begin{equation*}
    A(\cH_{2}^{n}, \{a, b\}) \leq 1 + \binom{n}{2}
\end{equation*}
unless there is a positive integer $m$ such that $a = (m+1)(2m+1)$, $b = m(2m+1)$ and $n \in \{(2m+1)^{2}-2, (2m+1)^{2}-3\}$.
\end{theorem}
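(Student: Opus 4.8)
The plan is to exploit the spherical embedding $f$ already introduced above: a binary $2$-code $C$ with distances $\{a,b\}$ (say $a>b$) maps bijectively to a spherical $2$-code $f(C)\subset S^{n-1}$ of the same size with inner products $\alpha=1-\frac{2a}{n}$ and $\beta=1-\frac{2b}{n}$. Since $a>b\ge 1$ and $a\le n$, we automatically have $-1\le\alpha<\beta<1$, and a direct computation gives $\alpha+\beta=2-\frac{2(a+b)}{n}$ together with, in the notation of Lemma~\ref{lem_sph},
\[
\gamma=\frac{2-(\alpha+\beta)}{\beta-\alpha}=\frac{a+b}{a-b}.
\]
In particular $\alpha+\beta<0$ is equivalent to $a+b>n$, and since $0<b<a$ we always have $\gamma>1$. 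Taking $C$ to be a largest such code, I would first record that $A(\cH_2^n,\{a,b\})=|C|=|f(C)|\le g(n,\alpha,\beta)$, so the whole problem reduces to bounding $g$.

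The argument then splits according to the sign of $\alpha+\beta$. When $a+b\le n$ I would simply invoke Theorem~\ref{thm:ham_up} with $s=2$, which yields $A(\cH_2^n,\{a,b\})\le 1+\binom n2$ with no exceptions. When $a+b>n$ I would apply Lemma~\ref{lem_sph}: it gives $g(n,\alpha,\beta)\le\binom n2\le 1+\binom n2$ \emph{unless} $\gamma$ is an odd integer and $n\in\{\gamma^2-2,\gamma^2-3\}$. Because $\gamma>1$, an odd-integer value of $\gamma$ must be $\gamma=2m+1$ for some positive integer $m$, which isolates exactly the parameter range that still needs analysis.

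In this remaining case I would translate the conditions on $\gamma$ back into conditions on $a,b$. Writing $\ell=a-b$, the identity $\frac{a+b}{a-b}=2m+1$ forces $a=(m+1)\ell$ and $b=m\ell$. The hypothesis $a+b>n$ together with $n\ge(2m+1)^2-3$ gives $(2m+1)\ell>(2m+1)^2-3$, hence $\ell>(2m+1)-\frac{3}{2m+1}\ge 2m$ (using $m\ge1$), so $\ell\ge 2m+1$. The key point is that $\ell=2m+1$ is then forced by a complementary bound: if $\ell\ge 2m+2$ then $b=m\ell\ge 2m^2+2m$, whence $2b\ge 4m^2+4m>4m^2+4m-1\ge n$, i.e.\ $\beta=1-\frac{2b}{n}<0$. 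For a set of unit vectors all of whose pairwise inner products are at most $\beta<0$, the elementary inequality $0\le\bigl\|\sum_i x_i\bigr\|^2=N+\sum_{i\ne j}\langle x_i,x_j\rangle$ gives $N\le \frac{2b}{2b-n}\le 2n\le 1+\binom n2$ for $n\ge 6$ (equivalently, the Plotkin bound applies because the minimum distance $b$ exceeds $n/2$). Hence $\ell\ge 2m+2$ can never violate the bound, and the only surviving case is $\ell=2m+1$, i.e.\ $a=(m+1)(2m+1)$, $b=m(2m+1)$ with $n\in\{(2m+1)^2-2,(2m+1)^2-3\}$, exactly as stated.

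The main obstacle is precisely this last pinning-down. The exceptional range produced by Lemma~\ref{lem_sph} a priori permits a whole interval $2m+1\le\ell\le\lfloor n/(m+1)\rfloor$ of admissible distance pairs, and one must observe that every pair with $\ell\ge 2m+2$ has minimum distance $b>n/2$ and is therefore controlled by the trivial simplex/Plotkin estimate rather than by the equiangular-line machinery. Recognizing that the two complementary bounds—Theorem~\ref{thm:ham_up} for $a+b\le n$ and the negative-inner-product bound for $2b>n$—exactly sandwich the single resistant family $\ell=2m+1$ is the crux; the remaining exceptional parameters are then handed off to the second part of the proof.
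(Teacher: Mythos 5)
Your proposal is correct and follows essentially the same route as the paper's proof: Theorem~\ref{thm:ham_up} disposes of $a+b\le n$, the spherical embedding together with Lemma~\ref{lem_sph} handles $a+b>n$ and isolates the odd-integer ratios $\gamma=2m+1$, and a negative-inner-product bound eliminates all but the pair $a=(m+1)(2m+1)$, $b=m(2m+1)$. The only (equivalent) variation is in that last step: the paper cites Rankin's theorem to reduce to $\beta\ge 0$, pins $k=\gcd(a,b)\in\{2m,2m+1\}$, and excludes $k=2m$ via $a+b\le n$, whereas you derive $\ell=a-b\ge 2m+1$ directly from $a+b>n$ and dispatch $\ell\ge 2m+2$ with a correct inline simplex/Plotkin estimate in place of the Rankin citation.
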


\begin{proof}
If $a + b \leq n$, by Theorem \ref{thm:ham_up}, $A(\cH_{2}^{n}, a, b) \leq 1 + \binom{n}{2}$. Now we assume $a + b > n$ and consider the corresponding spherical $2$-code with scalar products $\alpha = 1-\frac{2a}{n}$, $\beta = 1-\frac{2b}{n}$. Then $\alpha + \beta < 0$. By Lemma \ref{lem_sph},
\begin{equation*}
    A(\cH_{2}^{n}, a, b) \leq g(n, \alpha, \beta) \leq \binom{n}{2}
\end{equation*}
unless $\gamma = \frac{2-(\alpha+\beta)}{\beta - \alpha}$ is an odd integer and $n \in \{\gamma^{2}-2, \gamma^{2}-3\}$. Write $\gamma = 2m + 1$ for some integer $m$ and observe that
\begin{equation*}
    \gamma = \frac{a+b}{a-b} = 2m+1
\end{equation*}
implies $\frac{a}{b} = \frac{m+1}{m}$. Denoting $k=\text{gcd}(a,b)$, we get $a=(m+1)k$ and $b=mk$.

By a result of Rankin \cite{Ran55}, the size of a spherical code whose scalar products are all negative cannot be larger than $n+1$. Since $n+1\leq \binom{n}{2}$, it is sufficient to consider the case when $\beta\geq 0$, that is, when $b\leq n/2$. Note that $\alpha+\beta<0$ so $\alpha$ must be negative and $a>n/2$. Using $n \in \{(2m+1)^{2}-2, (2m+1)^{2}-3\}$, we see that $b\leq 2m^2+2m-1$ and $a\geq 2m^2+2m$. Since $b=mk$, $k< 2m+2$. Since $a=(m+1)k$, $k\geq 2m$. The only possible choices are $k=2m$ and $k=2m+1$.

Note that $\alpha+\beta<0$ is equivalent to $a+b>n$. If $k=2m$, $a+b=2m(m+1)+2m^2=4m^2+2m\leq (2m+1)^2-3\leq n$ so this condition is not satisfied. Therefore, the only remaining choice for $k$ is $2m+1$ and $a=(m+1)(2m+1)$, $b=m(2m+1)$.
\end{proof}

\subsection{Exceptional cases} \label{sec:EC}

Here we show that the exceptional cases in Theorem~\ref{thm:ham_partial_pro} cannot give rise to spherical $2$-codes of large size. The main instrument in the proof is a bound on spherical $2$-codes from \cite{GY18}.

\begin{theorem}\cite[Corollary 4]{GY18}\label{thm:2-dist}
Let $X\subset S^{n-1}$ be a spherical code with inner products $\alpha,\beta$. Then
$$|X|\leq \dfrac {n+2}{1-\frac {n-1}{n(1-\alpha)(1-\beta)}},$$
if the right-hand side is positive.
\end{theorem}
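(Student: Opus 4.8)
The plan is to run a positive–definiteness (linear–programming) argument built on the annihilator polynomial of the two inner products, and to read off the denominator $1-\tfrac{n-1}{n(1-\alpha)(1-\beta)}$ as the top Gegenbauer coefficient of that polynomial. Write $X=\{x_1,\dots,x_N\}\subset S^{n-1}$ and set
$$F(t)=\frac{(t-\alpha)(t-\beta)}{(1-\alpha)(1-\beta)},$$
so that $F(1)=1$ and $F(\alpha)=F(\beta)=0$. The first thing I would observe is that, because every off-diagonal inner product is a root of $F$, the $N\times N$ matrix $\big[F(\langle x_i,x_j\rangle)\big]_{i,j}$ is exactly the identity $I_N$. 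This single identity is the engine of the whole proof.

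Next I would expand $F$ in the ultraspherical (Gegenbauer) polynomials $G_0=1$, $G_1(t)=t$, $G_2(t)=\frac{nt^2-1}{n-1}$ attached to $S^{n-1}$, say $F=c_0G_0+c_1G_1+c_2G_2$. A direct computation gives the leading coefficient
$$c_2=\frac{n-1}{n(1-\alpha)(1-\beta)},$$
which is precisely the quantity appearing in the statement, together with $c_0+c_1+c_2=F(1)=1$. Applying this expansion entrywise turns the identity above into the matrix equation $I_N=c_0J+c_1G+c_2M_2$, where $J$ is the all-ones matrix, $G=[\langle x_i,x_j\rangle]$ is the Gram matrix, and $M_2=[G_2(\langle x_i,x_j\rangle)]$. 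By Schoenberg's theorem $M_2\succeq0$, and $c_2>0$, so
$$c_0J+c_1G=I_N-c_2M_2\preceq I_N.$$

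The final step is a rank--trace estimate for $A:=c_0J+c_1G$. Since $A\preceq I_N$, every eigenvalue of $A$ is at most $1$; since $\operatorname{rank}(A)\le\operatorname{rank}(J)+\operatorname{rank}(G)\le n+1$, at most $n+1$ of its eigenvalues are nonzero, and the negative ones only lower the trace, so $\operatorname{tr}(A)\le n+1$. On the other hand $\operatorname{tr}(A)=c_0\operatorname{tr}(J)+c_1\operatorname{tr}(G)=(c_0+c_1)N=(1-c_2)N$, using $\operatorname{tr}(J)=\operatorname{tr}(G)=N$. Combining, $(1-c_2)N\le n+1$, and when the right-hand side of the target inequality is positive (equivalently $1-c_2>0$) this yields $N\le\frac{n+1}{1-c_2}\le\frac{n+2}{1-c_2}$, as required.

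I expect the only genuinely delicate point to be the treatment of the linear term $c_1G$, whose coefficient $c_1=-(\alpha+\beta)/\bigl((1-\alpha)(1-\beta)\bigr)$ has an uncontrolled sign. The pleasant feature of the argument above is that the eigenvalue bound $A\preceq I_N$ combined with the rank count sidesteps any case analysis on the sign of $c_1$: one never needs $c_1\ge0$, only the automatic positivity $c_2>0$ from Schoenberg. I would finally double-check the boundary behaviour (the hypothesis that the displayed fraction be positive is exactly $n(1-\alpha)(1-\beta)>n-1$) and confirm the harmless slack between the $n+1$ that this method produces and the $n+2$ recorded in the statement.
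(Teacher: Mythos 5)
Your proof is correct, and there is in fact no in-paper argument to compare it against: the paper imports this statement verbatim from \cite[Corollary 4]{GY18} without proof, so what you have written is a genuine self-contained derivation. The individual steps all check out: $[F(\langle x_i,x_j\rangle)]_{ij}=I_N$ because $F$ annihilates $\alpha,\beta$ and $F(1)=1$; the Gegenbauer coefficients are as you state (via $t^2=\tfrac1n+\tfrac{n-1}{n}G_2(t)$), and $c_2=\tfrac{n-1}{n(1-\alpha)(1-\beta)}>0$ simply because $\alpha,\beta<1$ — one pedantic correction: this positivity comes from that, not from Schoenberg, whose theorem you need only for $M_2\succeq 0$; the rank count $\operatorname{rank}(c_0J+c_1G)\le 1+n$ and the eigenvalue--trace estimate $(1-c_2)N=\operatorname{tr}(A)\le n+1$ are sound, with the positivity hypothesis of the statement entering exactly where you divide by $1-c_2$. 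Your observation that the rank--trace step makes the sign of $c_1$ irrelevant is the right one; the uncontrolled sign of $c_1$ is precisely what defeats the naive linear-programming chain for this polynomial. Two remarks on the comparison with \cite{GY18}. First, your argument proves the strictly stronger bound $N\le\frac{n+1}{1-c_2}$, and this is sharp: for the regular pentagon ($n=2$, $\alpha=\cos 72^\circ$, $\beta=\cos 144^\circ$) one gets $c_2=\tfrac25$ and the bound $\tfrac{3}{0.6}=5$, attained — so the slack between $n+1$ and the $n+2$ recorded in the statement is real, not an artifact of your write-up. Second, Glazyrin and Yu obtain the $n+2$ constant as a corollary of their general moment/linear-programming machinery for $s$-distance sets and equiangular lines (the same machinery behind their Theorem 3, quoted as Theorem \ref{thm:M_alpha}(e) here), which is slightly lossier in this two-distance instance but scales to more inner products; your route is instead the classical rank--trace pattern underlying the relative bound (Theorem \ref{thm:M_alpha}(c)) adapted to the two-distance annihilator, and for this specific corollary it is both more elementary and marginally sharper.
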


\begin{lemma}\label{lem:no_exceptions}
For all $n\geq 6$,
\begin{equation*}
    A(\cH_{2}^{n}, \{a, b\}) \leq \binom{n}{2},
\end{equation*}
given that $a = (m+1)(2m+1)$, $b = m(2m+1)$ and $n \in \{(2m+1)^{2}-2, (2m+1)^{2}-3\}$ for some positive integer $m$.
\end{lemma}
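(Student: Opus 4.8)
The plan is to invoke the sharper spherical two-distance bound of Theorem~\ref{thm:2-dist} together with the integrality of $|C|$, since the cruder estimate of Lemma~\ref{lem_sph} is exactly strong enough to leave these exceptional parameters unresolved. First I would map $C$ into $S^{n-1}$ through the embedding $f$ from the beginning of Section~3, producing a spherical $2$-code with inner products $\alpha=1-\frac{2a}{n}$ and $\beta=1-\frac{2b}{n}$, so that $1-\alpha=\frac{2a}{n}$ and $1-\beta=\frac{2b}{n}$, whence $(1-\alpha)(1-\beta)=\frac{4ab}{n^{2}}$. Theorem~\ref{thm:2-dist} then gives
$$A(\cH_{2}^{n},\{a,b\})\le\frac{n+2}{1-\frac{n(n-1)}{4ab}}$$
provided the right-hand side is positive.

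The next step is the key algebraic simplification. Setting $q=(2m+1)^{2}$ and substituting $a=(m+1)(2m+1)$, $b=m(2m+1)$, one computes $4ab=4m(m+1)(2m+1)^{2}=(q-1)q$, using $4m(m+1)=(2m+1)^{2}-1=q-1$. The bound thus becomes
$$A(\cH_{2}^{n},\{a,b\})\le\frac{n+2}{1-\frac{n(n-1)}{q(q-1)}}.$$
I would then treat the two admissible lengths separately. For $n=q-2$ one finds $q(q-1)-(q-2)(q-3)=4q-6$ and $n+2=q$, so the bound equals $\frac{q^{2}(q-1)}{2(2q-3)}$; for $n=q-3$ one finds $q(q-1)-(q-3)(q-4)=6(q-2)$ and $n+2=q-1$, giving $\frac{q(q-1)^{2}}{6(q-2)}$. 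In both cases the denominator $1-\frac{n(n-1)}{q(q-1)}$ is positive for $q\ge2$, so Theorem~\ref{thm:2-dist} legitimately applies.

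The crux is that these real-valued bounds do \emph{not} always fall at or below $\binom n2$: for $m=1$, $n=7$ the first expression equals $\frac{81\cdot8}{2\cdot15}=21.6$, whereas $\binom 72=21$. The resolution is that $|C|$ is an integer, so it suffices to prove each bound is strictly less than $\binom n2+1$. For $n=q-2$ this reduces, after clearing the positive denominators, to the cubic inequality $q^{3}-12q^{2}+31q-24>0$, and for $n=q-3$ to $2q^{3}-25q^{2}+83q-84>0$. I expect the remaining (routine) work to be verifying these two cubics: each is positive at the smallest relevant value $q=9$, yielding $12$ and $96$ respectively, and each has positive derivative for $q\ge7$ (the larger roots of $3q^{2}-24q+31$ and $6q^{2}-50q+83$ lie below $6.5$), so both stay positive for every $q=(2m+1)^{2}\ge9$, i.e.\ for all positive integers $m$. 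Since the smallest length occurring is $n=6$ at $m=1$, the hypothesis $n\ge6$ is satisfied throughout and no exceptional case is lost; integrality of $|C|$ then yields $A(\cH_{2}^{n},\{a,b\})\le\binom n2$ in every case, completing the proof.
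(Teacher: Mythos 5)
Your proof is correct and takes essentially the same route as the paper's: both map $C$ to the sphere, apply Theorem~\ref{thm:2-dist} with $4ab=q(q-1)$ where $q=(2m+1)^2$, and use integrality of $|C|$ to pass from a real bound strictly below $\binom n2+1$ to the stated bound $\binom n2$. The only (cosmetic) difference is that the paper uniformizes via $q\geq n+2$ to get the single expression $\frac{(n+1)(n+2)^2}{4n+2}$, which forces a separate check at $n=6$, $m=1$, whereas your exact two-case computation in $q$ covers $n=6$ along with everything else.
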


\begin{proof}
Using the same mapping to the sphere as above, we consider spherical $2$-codes with scalar products $\alpha=1-\frac {2a} {n}$ and $\beta=1-\frac {2b} n$. Note that
$$n(1-\alpha)(1-\beta) = \frac {4ab} {n} = \frac {(2m+1)^4-(2m+1)^2} {n}.$$
Since $(2m+1)^2$ is either $n+2$ or $n+3$,
$$n(1-\alpha)(1-\beta)\geq \frac {(n+2)^2-(n+2)} {n} = \frac{(n+2)(n+1)} {n}.$$

Therefore, by Theorem \ref{thm:2-dist}, the size of the set is no larger than
$$\frac {n+2} {1 - \frac {(n-1)n} {(n+2)(n+1)}} = \frac {(n+1)(n+2)^2} {4n+2} < \binom{n}{2} + 1$$
for all $n\geq 7$. One can also check that the upper bound in Theorem \ref{thm:2-dist} is less than $\binom{n}{2}$ when $n = 6$, $m = 1$.
\end{proof}

Now Theorem~\ref{thm:Hamming2} follows by combining Proposition \ref{prop_ham_low}, Theorem \ref{thm:ham_partial_pro} and Lemma \ref{lem:no_exceptions}.

\begin{remark}
Another way to rule out the existence of large-size codes for the exceptional sets of parameters is to rely on the LP method. This route relies on combining the Delsarte inequalities \eqref{eq:DH} for degrees $k=2,n-2,n-1,n$, but it results
in a longer argument so we do not present it here. Similar (yet different) arguments are used to bound $A(J^{n,w},s)$
in Section~\ref{sec:small-w}.
\end{remark}

\begin{remark}
As follows from \cite{JTY+20}, for large $n$ the value $A(\cH_2^n,\{d_1,d_2\})$ for some distance pairs $d_1,d_2$ is at most linear in $n.$ These results rely on certain technical conditions related to spectral radius of some graphs which we do not cite here. To give an example, for large $n$ the value 
$A(\cH^n_2, \{\frac {3n} {10}, \frac {3n} 5\}) \leq 3n +O(1)$, see \cite[Thm.~1.12]{JTY+20}.

\end{remark}

\section{Bounds for \texorpdfstring{$s$}--codes in the Johnson space}

\subsection{Implications of the harmonic bound}
In this section we derive some general bounds for $A(\cJ^{n,w},s)$ based on a refining of Theorem \ref{thm:Jon_BM_upp}, which 
we state an proof next.
\begin{theorem}
\label{thm:Joh_general}
Let $C \subset \cJ^{n, w}$ be an $s$-code with distance set $\sD=\{d_{1}, \dots, d_{s}\}$, where $w \geq s+1$ and $s \geq 2$. Suppose that 
  \begin{equation}\label{eq:25}
  n \geq sw^{2}-2s(s-1)w+s(s-1)^{2}+2(s-1).
  \end{equation}
Then $|C| \leq \binom{n}{s}-\binom{n}{s-1}+\binom{n}{s-2}$.
\end{theorem}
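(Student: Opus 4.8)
The plan is to invoke the improved harmonic bound for the Johnson space (Theorem~\ref{thm:JNS}), applied to the polynomial $f(t)=\prod_{i=1}^s\frac{d_i-t}{d_i}$, and to show that under hypothesis~\eqref{eq:25} the only Hahn coefficients $f_k$ that can be strictly positive are those for $k\in\{s-2,s\}$, with $f_{s-1}\le 0$. Once that sign pattern is established, Theorem~\ref{thm:JNS} gives
\[
|C|\le \binom{n}{s-1}\frac{n-2s+1}{s}+\binom{n}{s-3}\frac{n-2s+5}{s-2},
\]
and the final task is purely algebraic: verify that this quantity equals (or is bounded by) $\binom{n}{s}-\binom{n}{s-1}+\binom{n}{s-2}$, using the identity $\binom{n}{k-1}\frac{n-2k+1}{k}=\binom{n}{k}-\binom{n}{k-1}$. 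I would expect these two telescoping terms to reassemble exactly into the claimed expression, so the heart of the argument is entirely in controlling the sign of $f_{s-1}$.

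The key computational step is therefore to extract $f_{s-1}$ from the Hahn expansion of $f$. Since $f$ has degree $s$ and leading coefficient $\frac{(-1)^s}{\prod_i d_i}$, the top coefficient $f_s$ is automatically positive (it is a positive multiple of the leading coefficient of $f$ divided by that of $\psi_s$, and an even power of $-1$ appears), so the term $k=s$ survives. First I would write $f(t)=a_st^s+a_{s-1}t^{s-1}+\cdots$, where by Vieta $a_{s-1}/a_s=-\sum_i d_i$ up to sign, and then relate the Hahn coefficient $f_{s-1}$ to the two top power-sum coefficients $a_s,a_{s-1}$ together with the subleading structure of $\psi_s$ and $\psi_{s-1}$ given by~\eqref{eq:Hahn}. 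Matching coefficients of $t^s$ and $t^{s-1}$ in $f=\sum_k f_k\psi_k$ yields $f_{s-1}$ as an explicit affine expression in $\sum_i d_i$ and $n,w,s$.

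The main obstacle is the bookkeeping in this coefficient extraction: the Hahn polynomials in~\eqref{eq:Hahn} are expressed in the falling-factorial basis $\binom{x}{j}$ rather than the monomial basis, so I must convert, or else match coefficients in the $\binom{x}{j}$ basis directly by expanding $f$ in that basis as well. I expect the cleanest route is to compute the two leading $\binom{x}{j}$-coefficients of both $f$ and each relevant $\psi_k$, giving a triangular linear system whose solution for $f_{s-1}$ is an explicit linear function of $\sum_i(w-d_i)$; this mirrors exactly how Theorem~\ref{thm:Jon_BM_upp} is said to force $f_{s-1}\le 0$, and condition~\eqref{eq:25} should be precisely the threshold on $n$ that makes the resulting inequality $f_{s-1}\le 0$ hold for \emph{every} admissible distance set (the worst case being $d_i$ near their extreme allowed values). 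Establishing that~\eqref{eq:25} is exactly the clause that guarantees nonpositivity of $f_{s-1}$ uniformly over $\sD$ is the delicate part; the rest is the telescoping simplification described above.
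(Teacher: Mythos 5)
Your plan --- force $f_{s-1}\le 0$ in the Hahn expansion of $f(t)=\prod_i\frac{d_i-t}{d_i}$ and telescope the surviving terms of Theorem~\ref{thm:JNS} --- is exactly the mechanism behind Theorem~\ref{thm:Jon_BM_upp}, which the paper's proof simply cites rather than re-derives. But your key claim, that \eqref{eq:25} makes $f_{s-1}\le 0$ hold for \emph{every} admissible distance set, ``the worst case being $d_i$ near their extreme allowed values,'' is false, and this is a genuine gap. The sign condition (equivalently, the hypothesis of Theorem~\ref{thm:Jon_BM_upp}) amounts to $\sum_i d_i\le T(n)$, where
\[
T(n)\;=\;sw-\frac{s\bigl(w^2-(s-1)(2w-\tfrac n2)\bigr)}{n-2(s-1)}
\;=\;sw-\frac{s(s-1)}{2}-\frac{s(w-s+1)^2}{n-2(s-1)},
\]
and since $w\ge s+1$ the last fraction is strictly positive for every $n$, so $T(n)<sw-\frac{s(s-1)}{2}$ always. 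The distance set $\sD=\{w,w-1,\dots,w-s+1\}$ attains $\sum_i d_i=sw-\frac{s(s-1)}{2}$ and therefore violates the sign condition for \emph{all} $n$; no threshold on $n$ rescues it. What \eqref{eq:25} actually buys is precisely $\frac{s(w-s+1)^2}{n-2(s-1)}\le 1$, i.e.\ $T(n)\ge sw-\frac{s(s-1)}{2}-1$, which covers every distance set \emph{except} that unique maximal one, since any other set of $s$ distinct distances in $\{1,\dots,w\}$ has $\sum_i d_i\le sw-\frac{s(s-1)}{2}-1$. The paper disposes of the remaining case $\sD=\{w,w-1,\dots,w-s+1\}$ by a different tool altogether, the Johnson bound $|C|\le\prod_{i=0}^{s-1}\frac{n-i}{w-i}$, and checks that this product is at most $\binom ns-\binom n{s-1}+\binom n{s-2}$; your argument has no substitute for this step, so as written it does not prove the theorem.

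Two smaller points. Your assertion that only $k\in\{s-2,s\}$ can carry positive Hahn coefficients is unjustified --- in general all $k\le s-2$ may be positive, exactly as in the Hamming-space analogue Theorem~\ref{thm:ham_up} --- but this slip is harmless: using $\binom n{k-1}\frac{n-2k+1}{k}=\binom nk-\binom n{k-1}$, the sum over all $k\le s-2$ together with $k=s$ telescopes to exactly $\binom ns-\binom n{s-1}+\binom n{s-2}$, so conceding positivity of all those coefficients costs nothing. Also, the coefficient extraction you outline (matching the two leading coefficients of $f$ against those of $\psi_s,\psi_{s-1}$ in the $\binom xj$ basis) need not be redone: it is the content of Theorem~\ref{thm:Jon_BM_upp}, and once that is invoked, the paper's whole proof reduces to the one-line verification that \eqref{eq:25} implies $T(n)\ge\sum_{i=0}^{s-1}(w-i)-1$, plus the exceptional-case Johnson-bound computation you are missing.
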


\begin{proof}
From \eqref{eq:25} we conclude that
\begin{equation*}
    \frac{sw(n-2(s-1))-s(w^{2}-(s-1)(2w-\frac{n}{2}))}{n-2(s-1)} \geq \sum_{i=0}^{s-1}(w-i) - 1.
\end{equation*}
By Theorem \ref{thm:Jon_BM_upp}, if 
\begin{equation*}
    \sum_{i=1}^{s} d_{i} \leq \frac{sw(n-2(s-1))-s(w^{2}-(s-1)(2w-\frac{n}{2})))}{n-2(s-1)},
\end{equation*}
$|C| \leq \binom{n}{s}-\binom{n}{s-1}\frac{n-2s+3}{n-s+2} = \binom{n}{s}-\binom{n}{s-1}+\binom{n}{s-2}$. 
This proves our claim except for the case of $\sD = \{w, w-1, \dots, w-s+1\}$. 
However, for this choice of the distances the Johnson bound \cite{J62}, \cite[p.~528]{MS77} implies that
\begin{equation*}
    |C|\le\prod_{i=0}^{s-1} \frac{n-i}{w-i} \leq \binom{n}{s}-\binom{n}{s-1}+\binom{n}{s-2}.
\end{equation*}
\end{proof}

This theorem implies the following bounds.

\begin{corollary}\label{cor:Johnson_harmonic}
    \begin{align}
A(\cJ^{n, w}, 2) &\leq \binom{n}{2}-n+1 = \binom{n-1}{2}, \quad w\geq 3 \text{ and } n\geq 2w^2-4w+4,\\
    A(\cJ^{n,w},3) &\leq \binom{n}{3}-\binom{n}{2}+n = \binom{n-1}{3}+1, \quad w\ge 4\text{ and }n \geq 3w^{2}-12w+16\\
    A(\cJ^{n,w},4) &\leq \binom{n}{4}-\binom{n}{3}+\binom{n}{2}= \binom{n-1}{4}+n+1, 
    \\ &\hspace*{1in}w\ge 5\text{ and }n \geq 4w^{2}-24w+42.
    \nonumber
\end{align}
\end{corollary}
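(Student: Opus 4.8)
The plan is to read off all three inequalities as direct specializations of Theorem~\ref{thm:Joh_general} at $s=2,3,4$, followed only by elementary binomial bookkeeping; no new idea beyond the theorem is required. I would treat the three cases uniformly, since they differ solely in the value of $s$.

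The first task is to confirm that hypothesis~\eqref{eq:25} specializes to the threshold quoted in each line. Expanding $sw^{2}-2s(s-1)w+s(s-1)^{2}+2(s-1)$ at $s=2,3,4$ gives $2w^{2}-4w+4$, $3w^{2}-12w+16$, and $4w^{2}-24w+42$, matching the lower bounds on $n$ in (1)--(3); simultaneously the standing hypothesis $w\ge s+1$ of Theorem~\ref{thm:Joh_general} becomes exactly $w\ge 3,4,5$. Hence under the stated assumptions the theorem applies and yields $A(\cJ^{n,w},s)\le\binom ns-\binom n{s-1}+\binom n{s-2}$ for $s=2,3,4$.

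It then remains to rewrite this truncated alternating sum in closed form. Applying Pascal's rule $\binom nk=\binom{n-1}k+\binom{n-1}{k-1}$ to each of the three terms, the two interior contributions cancel and one is left with the identity $\binom ns-\binom n{s-1}+\binom n{s-2}=\binom{n-1}s+\binom{n-1}{s-3}$. The residual term $\binom{n-1}{s-3}$ equals $0$, $1$, and $\binom{n-1}1=n-1$ for $s=2,3,4$ respectively, which converts the bound into $\binom{n-1}2$, $\binom{n-1}3+1$, and $\binom{n-1}4+(n-1)$. The only steps that demand any care are this final telescoping---in particular pinning down the additive constant in the $s=4$ case---and the routine specialization of the quadratic-in-$w$ threshold; neither poses a genuine obstacle, so the corollary is essentially immediate from Theorem~\ref{thm:Joh_general}.
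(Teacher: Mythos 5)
Your proposal is correct and coincides with the paper's (implicit) proof: the corollary is given without separate argument precisely because it is the specialization of Theorem~\ref{thm:Joh_general} at $s=2,3,4$, with \eqref{eq:25} evaluating to the three quoted thresholds $2w^2-4w+4$, $3w^2-12w+16$, $4w^2-24w+42$ and the hypothesis $w\ge s+1$ becoming $w\ge 3,4,5$. One point in your favor: your telescoping gives $\binom{n}{4}-\binom{n}{3}+\binom{n}{2}=\binom{n-1}{4}+(n-1)$, which is the correct evaluation (check $n=6$: $15-20+15=10=\binom{5}{4}+5$), whereas the paper's displayed identity ``$=\binom{n-1}{4}+n+1$'' is off by two --- a typo that propagates to \eqref{eq:h5}; since $\binom{n-1}{4}+n-1\le \binom{n-1}{4}+n+1$, the inequality as printed in the corollary remains valid, but your sharper form is the one actually implied by Theorem~\ref{thm:Joh_general}.
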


By fixing the value of $w$ we can obtain specific bounds for the maximum size of $s$-codes.
For instance, taking into account numerical results of Propositions \ref{prop:2_dis_equal_J}, \ref{prop:3_dis_equal_J}, and \ref{prop:4_dis_equal_J} below, we obtain the following results:
  \begin{align}
&A(\cJ^{n, 3},2)=\binom{n-1}{2},\quad n \geq 6 \label{eq:h3}\\
&\binom{n-1}{3} \leq A(\cJ^{n, 4}, 3) \leq \binom{n-1}{3}+1,\quad n \geq 11. \label{eq:h4}\\
&\binom{n-1}{4} \leq A(\cJ^{n, 5}, 4) \leq \binom{n-1}{4}+n+1\quad n \geq 15. \label{eq:h5}
\end{align}
These estimates in fact relate to a special situation when the code is allowed to have all but one distances, or using the
language of intersecting families, forbidding a single intersection. In the next subsection we further explore this point of view,
removing the gap in last two of these estimates. Further, in Sec.~\ref{sec:small-w} we show how these improved bounds can be
obtained using the Delsarte inequalities.

\subsection{EKR and forbidden intersections}\label{sec:forbidden}
As already mentioned, constant weight codes with a specified set of distances can be equivalently described as families of
finite sets with restricted intersections. Switching to this language, in this subsection we prove one simple corollary of the Erd{\H o}s-Ko-Rado theorem that implies bounds for $s$-codes with $w=s+1.$ We will use the following notation. 
 A family $\cF$ of $w$-subsets of $[n]$ is called {\em intersecting} if $F_1\cap F_2\ne\emptyset$ and it is called {\em $t$-intersecting} if $|F_1\cap F_2|\ge t$ for any $F_1,F_2\in \cF.$ Let $m(n,w,t)$ be the maximum size of a $t$-intersecting family. 

\begin{theorem}\cite{EKR61} \label{EKR}
For $1\le t< w$ and $n\ge n_0(w,t)$
   \begin{equation}\label{eq:EKR}
     m(n,w,t)=\binom{n-t}{w-t}.
   \end{equation}
If $t=1$ and $w\le n/2,$ then $m(n,w)\le \binom {n-1}{w-1}.$ If $w<n/2$ then equality holds if and only if there is an $i\in [n]$ such that
$\cF=\{F\subset\binom{[n]}w: i\in F\}.$
\end{theorem}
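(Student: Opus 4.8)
The plan is to establish only the upper bounds, since the matching lower bounds are furnished by the star construction of Proposition~\ref{prop:MN_J2} (which gives $\binom{n-1}{w-1}$ in the case $t=1$) and, for the general statement \eqref{eq:EKR}, by its $t$-fold analogue: the family of all $w$-sets containing a fixed $t$-subset is $t$-intersecting and has size $\binom{n-t}{w-t}$. I would treat the clean case $t=1$ first, by a purely combinatorial route, and handle general $t$ separately.

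For $t=1$ I would use Katona's cyclic permutation method. Fix a cyclic ordering of $[n]$ and call a $w$-set an \emph{arc} if its elements are consecutive on the circle; there are exactly $n$ arcs per ordering. The local claim is that, whenever $w\le n/2$, at most $w$ arcs of any intersecting family $\cF$ can occur in a single cyclic ordering: if an arc $A$ lies in $\cF$, then the other $2(w-1)$ arcs meeting $A$ split into $w-1$ pairs of mutually disjoint arcs (here the inequality $2w\le n$ is exactly what guarantees disjointness), and at most one arc from each such pair can belong to an intersecting family. Double counting the pairs (cyclic ordering, arc of that ordering lying in $\cF$) then gives $|\cF|\,w!\,(n-w)!\le (n-1)!\,w$, since each fixed $w$-set is an arc in precisely $w!(n-w)!$ of the $(n-1)!$ cyclic orderings; rearranging yields $|\cF|\le\binom{n-1}{w-1}$.

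For uniqueness when $w<n/2$, I would analyze the equality case of this averaging. Equality forces every cyclic ordering to contain \emph{exactly} $w$ arcs of $\cF$, and, chasing the pairing argument, that these $w$ arcs all share a common point in each ordering. Propagating this consistency across orderings — or, more cleanly, applying the shifting operators $S_{ij}$, which preserve both $|\cF|$ and the intersecting property, and checking that a left-compressed extremal family must be a star — should force $\cF=\{F:\,i\in F\}$ for a single $i$. The reason strictness $w<n/2$ is essential is that at $w=n/2$ one may instead select one set from each complementary pair $\{F,[n]\setminus F\}$, producing extremal families that are not stars; the argument must genuinely invoke $2w<n$ to exclude these.

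For the general $t$-intersecting statement \eqref{eq:EKR} I would again pass to a left-compressed family, since repeated application of $S_{ij}$ preserves size and the $t$-intersecting property, reducing the problem to bounding a shifted family. For a shifted $t$-intersecting family and $n$ large one shows that every member must essentially contain a fixed $t$-set, by a kernel/sunflower argument or by induction on $w$ and $t$. I expect the genuine obstacle to lie precisely here: unlike the case $t=1$, the star is optimal only for $n$ past a threshold $n_0(w,t)$, because for smaller $n$ the Frankl families $\{F:\,|F\cap[t+2i]|\ge t+i\}$ compete with it (this is the content of the Ahlswede--Khachatrian theorem). Thus the crux is to quantify $n_0(w,t)$ so that the star \emph{strictly} dominates all competitors, which is exactly why the statement is phrased only for sufficiently large $n$.
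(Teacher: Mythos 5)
First, a point of comparison: the paper does not prove Theorem~\ref{EKR} at all --- it is quoted verbatim from \cite{EKR61}, with the exact threshold $n_0(w,t)=(w-t+1)(t+1)$ attributed separately to Wilson \cite{W84}. So there is no in-paper argument to match; what you have written is a reconstruction of the classical proofs. Your $t=1$ upper bound via Katona's cycle method is correct and essentially complete: the pairing of the $2(w-1)$ arcs meeting a fixed arc into $w-1$ disjoint pairs does use $2w\le n$ exactly as you say, and the double count $|\cF|\,w!\,(n-w)!\le w\,(n-1)!$ is right, since each $w$-set is an arc in $w!(n-w)!$ of the $(n-1)!$ cyclic orders. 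Your remark on why $w<n/2$ is needed for uniqueness (complementary pairs at $w=n/2$) is also correct. For the general-$t$ part, note that the statement as phrased only asserts the bound for \emph{some} threshold $n_0(w,t)$, so the full Ahlswede--Khachatrian analysis you gesture at is not required: the original shifting-plus-induction argument of \cite{EKR61} already gives \eqref{eq:EKR} for $n$ sufficiently large, and the sharp threshold is a separate result (Wilson's theorem), which the paper cites but does not fold into Theorem~\ref{EKR}.

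The one genuine soft spot is your uniqueness argument via compression. The step ``a left-compressed extremal family must be a star, hence $\cF$ is a star'' does not follow as stated: the shifts $S_{ij}$ preserve $|\cF|$ and the intersecting property, but they can transform a non-star into a star, so knowing that every compressed extremal family is a star only tells you that $\cF$ \emph{compresses to} a star, not that it \emph{is} one. To close this you must either track the equality case through each individual shift (showing $S_{ij}$ applied to an extremal non-star strictly changes structure in a way that can be reversed), or abandon compression for uniqueness and instead use the equality analysis inside Katona's count --- every cyclic order must contain exactly $w$ arcs of $\cF$ sharing a common element, and this consistency must be propagated across orders --- or, most cleanly, invoke a Hilton--Milner-type statement: any intersecting family not contained in a star has size at most $\binom{n-1}{w-1}-\binom{n-w-1}{w-1}+1<\binom{n-1}{w-1}$ when $w<n/2$, which immediately forces the extremal $\cF$ to be $\{F\in\binom{[n]}{w}:i\in F\}$. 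With that repair, your outline is a sound proof of everything Theorem~\ref{EKR} claims, with the lower bounds supplied, as you note, by the fixed-$t$-set construction and Proposition~\ref{prop:MN_J2}.
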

Clearly the family $\cF=\{F\in \binom{[n]}w: \{1,2,\dots,t\}\subset F\}$ has cardinality $\binom{n-t}{w-t}$ and therefore is 
optimal for $n\ge n_0(w,t)$. The value of $n_0(w,t)$ was found in a series of papers culminating with Wilson's theorem \cite{W84} 
stating that for all $t$
  $$
  n_0(w,t)=(w-t+1)(t+1).
  $$
Rephrasing this for s-codes in $\cJ^{n,w}$, the largest size of a code with distances $\sD=\{1,2,\dots,s=w-t\}$ is 
  \begin{equation}\label{eq:2-EKR}
  A(\cJ^{n,w},\sD)=\binom{n-w+s}{s} \text{ for all }s\le w-1,\;n\ge (s+1)(w-s+1).
  \end{equation}

We will now prove a related result for $s=w-1$ (any single missing distance) through a connection with families with forbidden intersections. Denote by $m(n,w,\bar l)$ the maximum size of a family $\cF$ of $w$-subsets of $[n]$ such that $|F\cap F'|\ne l$ for
any $F,F'\in \cF$.
The well-known Frankl-F{\"u}redi theorem \cite{FF85} asserts that for $w\ge 2l+2$ and sufficiently large $n$,
   $$
   m(n,w,\bar l)\le \binom{n-l-1}{w-l-1}.
   $$
We first establish a simple (weaker) upper bound for $m(n,w,\bar l)$  that applies for all $n\ge 2w-l$ \footnote{In all likelihood, it is available in the literature, however we could not find an immediate reference.}.
\begin{theorem}\label{thm:wl}
Let $1\le l\le w-1$ and $n\ge 2w-l,$ then $m(n,w,\bar l)\le \frac{w-l}{n-l}\binom nw.$
\end{theorem}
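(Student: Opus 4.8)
The plan is to bound the family \emph{locally} by fixing an $l$-set and invoking the Erd\H{o}s--Ko--Rado theorem on the resulting link, and then to \emph{globalize} by double-counting over all $l$-subsets. The guiding observation is that fixing an $l$-element set converts the forbidden-intersection-$l$ condition into an ordinary intersecting condition, to which Theorem~\ref{EKR} applies.

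First I would fix an $l$-subset $L\subset[n]$ and consider the link $\cF_L=\{F\in\cF:L\subseteq F\}$, together with the traces $\{F\setminus L:F\in\cF_L\}$, which are $(w-l)$-subsets of the $(n-l)$-element ground set $[n]\setminus L$. For distinct $F,F'\in\cF_L$ we have $F\cap F'\supseteq L$, so $|F\cap F'|\ge l$; the hypothesis $|F\cap F'|\ne l$ then forces $|F\cap F'|\ge l+1$, which says exactly that $(F\setminus L)\cap(F'\setminus L)\ne\emptyset$. Hence the traces form an \emph{intersecting} family of $(w-l)$-subsets of an $(n-l)$-set.

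Second, I would apply the $t=1$ case of Theorem~\ref{EKR} to this intersecting family. Its hypothesis requires $w-l\le (n-l)/2$, which is precisely the assumption $n\ge 2w-l$; note also $w-l\ge 1$ since $l\le w-1$. This gives $|\cF_L|\le\binom{n-l-1}{w-l-1}$ for every $l$-set $L$. Third, I would double-count the incidences $(L,F)$ with $L\in\binom{[n]}{l}$, $L\subseteq F$, and $F\in\cF$: summing over $F$ gives $|\cF|\binom{w}{l}$, while summing over $L$ and using the link bound gives at most $\binom{n}{l}\binom{n-l-1}{w-l-1}$. Rearranging yields
\[
|\cF|\ \le\ \frac{\binom{n}{l}\binom{n-l-1}{w-l-1}}{\binom{w}{l}},
\]
and a short factorial simplification (using $(n-l-1)!/(n-l)!=1/(n-l)$ and $(w-l)!/(w-l-1)!=w-l$) shows the right-hand side equals $\frac{w-l}{n-l}\binom{n}{w}$, as claimed.

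The only real content is recognizing that fixing an $l$-set reduces the forbidden-intersection condition to an intersecting condition on the links; after that everything is EKR plus bookkeeping. I expect no genuine obstacle. The two points to verify carefully are that the parameter translation makes the EKR hypothesis coincide \emph{exactly} with $n\ge 2w-l$, and that the boundary case $l=w-1$ is consistent: there the link is a family of singletons, EKR gives the trivial bound $1=\binom{n-w}{0}$, and the final formula reduces correctly.
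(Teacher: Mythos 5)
Your proof is correct and is essentially the paper's argument: both reduce the forbidden-intersection-$l$ condition to an intersecting family by fixing an $l$-set, apply the $t=1$ case of Theorem~\ref{EKR} (with the hypothesis $w-l\le(n-l)/2$ matching $n\ge 2w-l$), and conclude via the count $|\cF|\binom{w}{l}=\sum_L|\cF_L|$. The only cosmetic difference is that you bound every link uniformly and sum, whereas the paper bounds an above-average link, which yields the identical inequality.
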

\begin{proof}
Let $\cF$ be a family of $w$-subsets of $[n]$ no two of which intersect on $l$ elements. Every set $F\in \cF$ contains $\binom wl$
$l$-subsets, so the total number of $l$-subsets in all sets $F$ is $|\cF|\binom wl.$ Thus on average every $l$-subset is contained 
in $E:=\frac{|\cF|\binom wl}{\binom nl}$ sets $F$. Now fix an $l$-subset $S$ that is contained in at least $E$ subsets of $\cF$. By $\cF^*\subseteq \cF$ denote the set of all subsets in $\cF$ containing $S$. For any $F,F'\in \cF^*$, $|F\cap F'|\ne l$ and thus
$(F\backslash S)\cap (F'\backslash S)\ne \emptyset.$ This means that the collection
$\{F\backslash S:F\in \cF^*\}$ forms an intersecting family in $[n]\backslash S$, and so 
 $E\le \binom{n-l-1}{w-l-1}$ by the EKR theorem. The last inequality implies that
  \begin{gather*}
  |\cF|\le\frac{\binom nl}{\binom wl}\binom{n-l-1}{w-l-1}=\frac1{\binom wl}\frac{w-l}{n-l}\binom nl\binom{n-l}{w-l}
  =\frac{w-l}{n-l}\binom nw. 
    \end{gather*}
\end{proof}

\begin{corollary}\label{cor:ww-1}
$A(\cJ^{n,w}, w-1) = \binom{n-1}{w-1}$ for $n \geq 2w$.
\end{corollary}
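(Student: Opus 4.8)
The plan is to translate the statement into the language of forbidden intersections and then combine Theorem~\ref{thm:wl} with the Erd\H{o}s--Ko--Rado theorem. First I would note that for $n\ge 2w$ every intersection size $l\in\{0,1,\dots,w-1\}$ is realized by some pair of $w$-subsets of $[n]$, so the complete distance set of $\cJ^{n,w}$ is exactly $\{1,2,\dots,w\}$, a set of $w$ values. Hence a code with exactly $w-1$ distances is precisely a family $\cF$ of $w$-subsets that omits a single distance $d^{*}\in\{1,\dots,w\}$, equivalently one that forbids the single intersection value $l=w-d^{*}\in\{0,1,\dots,w-1\}$. Writing $m(n,w,\bar l)$ as in Theorem~\ref{thm:wl}, any $(w-1)$-distance code lies in some class of families forbidding one intersection $l$, and conversely, so
\begin{equation*}
A(\cJ^{n,w},w-1)=\max_{0\le l\le w-1} m(n,w,\bar l).
\end{equation*}
It therefore suffices to bound each $m(n,w,\bar l)$ by $\binom{n-1}{w-1}$ and to exhibit one $(w-1)$-distance code attaining this value.

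For the lower bound I would take the star $\cF=\{F\in\binom{[n]}{w}: 1\in F\}$. Any two of its members meet in element $1$, so $\cF$ forbids the intersection $l=0$; its intersections range over $\{1,\dots,w-1\}$ (all occur once $n\ge 2w$), so it is a genuine $(w-1)$-distance code of size $\binom{n-1}{w-1}$. This is exactly the construction of Proposition~\ref{prop:MN_J2} with $s=w-1$, which is admissible since $w-1\le n-w$ when $n\ge 2w$. Thus $A(\cJ^{n,w},w-1)\ge\binom{n-1}{w-1}$ with no further work.

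For the upper bound I would split on $l$. When $1\le l\le w-1$, the hypothesis $n\ge 2w$ gives $n\ge 2w-l$, so Theorem~\ref{thm:wl} applies and yields $m(n,w,\bar l)\le\frac{w-l}{n-l}\binom nw$. A one-line comparison then closes this range: since $\binom{n-1}{w-1}=\frac wn\binom nw$, the target inequality $\frac{w-l}{n-l}\le\frac wn$ rearranges to $nl\ge wl$, i.e.\ to $n\ge w$, which holds because $n\ge 2w$. The remaining value $l=0$ is not covered by Theorem~\ref{thm:wl}; there I would instead invoke the intersecting-family bound $m(n,w)\le\binom{n-1}{w-1}$ from Theorem~\ref{EKR}, which is valid precisely because $w\le n/2$. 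Combining the two cases gives $m(n,w,\bar l)\le\binom{n-1}{w-1}$ for every admissible $l$, hence $A(\cJ^{n,w},w-1)\le\binom{n-1}{w-1}$, and together with the lower bound the corollary follows.

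The only genuinely delicate point is the boundary bookkeeping: one must check that the single exceptional case $l=0$ (intersecting families) falls under Theorem~\ref{EKR} rather than Theorem~\ref{thm:wl}, and that the uniform threshold $n\ge 2w$ simultaneously guarantees $w\le n/2$ for the Erd\H{o}s--Ko--Rado bound and $n\ge 2w-l$ for every $l\ge 1$ in Theorem~\ref{thm:wl}. Everything else reduces to the elementary inequality $\frac{w-l}{n-l}\le\frac wn$, so I expect no substantive obstacle beyond confirming that the forbidden-intersection reduction correctly accounts for all $w$ possible distances.
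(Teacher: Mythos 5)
Your proof is correct and takes essentially the same route as the paper: the upper bound via Theorem~\ref{thm:wl} in the form $\max_l \frac{w-l}{n-l}\binom{n}{w}\le\binom{n-1}{w-1}$, and the lower bound via the star construction of Proposition~\ref{prop:MN_J2}. Your explicit handling of the $l=0$ case through the Erd\H{o}s--Ko--Rado bound of Theorem~\ref{EKR} simply spells out what the paper's terse $\max_l$ leaves implicit, since $\frac{w}{n}\binom{n}{w}=\binom{n-1}{w-1}$.
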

\begin{proof} We have 
$$
A(\cJ^{n,w}, w-1)\le \max_l \frac{w-l}{n-l}\binom nw\le \binom{n-1}{w-1}.
$$
At the same time, a $(w-1)$-code all of whose vectors contain a fixed point is of size $\binom{n-1}{w-1},$ hence the equality.
\end{proof}

Concluding this section, note that the problem of the largest size of codes with one forbidden intersection in the $q$-ary Hamming space, $q\ge 3$, was recently 
resolved in \cite{KLLM21}.

\section{Using the Delsarte inequalities: Bounds for small \texorpdfstring{$w$}. }\label{sec:small-w}

\subsection{Preliminaries}
A powerful tool to estimate the maximum size of a code is provided by Delsarte's linear programming bound. We state this bound for
$\cH_2^n$ and $\cJ^{n,w}$ in the following theorem, adapting the statements to $s$-codes.

\begin{theorem}\cite{D73} \label{thm:del}
(a) Let $C \subset \cH_{2}^{n}$ be an $s$-code with distances $\sD= \{d_{1}, \dots, d_{s}\}$. Then $|C|$ is bounded above
by the value of the linear program $\text{LP}_{H}(\sD)$ given by
    \begin{equation}\label{eq:LPH}
    1+\sum_{j=1}^s f_j \to \max,
    \end{equation}
where $f_{j} \geq 0$ for $j = 1, \dots, s$ and
  \begin{equation}\label{eq:DH}
  \sum_{j = 1}^{s} f_{j} \phi_{k}(d_{j}) \geq -\binom nk,\; k=0,\dots, n,
  \end{equation}
and $\phi_k$ are the Krawtchouk polynomials \eqref{eq:Krawtchouk}.
  
(b) Let $C \subset \cJ^{n, w}$ be a constant weight $s$-code with  distances $\sD=\{d_{1}, \dots, d_{s}\}$. Then
$|C|$ is bounded above by the value of linear program $\text{{LP}$_{J}$}(\sD)$ given by
  \begin{equation}\label{eq:LPJ}
 1+\sum_{j = 1}^{ s}f_{j} \to\max,
  \end{equation}
where $f_j\ge 0$ for $j=1,\dots,s$ and
  \begin{equation}\label{eq:DJ}
  \sum_{j = 1}^ s f_{j} \psi_{k}(d_{j}) \geq -1,\;  k=0,1,\dots,w,
  \end{equation}
and $\psi_k$ are the Hahn polynomials \eqref{eq:Hahn}.
\end{theorem}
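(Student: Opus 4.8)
The plan is to bound $|C|$ by exhibiting the actual distance distribution of $C$ as a feasible point of the stated linear program; since the program maximizes a linear functional that agrees with $|C|$ at this point, its optimum is an upper bound on $|C|$. Write $N=|C|$ and, in either space, define the \emph{inner distribution} $B_i=\tfrac1N\bigl|\{(x,y)\in C^2: d(x,y)=i\}\bigr|$, where $d=d_H$ in part (a) and $d=d_J$ in part (b). Because $C$ is an $s$-code, $B_i=0$ unless $i\in\{0,d_1,\dots,d_s\}$, and $B_0=1$. Setting $f_j:=B_{d_j}\ge 0$, we get $N=\sum_i B_i=1+\sum_{j=1}^s f_j$, which matches the objective in \eqref{eq:LPH} and \eqref{eq:LPJ}. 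It then remains only to verify that these $f_j$ satisfy the constraints \eqref{eq:DH}, respectively \eqref{eq:DJ}; feasibility immediately yields the bound.

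For part (a) the key input is the nonnegativity of the Krawtchouk transform of the inner distribution. I would use the standard character identity $\sum_{u\in\ff_2^n,\,|u|=k}(-1)^{\langle u,z\rangle}=\phi_k\bigl(d_H(0,z)\bigr)$, which exhibits $\phi_k$ as the eigenvalues of the binary Hamming scheme. Summing over ordered pairs and factoring the character gives
\[
\sum_{x,y\in C}\phi_k\bigl(d_H(x,y)\bigr)=\sum_{|u|=k}\Bigl|\sum_{x\in C}(-1)^{\langle u,x\rangle}\Bigr|^2\ge 0 .
\]
Dividing by $N$ and separating the $i=0$ term, with $\phi_k(0)=\binom nk$, turns this into $\binom nk+\sum_{j=1}^s f_j\phi_k(d_j)\ge 0$, which is exactly \eqref{eq:DH}. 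This settles (a).

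For part (b) the argument is formally identical once the Krawtchouk polynomials are replaced by the Hahn polynomials \eqref{eq:Hahn}, which play the role of the (normalized) dual eigenvalues of the Johnson association scheme $J(n,w)$, scaled so that $\psi_k(0)=1$ (immediate from \eqref{eq:Hahn}, since $\binom 0j=0$ for $j\ge 1$). The analogous claim is that the Hahn transform $\sum_i B_i\psi_k(i)$ is nonnegative for every $k$; reading off the $i=0$ term and using $\psi_k(0)=1$ converts it into $1+\sum_{j=1}^s f_j\psi_k(d_j)\ge 0$, i.e. \eqref{eq:DJ}. The main obstacle is precisely this nonnegativity: unlike the Hamming case it admits no one-line character computation, so I would invoke the general theory of $P$- and $Q$-polynomial association schemes, namely the positive semidefiniteness $E_k\succeq 0$ of the primitive idempotents of $J(n,w)$. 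Concretely, with $A_i$ the adjacency matrix of the $i$-th relation and $\mathbf 1_C$ the indicator of $C$, one has $\mathbf 1_C^\top A_i\,\mathbf 1_C=N B_i$, so that $\sum_{x,y\in C}\psi_k\bigl(d_J(x,y)\bigr)$ is a positive multiple of $\mathbf 1_C^\top E_k\,\mathbf 1_C\ge 0$. This gives the required inequality and completes (b).
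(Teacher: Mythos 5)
Your proof is correct and is precisely the classical argument behind this theorem, which the paper does not reprove but simply cites from Delsarte \cite{D73}: the inner distribution of $C$ is a feasible point of the LP with objective value $|C|$, feasibility following from nonnegativity of the Krawtchouk transform (via the character-sum identity in $\cH_2^n$) and from positive semidefiniteness of the primitive idempotents of the Johnson scheme $J(n,w)$, whose normalized dual eigenvalues are the Hahn polynomials \eqref{eq:Hahn} with $\psi_k(0)=1$. Both the Hamming computation and the Johnson-scheme step are the standard ones, so there is nothing to reconcile with the paper beyond the citation.
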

Relations \eqref{eq:DH} and \eqref{eq:DJ} are called the {\em Delsarte inequalities}.

The {linear programming bounds} have been one of the main tools in estimating the value of $A(M,s)$ for $M=\cH_{2}^{n},\, \cJ^{n,w}$ as well as for spherical codes. Direct application of these bounds is difficult because of the need to exhaust
all possible subsets $\sD$; however it is possible to restrict the search by using the integrality
conditions quoted in the next theorem.

\begin{theorem}{\cite{MN11}} \label{thm:LRS_J}
Let $C \subset \cJ^{n, w}$ be an $s$-code with distances $\{d_{1}, \dots, d_{s}\}$. Let $N(\cJ^{n, w}, s) = \binom{n}{s-1}$ and 
\begin{equation*}
    K_{i} = \prod_{j \neq i} \frac{d_{j}}{d_{j}-d_{i}}\quad\text{for $i=1, \dots, s$.}
\end{equation*}
 If $|C|\geq 2N(\cJ^{n, w}, s)$, then
$K_{i}$ is an integer and 
\begin{equation*}
|K_{i}| \leq \left\lfloor \frac{1}{2} + \sqrt{\frac{N(\cJ^{n, w}, s)^{2}}{2N(\cJ^{n, w}, s)-2}+\frac{1}{4}} \right\rfloor \quad\text{for $i=1, \dots, s$.}
\end{equation*}
\end{theorem}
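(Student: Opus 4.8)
The plan is to realize each number $K_i$ as a value of a Lagrange interpolation polynomial and then to read off both integrality and the size bound from the spectrum of a $0$--$1$ matrix attached to the distance $d_i$. Write $L_i(t)=\prod_{j\neq i}\frac{t-d_j}{d_i-d_j}$ for the Lagrange basis polynomial with nodes $d_1,\dots,d_s$, so that $L_i(d_j)=\delta_{ij}$ and, directly from the definition in the statement, $K_i=\prod_{j\neq i}\frac{d_j}{d_j-d_i}=L_i(0)$. First I would attach to each $i$ the symmetric matrix $B_i$ indexed by $C\times C$ with entries $(B_i)_{xy}=L_i(d_J(x,y))$. Since $d_J(x,x)=0$ while $d_J(x,y)\in\{d_1,\dots,d_s\}$ for $x\neq y$, the interpolation identities give $B_i=K_iI+A_{d_i}$, where $A_{d_i}$ is the adjacency matrix of the graph on $C$ joining codewords at Johnson distance $d_i$.

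Next I would bound $\operatorname{rank}B_i$. As $\deg L_i\le s-1$, expanding $L_i$ in the Hahn basis \eqref{eq:Hahn} writes $B_i$ as a linear combination of $\bar E_0,\dots,\bar E_{s-1}$, where $\bar E_k$ is the principal submatrix on $C$ of the $k$-th primitive idempotent $E_k$ of the Johnson scheme $J(n,w)$, whose $(x,y)$ entry is proportional to $\psi_k(d_J(x,y))$. Being a principal submatrix of the positive semidefinite $E_k$, each $\bar E_k$ has rank at most $m_k=\binom nk-\binom n{k-1}$, so $\operatorname{rank}B_i\le\sum_{k=0}^{s-1}m_k=\binom{n}{s-1}=N(\cJ^{n,w},s)$. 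Hence $\ker B_i$ has dimension at least $|C|-N(\cJ^{n,w},s)$; equivalently $-K_i$ is an eigenvalue of $A_{d_i}$ with multiplicity $f_i\ge|C|-N(\cJ^{n,w},s)$, and the hypothesis $|C|\ge 2N(\cJ^{n,w},s)$ makes $f_i\ge|C|/2$.

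For integrality I would use that $A_{d_i}$ is a symmetric $0$--$1$ matrix, so its eigenvalues are algebraic integers and the Galois conjugates of an irrational eigenvalue occur with equal multiplicity. If $-K_i$ were irrational, its minimal polynomial would have degree at least $2$, forcing $f_i\le|C|/2$; combined with $f_i\ge|C|/2$ this pins $f_i=|C|/2$ and leaves $A_{d_i}$ with only the two conjugate eigenvalues $\pm\sqrt{D}$, whence $A_{d_i}^2=DI$. This forces the distance-$d_i$ graph to be $D$-regular with no two vertices sharing a common neighbour, which is possible only for a perfect matching, i.e. $\sqrt D=1$ is rational---a contradiction. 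Thus $K_i$ is a rational algebraic integer, so $K_i\in\mathbb Z$.

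The magnitude bound is the delicate part. Here I would combine the trace identities $\operatorname{tr}A_{d_i}=0$ and $\operatorname{tr}A_{d_i}^2=T_i$, where $T_i$ is the number of ordered pairs of codewords at distance $d_i$. Setting aside the easy case in which $-K_i$ is itself the Perron--Frobenius eigenvalue (which quickly forces $|K_i|\le 1$), I would isolate the Perron eigenvalue $\delta$, use $T_i\le|C|\,\delta$ since $\delta$ dominates the average degree, and apply Cauchy--Schwarz to the at most $N(\cJ^{n,w},s)-1$ remaining eigenvalues. This converts the two identities into a quadratic inequality $P(\delta)\le 0$ whose coefficients depend on $K_i$, $f_i$, and $|C|$; as $P$ opens upward and the true $\delta$ satisfies it, its discriminant must be nonnegative. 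Feeding in $f_i\ge|C|-N(\cJ^{n,w},s)$ and $|C|\ge 2N(\cJ^{n,w},s)$ then yields $|K_i|(|K_i|-1)\le\frac{N(\cJ^{n,w},s)^2}{2N(\cJ^{n,w},s)-2}$, which is exactly the stated floor bound after solving the quadratic for $|K_i|$. I expect the two real obstacles to be tracking the constants through the discriminant so that the factor $2$ in the denominator---produced by the estimate $\delta(|C|-\delta)\le|C|^2/4$ for the Perron eigenvalue---comes out cleanly, and disposing of the degenerate boundary case $|C|=2N(\cJ^{n,w},s)$ in the integrality step.
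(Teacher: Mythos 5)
Your proposal is correct and follows essentially the same route as the proof in \cite{Noz11},\,\cite{MN11} that the paper cites for this theorem (the paper itself supplies no proof): the interpolation matrix $K_iI+A_{d_i}$, the rank bound $\sum_{k=0}^{s-1}\bigl(\binom{n}{k}-\binom{n}{k-1}\bigr)=\binom{n}{s-1}$ via principal submatrices of the Johnson-scheme idempotents, and the magnitude estimate from $\operatorname{tr}A_{d_i}=0$, $\operatorname{tr}A_{d_i}^2\le|C|\delta$ and Cauchy--Schwarz on the at most $N-1$ non-Perron, non-$(-K_i)$ eigenvalues all match the cited argument, and carrying out your discriminant step does yield $|K_i|(|K_i|-1)\le N^2/(2N-2)$ (in fact the slightly stronger $|K_i|(|K_i|-1)\le(N-1)/2$). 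One remark: the Galois-conjugate detour in your integrality step is superfluous in the Johnson space, since the distances $d_j$ are positive integers, so $K_i$ is rational by definition, and any rational eigenvalue of the integer matrix $A_{d_i}$ (which $-K_i$ is, its multiplicity being at least $|C|-N\ge N\ge1$) is automatically a rational algebraic integer, hence an integer --- the conjugate-multiplicity argument is genuinely needed only in the Euclidean version of Larman--Rogers--Seidel, where the distance ratios need not be rational.
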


This statement forms an analog of a much earlier result for few-distance sets in $\reals^n$, namely the Larman-Rogers-Seidel theorem
\cite{LRS77}. A similar set of necessary conditions is also known for the Hamming space \cite{MN11}.

\subsection{Bounds for \texorpdfstring{$s$}--codes with small weight}
The following theorem is proved by considering a subset of inequalities in the LP bound. It allows us to calculate an upper bound of an $s$-code with fixed distance sets. To obtain an upper bound on $A(\cJ^{n, w}, s),$ we exhaust all possible $s$-tuples, aided by the conditions in Theorem~\ref{thm:LRS_J}.

\begin{theorem}\label{thm:Joh_lp_2}
Let $C \subset \cJ^{n, w}$ be a $2$-code with distances $d_1, d_2$. Suppose that for some $k_1,k_2\in\{1,\dots,w\}$
   \begin{align}
&\text{$\psi_{k_1}(d_1) \leq \psi_{k_1}(d_2)$ and $\psi_{k_2}(d_2) \leq \psi_{k_2}(d_1)$,}\label{eq:am1}\\
&\text{$\begin{vmatrix} \psi_{k_1}(d_1) & \psi_{k_2}(d_1) \\ \psi_{k_1}(d_2) & \psi_{k_2}(d_2) \end{vmatrix} > 0$,}\label{eq:am2}
\end{align}
then
\begin{equation}\label{eq:B2}
    |C| \leq \frac{\begin{vmatrix} 1 & 1 & 1 \\ 1 & \psi_{k_1}(d_1) & \psi_{k_2}(d_1) \\ 1 & \psi_{k_1}(d_2) & \psi_{k_2}(d_2) \end{vmatrix}}{\begin{vmatrix} \psi_{k_1}(d_1) & \psi_{k_2}(d_1) \\ \psi_{k_1}(d_2) & \psi_{k_2}(d_2) \end{vmatrix}}.
\end{equation}
\end{theorem}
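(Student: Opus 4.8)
The plan is to run Delsarte's linear programming bound of Theorem~\ref{thm:del}(b), but to retain only the two inequalities \eqref{eq:DJ} indexed by $k_1$ and $k_2$ and then take a carefully chosen nonnegative combination of them; this amounts to exhibiting a feasible point of the dual of the two-constraint relaxation, and its objective value will be exactly the claimed ratio. Concretely, let $A_1,A_2\ge 0$ denote the normalized numbers of ordered pairs of codewords at Johnson distances $d_1,d_2$, so that the only nonzero components of the inner distribution are those at distances $0,d_1,d_2$. Since $\psi_k(0)=1$ for all $k$ (the summand in \eqref{eq:Hahn} with $\binom{x}{j}$ vanishes at $x=0$ for $j\ge 1$), we have $|C|=1+A_1+A_2$, and the Delsarte inequalities \eqref{eq:DJ} evaluated at $k=k_1$ and $k=k_2$ read
\[
A_1\psi_{k_1}(d_1)+A_2\psi_{k_1}(d_2)\ge -1,\qquad
A_1\psi_{k_2}(d_1)+A_2\psi_{k_2}(d_2)\ge -1.
\]

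Next I would introduce the multipliers $\mu=\psi_{k_2}(d_1)-\psi_{k_2}(d_2)$ and $\nu=\psi_{k_1}(d_2)-\psi_{k_1}(d_1)$, which are both nonnegative precisely by hypothesis \eqref{eq:am1}. Writing $\Delta$ for the determinant in \eqref{eq:am2}, a direct computation shows that in the combination $\mu\cdot(\text{first inequality})+\nu\cdot(\text{second inequality})$ the coefficients of $A_1$ and of $A_2$ both collapse to the common value $-\Delta$:
\[
\mu\,\psi_{k_1}(d_1)+\nu\,\psi_{k_2}(d_1)=\mu\,\psi_{k_1}(d_2)+\nu\,\psi_{k_2}(d_2)=-\Delta .
\]
This is the crux: the two coefficients agreeing is exactly what makes $A_1+A_2$ appear. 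Adding the two scaled inequalities therefore yields $-\Delta\,(A_1+A_2)\ge -(\mu+\nu)$, and since $\Delta>0$ by \eqref{eq:am2} I may divide (reversing the inequality), obtaining $A_1+A_2\le(\mu+\nu)/\Delta$.

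It then remains to recognize the resulting bound $|C|=1+A_1+A_2\le(\Delta+\mu+\nu)/\Delta$ as the advertised determinant ratio \eqref{eq:B2}: expanding the $3\times 3$ determinant in the numerator along its first row gives the first cofactor $\Delta$ plus the two remaining cofactors, which are exactly $\mu$ and $\nu$, so the numerator equals $\Delta+\mu+\nu$ while the denominator is $\Delta$. I do not expect a genuine obstacle here; the only real content is guessing the correct multipliers $\mu,\nu$, and the work lies in the sign bookkeeping, namely verifying that \eqref{eq:am1} makes $\mu,\nu\ge 0$ (so the combination preserves the inequality) while \eqref{eq:am2} makes $\Delta>0$ (so the final division flips it in the right direction). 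It is worth emphasizing in the write-up that we use only two of the $w+1$ Delsarte constraints, which corresponds to relaxing the linear program; the bound is therefore always valid, though not necessarily tight, and no optimality of an LP vertex needs to be proved.
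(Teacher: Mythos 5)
Your proposal is correct and is essentially the paper's own proof: the paper adds the two Delsarte inequalities at degrees $k_1,k_2$ weighted by exactly your multipliers (written with opposite signs, i.e.\ $\psi_{k_2}(d_2)-\psi_{k_2}(d_1)=-\mu$ and $\psi_{k_1}(d_1)-\psi_{k_1}(d_2)=-\nu$), observes the same collapse of the coefficients to the $2\times 2$ determinant, and identifies the numerator with the same $3\times 3$ determinant before dividing by the positive determinant of \eqref{eq:am2}. The only cosmetic difference is that you apply the two constraints directly to the code's inner distribution $(A_1,A_2)$, while the paper applies them to an optimal solution $(f_1,f_2)$ of the linear program of Theorem~\ref{thm:del}(b); these are interchangeable since the inner distribution is itself a feasible point.
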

\begin{proof}
Let $(f_1,f_2)$ be an optimal solution of Delsarte's linear program $\text{LP}_J$ \eqref{eq:LPJ}-\eqref{eq:DJ}, then $|C| \leq 1 + f_1 + f_2.$ The coefficients $f_1,f_2$ satisfy the Delsarte inequalities \eqref{eq:DJ}.
Consider the inequalities for a pair of degrees $k=k_1,k_2$. Adding them and using \eqref{eq:am1} we obtain
\begin{align*}
    0 &\geq (\psi_{k_2}(d_2) - \psi_{k_2}(d_1))(1 + \psi_{k_1}(d_1)f_1 + \psi_{k_1}(d_2)f_2) \\
    &\hspace*{.5in}+ (\psi_{k_1}(d_1) - \psi_{k_1}(d_2))(1 + \psi_{k_2}(d_1)f_1 + \psi_{k_2}(d_2)f_2) \\
    &=\begin{vmatrix} 0 & -1 & -1 \\ 1 & \psi_{k_1}(d_1) & \psi_{k_2}(d_1) \\ 1 & \psi_{k_1}(d_2) & \psi_{k_2}(d_2) \end{vmatrix} + \begin{vmatrix} \psi_{k_1}(d_1) & \psi_{k_1}(d_2) \\ \psi_{k_2}(d_1) & \psi_{k_2}(d_2) \end{vmatrix}(f_1+f_2)\\
    &=\begin{vmatrix} -1 & -1 & -1 \\ 1 & \psi_{k_1}(d_1) & \psi_{k_2}(d_1) \\ 1 & \psi_{k_1}(d_2) & \psi_{k_2}(d_2) \end{vmatrix}+
    \begin{vmatrix} \psi_{k_1}(d_1) & \psi_{k_1}(d_2) \\ \psi_{k_2}(d_1) & \psi_{k_2}(d_2) \end{vmatrix}(1+f_1+f_2).
\end{align*}
Now using \eqref{eq:am2} we obtain the claimed bound.
\end{proof}

This theorem enables us to prove a number of exact estimates of $A(\cJ^{n,w},2)$ for small $w.$ Before proceeding let us point out
a simple corollary for the case of $d_1=1,d_2=2.$ This is a known result, which follows from Erd{\H o}s-Ko-Rado (Thm.~\ref{EKR}) supplemented with Wilson's theorem; see Sec.~\ref{sec:forbidden} for details and references.
\begin{corollary}\label{cor:1-2}
 If $n\ge 3w-3$  then
   \begin{equation}\label{eq:3w}
A(\cJ^{n,w}, \{1, 2\})=\binom{n-w+2}{2}.
   \end{equation}
\end{corollary}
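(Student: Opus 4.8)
The plan is to prove Corollary~\ref{cor:1-2} by combining a matching lower bound from the standard construction with an upper bound obtained from Theorem~\ref{thm:Joh_lp_2} applied to the specific distance pair $d_1=1,d_2=2$. The lower bound is immediate: by Proposition~\ref{prop:MN_J2} (the EKR-type construction), the code consisting of all $w$-subsets of $[n]$ containing a fixed $(w-2)$-set has distance set $\{1,2\}$ and size $\binom{n-w+2}{2}$, so $A(\cJ^{n,w},\{1,2\})\ge\binom{n-w+2}{2}$. The bulk of the work is therefore to show the reverse inequality $A(\cJ^{n,w},\{1,2\})\le\binom{n-w+2}{2}$ for $n\ge 3w-3$.

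For the upper bound I would instantiate Theorem~\ref{thm:Joh_lp_2} with a judiciously chosen pair of Hahn-polynomial degrees $k_1,k_2$. Since we expect the optimal code to be ``EKR-like'' (all sets through a fixed $(w-2)$-subset), the natural guess is to use the two highest-degree Hahn polynomials, namely $k_1=w-1$ and $k_2=w$, or possibly $k_2=w,k_1=w-1$ in the order dictated by the sign conditions \eqref{eq:am1}. First I would write out $\psi_{k}(1)$ and $\psi_k(2)$ explicitly from the definition \eqref{eq:Hahn} for these degrees; because the argument $x\in\{1,2\}$ is small, the defining sum truncates to only a few terms ($j\le 2$), giving closed-form rational expressions in $n$ and $w$. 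Then I would verify the two hypotheses of the theorem for $n\ge 3w-3$: the monotonicity/sign conditions in \eqref{eq:am1} (that $\psi_{k_1}$ separates $d_1,d_2$ one way and $\psi_{k_2}$ the other), and the positivity of the $2\times2$ determinant in \eqref{eq:am2}. With these verified, the $3\times 3$-over-$2\times 2$ determinant ratio in \eqref{eq:B2} yields an explicit rational upper bound, which I would simplify and show equals (or is at most) $\binom{n-w+2}{2}$.

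The main obstacle will be the choice of the degrees $k_1,k_2$ together with the sign-condition and determinant bookkeeping: the Hahn polynomials have alternating-sign coefficients, so confirming \eqref{eq:am1} and \eqref{eq:am2} in the right direction requires care, and an unlucky choice of degrees can fail the hypotheses even when a valid LP certificate exists. I would expect to need the constraint $n\ge 3w-3$ precisely to force the relevant determinant to have the correct sign and to make the final ratio collapse to the EKR value; tracking where this threshold enters is the delicate step. A cleaner alternative, which I would fall back on if the direct evaluation of \eqref{eq:B2} is messy, is to invoke the Erd\H{o}s--Ko--Rado theorem (Theorem~\ref{EKR}) with $t=w-2$ directly: a code with distances $\{1,2\}$ is exactly a $(w-2)$-intersecting family, so $|C|=m(n,w,w-2)=\binom{n-w+2}{2}$ whenever $n\ge n_0(w,w-2)=(w-(w-2)+1)(w-2+1)=3(w-1)=3w-3$ by Wilson's theorem, matching the stated threshold exactly. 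This combinatorial route both confirms the LP computation and explains the precise range $n\ge 3w-3$.
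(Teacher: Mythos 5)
Your proposal is correct and takes essentially the same route as the paper: the paper instantiates Theorem~\ref{thm:Joh_lp_2} with $k_1=w$, $k_2=w-1$, computes $\psi_w$ and $\psi_{w-1}$ at $x=1,2$ from \eqref{eq:Hahn}, and verifies \eqref{eq:am1}, where the factor $n-3w+3$ appears exactly as you predicted as the source of the threshold $n\ge 3w-3$, before evaluating \eqref{eq:B2} to get $\binom{n-w+2}{2}$ and closing with Proposition~\ref{prop:MN_J2}. Your Erd\H{o}s--Ko--Rado/Wilson fallback with $t=w-2$ is also valid and is precisely the ``known result'' route the paper itself acknowledges just before the corollary and in \eqref{eq:2-EKR}.
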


\begin{proof}
Take in Theorem \ref{thm:Joh_lp_2} $k_1=w,k_2=w-1.$ From \eqref{eq:Hahn} we find
  \begin{gather*}
   \psi_w(1)=-\frac 1{n-w},\;\psi_w(2)=\frac 2{(n-w)(n-w-1)}\\
   \psi_{w-1}(1)=\frac{n-3w+2}{w(n-w)}, \; \psi_{w-1}(2)=-\frac{2(2n-5w+4)}{w(n-w)(n-w-1)}.  
  \end{gather*}
Then $\psi_{w}(2)-\psi_w(1)=\frac{n-w+1}{(n-w)(n-w-1)}\ge 0$ and $\psi_{w-1}(1)-\psi_w(2)=\frac{(n-3 w+3) (n-w+2)}{w (n-w-1) (n-w)}\ge 0,$
so conditions \eqref{eq:am1} are satisfied. Further, 
   $$
   \psi_w(1)\psi_{w-1}(2)-\psi_w(2)\psi_{w-1}(1)=\frac{2 (n-2 w+2)}{w (n-w-1) (n-w)^2}>0,
   $$
so bound \eqref{eq:B2} applies. Evaluating it, we find that $A(\cJ^{n,w}, \{1, 2\})$
is at most the right-hand side of \eqref{eq:3w}. Finally, Proposition \ref{prop:MN_J2} implies that \eqref{eq:3w} holds with equality.
  \end{proof}
It is interesting that the positivity conditions \eqref{eq:am1}, \eqref{eq:am2} recover the exact bound for $n$ in Wilson's theorem by a seemingly different argument.

Similar considerations enable one to recover some other known bounds for $A(\cJ^{n,w},\sD)$. For instance, for $\sD=\{w-1,w\}$ we can take $k_1=2,k_2=1$ and check that \eqref{eq:am1}, \eqref{eq:am2} are satisfied. This yields $A(\cJ^{n,w},\{w-1,w\})\le n(n-1)/w(w-1),$ which coincides with the Johnson bound. Similarly, the inequality $A(\cJ^{n,w},\{1,2,3\})\le \binom{n-w+3}3$ can be proved by taking $k_1=w,k_2=w-1,k_3=w-2$ and recovering the EKR upper bound for all $n\ge 4w-8$,
which is Wilson's condition. 

In the next theorem we use bound \eqref{eq:B2} to estimate the maximum size of constant weight codes with 2 distances irrespective of the values of the distances; however we will have to assume that the weight $w$ is small.

\begin{theorem}\label{thm:Johnson2}\hfill\\
(a) $A(\cJ^{n, 4}, 2) = \binom{n-2}{2}$ for $n \geq 9$.

\nd(b) $A(\cJ^{n, 5}, 2) = \binom{n-3}{2}$ for $n \geq 12$.

\nd(c) $A(\cJ^{n, 6}, 2) = \binom{n-4}{2}$ for $n \geq 35$.
\end{theorem}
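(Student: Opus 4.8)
The plan is to prove each part by combining a lower bound from Proposition~\ref{prop:MN_J2} with a matching upper bound obtained via Theorem~\ref{thm:Joh_lp_2}. The lower bound is immediate: the construction of all $w$-subsets containing a fixed $(w-2)$-set gives a $2$-code with distances $\{1,2\}$ of size $\binom{n-w+2}{2}$, which equals $\binom{n-2}{2}$, $\binom{n-3}{2}$, $\binom{n-4}{2}$ for $w=4,5,6$ respectively. So the entire content is the upper bound $A(\cJ^{n,w},2)\le\binom{n-w+2}{2}$, and by definition this means we must show that \emph{no} choice of two distances $d_1,d_2\in\{1,\dots,w\}$ yields a code larger than this value.

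First I would reduce the search over distance pairs using the integrality conditions of Theorem~\ref{thm:LRS_J}. For fixed small $w$ the ratios $K_i=\prod_{j\ne i}d_j/(d_j-d_i)$ must be bounded integers once the code is large, which cuts the finitely many pairs $(d_1,d_2)$ with $1\le d_1<d_2\le w$ down to a short list of candidates; any pair failing these conditions automatically gives $|C|<2N(\cJ^{n,w},2)=2n$, hence $|C|<\binom{n-w+2}{2}$ for $n$ beyond the stated threshold. For each surviving candidate pair, the strategy is to select a suitable pair of Hahn degrees $k_1,k_2$ (guided by the $d_1=1,d_2=2$ computation in Corollary~\ref{cor:1-2}, where $k_1=w,k_2=w-1$ worked) and verify the sign conditions \eqref{eq:am1}, \eqref{eq:am2}; once these hold, bound \eqref{eq:B2} gives an explicit rational function of $n$ that I would show is at most $\binom{n-w+2}{2}$ for $n$ above the threshold.

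The execution is essentially a finite case analysis that grows with $w$, which explains the increasing length and the increasing lower bound on $n$ ($9,12,35$). For $w=4$ there are only $\binom{4}{2}=6$ pairs and few survive the integrality filter; for $w=6$ there are $\binom{6}{2}=15$ pairs and the arithmetic of checking \eqref{eq:am1}--\eqref{eq:am2} and evaluating \eqref{eq:B2} for each is considerably heavier, which is presumably why the threshold jumps to $n\ge 35$. For each case the concrete Hahn values $\psi_{k}(d)$ are computed from \eqref{eq:Hahn}, the two $2\times 2$ determinants and one $3\times 3$ determinant in \eqref{eq:B2} are expanded as polynomials in $n$, and the resulting bound is compared termwise against $\binom{n-w+2}{2}=\tfrac12(n-w+2)(n-w+1)$.

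The main obstacle I anticipate is precisely the choice of the Hahn degrees $(k_1,k_2)$ for each distance pair: Theorem~\ref{thm:Joh_lp_2} only yields a good bound when a pair of degrees satisfies the sign conditions \emph{and} makes the right-hand side of \eqref{eq:B2} tight enough to beat $\binom{n-w+2}{2}$, and there is no single universal choice that works for all pairs. Finding, for each exceptional distance pair, degrees that simultaneously validate \eqref{eq:am1}--\eqref{eq:am2} and produce a sufficiently strong bound is the delicate step, and it is this per-pair search — rather than any single hard inequality — that makes the weight-by-weight treatment necessary and forces the lower bounds on $n$ to be established case by case.
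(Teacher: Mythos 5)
Your overall strategy coincides with the paper's: lower bound from Proposition~\ref{prop:MN_J2}, the integrality filter of Theorem~\ref{thm:LRS_J} (using $2N(\cJ^{n,w},2)=2n\le\binom{n-w+2}{2}$) to cut down the distance pairs, and Theorem~\ref{thm:Joh_lp_2} applied pair by pair. One of your anticipations is off in an instructive way: there is no delicate per-pair search for Hahn degrees. The paper uses the single uniform choice $k_i=w+1-d_i$ for every surviving pair, and this choice validates \eqref{eq:am1}--\eqref{eq:am2} and beats $\binom{n-w+2}{2}$ in all cases for $n$ large enough; the case analysis is mechanical once this choice is fixed.

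The genuine gap is your claim that this LP case analysis alone yields the upper bound ``for $n$ above the threshold,'' meaning the thresholds $9,12,35$ in the statement. It does not, for parts (a) and (b). The obstruction is not the final comparison with $\binom{n-w+2}{2}$ but the sign conditions \eqref{eq:am1}: for example, for $w=4$ and $\sD=\{2,4\}$ one gets $E_2=\frac{(n-2)(n-12)}{(n-4)(n-5)(n-6)}$, which is negative for $9\le n\le 11$, so bound \eqref{eq:B2} is simply unavailable there, and the LP argument only establishes $A(\cJ^{n,4},2)\le\binom{n-2}{2}$ for $n\ge 12$. Similarly for $w=5$ the worst pairs ($\{2,3\}$ and $\{3,4\}$) only give the bound for $n\ge 18$. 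The paper closes the windows $9\le n\le 11$ and $12\le n\le 17$ by invoking the separately computed numerical LP/SDP results of Proposition~\ref{prop:2_dis_equal_J}; without some such supplement your proof proves parts (a) and (b) only with thresholds $12$ and $18$. (For part (c) your plan is complete as stated: the binding pair $\{2,4\}$ gives exactly $n\ge 35$, which is why the theorem's threshold for $w=6$ is $35$ — the available numerics cover $15\le n\le 24$ but leave $25\le n\le 34$ open, so no improvement is claimed there.)
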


\begin{proof} We will use Theorem~\ref{thm:Joh_lp_2} with $k_i = w+1-d_i, i=1,2$. The scheme of the proof is the same for each of the three cases. We illustrate it in detail for Part (a).

(a) Theorem  \ref{thm:LRS_J} restricts the possible values of distance pairs $d_1,d_2$. Namely, 
since $2N(\cJ^{n, 4}, 2) = 2n \leq \binom{n-2}{2}$ for $n \geq 9$, we only need to consider the 
pairs $\{1, 2\}$, $\{2, 3\}$, $\{2, 4\}$, and $\{3, 4\}$. Now let us use Theorem~\ref{thm:Joh_lp_2}. For $\sD=\{1,2\}$ we
have $k_1=4,k_2=3.$ Denote $E_1=\psi_{k_2}(d_1)-\psi_{k_2}(d_2), E_2=\psi_{k_1}(d_2)-\psi_{k_1}(d_1), E_3=\begin{vmatrix} \psi_{k_1}(d_1) & \psi_{k_1}(d_2) \\ \psi_{k_2}(d_1) & \psi_{k_2}(d_2) \end{vmatrix}$, then we obtain
  \begin{gather*}
  E_1=\frac{(n-2)(n-9)}{4(n-4)(n-5)}, \;E_2=\frac{n-3}{(n-4)(n-5)},\;
  E_3=\frac{n-6}{2(n-4)^2(n-5)}.
  \end{gather*}

These fractions are nonnegative for all $n\ge 9,$ and thus from \eqref{eq:B2} we get
  $$
  |C| \leq \frac{(n-2)(n-3)}{2} = \binom{n-2}{2}.
  $$
The remaining three cases are checked in a similar way. The obtained expressions are listed in Appendix~\ref{App:4,2}. Altogether
this argument proves that for $n\ge 12$ the value of $A(\cJ^{n, 4}, 2)\le \binom{n-2}2.$ Together with the construction of Proposition \ref{prop:MN_J2} and Proposition \ref{prop:2_dis_equal_J}, we obtain the desired result.

(b) Since $2N(\cJ^{n, 5}, 2) = 2n \leq \binom{n-3}{2}$ for $n \geq 10$, by Theorem \ref{thm:LRS_J}, we only need to consider the distance pairs $\{1, 2\}$, $\{2, 3\}$, $\{2, 4\}$, $\{3, 4\}$, $\{4, 5\}$. The expressions listed in Appendix~\ref{App:5,2}
show that $A(\cJ^{n, 5}, 2)\le \binom{n-3}{2}$ for $n\ge 18$. Along with the construction given in Proposition \ref{prop:MN_J2} and Proposition \ref{prop:2_dis_equal_J}, we obtain the desired result.

(c) Since $2N(\cJ^{n, 6}, 2) = 2n \leq \binom{n-4}{2}$ for $n \geq 12$, by Theorem \ref{thm:LRS_J}, we only need to consider the distance pairs $\{1, 2\}$, $\{2, 3\}$, $\{2, 4\}$, $\{3, 4\}$, $\{3, 6\}$, $\{4, 5\}$, $\{4, 6\}$, $\{5, 6\}$.
The expressions listed in Appendix~\ref{App:6,2}
show that $A(\cJ^{n, 6}, 2)\le \binom{n-4}{2}$ for $n\ge 35$. Along with the construction of Proposition \ref{prop:MN_J2}
we obtain the desired result.
\end{proof}

Theorem~\ref{thm:Joh_lp_2} affords a generalization for the case of $s$-codes with $s\ge 2.$
\begin{theorem}\label{thm:Joh_lp_s}
Let $C \subset \cJ^{n, w}$ be a $s$-code with distance $\{d_1, \dots, d_s\}$. Let $k_1, \dots, k_s \in \{1, \dots, w\}$ and 
$\bar\psi_{k} = (\psi_{k}(d_1), \dots, \psi_{k}(d_s))^{\top}$ for $k = 0, \dots, w$. Suppose that
\begin{gather}
    \text{$(-1)^{s}\; \begin{array}{|*7{@{\hspace{.05in}}c}|} \bar\psi_{k_{1}}  &\dots  &\bar\psi_{k_{j-1}} & \bar\psi_{0} &\bar\psi_{k_{j+1}} & \dots  &\bar\psi_{k_s} \end{array} \leq 0$ for $j = 1, \dots, s$,}
    \label{eq:am1s}\\[.1in]
    \text{ $(-1)^{s+1}\begin{vmatrix} \bar\psi_{k_1} & \dots & \bar\psi_{k_s} \end{vmatrix} < 0$.}\label{eq:am2s}
\end{gather}
Then
\begin{equation}
\label{eq:Bs}
    |C| \leq \frac{\begin{vmatrix} 1 & 1 & \dots & 1 \\ 
    {\mathbf 1} & \bar\psi_{k_1} & \dots & \bar\psi_{k_s}  \end{vmatrix}}
    {\begin{vmatrix}\bar\psi_{k_1}&\dots&\bar\psi_{k_s}\end{vmatrix}}.
\end{equation}
\end{theorem}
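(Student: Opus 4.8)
The plan is to imitate the proof of Theorem~\ref{thm:Joh_lp_2}, replacing its two-term combination of Delsarte inequalities by an $s$-term one. Let $(f_1,\dots,f_s)$ be an optimal solution of the linear program $\mathrm{LP}_J$ in \eqref{eq:LPJ}--\eqref{eq:DJ}, so that $|C|\le 1+\sum_{j=1}^s f_j$ and, rewriting \eqref{eq:DJ} for the chosen degrees, $Q_i:=1+\sum_{j=1}^s f_j\psi_{k_i}(d_j)\ge 0$ for each $i=1,\dots,s$. I would form a single linear combination $\sum_{i=1}^s\lambda_i Q_i$ whose multipliers $\lambda_i$ are engineered so that the coefficient of every $f_j$ collapses to the common value $D:=\bigl|\bar\psi_{k_1}\ \cdots\ \bar\psi_{k_s}\bigr|$, the denominator of \eqref{eq:Bs}.

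The natural choice is $\lambda_i:=\bigl|\bar\psi_{k_1}\cdots\bar\psi_{k_{i-1}}\,\bar\psi_0\,\bar\psi_{k_{i+1}}\cdots\bar\psi_{k_s}\bigr|$, the determinant with the $i$th column replaced by $\bar\psi_0$. Since the degree-$0$ Hahn polynomial in \eqref{eq:Hahn} is the constant $1$, we have $\bar\psi_0=\mathbf 1$, so $\lambda_i$ is precisely the determinant appearing in condition \eqref{eq:am1s}. Two determinant identities then drive the argument. First, expanding $\lambda_i$ along its constant column gives $\lambda_i=\sum_j C_{ji}$, the column sum of the cofactors $C_{ji}$ of the matrix $\bigl(\psi_{k_i}(d_j)\bigr)_{j,i}$; the Laplace cofactor identity $\sum_i \psi_{k_i}(d_{j'})\,C_{ji}=\delta_{jj'}D$ then yields $\sum_i\lambda_i\psi_{k_i}(d_{j'})=D$ for every $j'$, so the coefficient of each $f_{j'}$ in $\sum_i\lambda_iQ_i$ is indeed $D$. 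Second, a cofactor expansion of the $(s+1)\times(s+1)$ numerator $N$ of \eqref{eq:Bs} along its first row, after a column swap to identify each minor with $(-1)^{i-1}\lambda_i$, gives $N=D-\sum_i\lambda_i$. Combining these,
\[
\sum_{i=1}^s\lambda_i Q_i=\Bigl(\sum_i\lambda_i\Bigr)+D\sum_j f_j=-N+D\Bigl(1+\sum_j f_j\Bigr).
\]

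It remains to pin down signs. Condition \eqref{eq:am1s} states exactly that $(-1)^s\lambda_j\le 0$ for all $j$, so every $\lambda_i$ carries the common sign $(-1)^{s+1}$; since each $Q_i\ge 0$, the combination $\sum_i\lambda_iQ_i$ inherits that sign, i.e.\ $(-1)^s\bigl[-N+D(1+\sum_j f_j)\bigr]\le 0$. Condition \eqref{eq:am2s} says $(-1)^{s+1}D<0$, that is $(-1)^sD>0$ (and in particular $D\ne 0$, so the $\lambda_i$ are well defined). Dividing the previous inequality by the positive quantity $(-1)^sD$ gives $1+\sum_j f_j\le N/D$, which together with $|C|\le 1+\sum_j f_j$ is exactly \eqref{eq:Bs}.

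I expect the only genuine obstacle to be the sign bookkeeping: keeping the parity factors $(-1)^s$ and $(-1)^{s+1}$ consistent across the two hypotheses and the final division, and getting the orientation right in the column swap that matches the minors of $N$ to the $\lambda_i$. The two determinant identities are themselves routine Laplace expansions; the key structural observation that makes everything fit is $\bar\psi_0=\mathbf 1$, which is what lets one recognize the determinant in \eqref{eq:am1s} as the multiplier $\lambda_j$ and the hypotheses as sign conditions on the chosen linear combination.
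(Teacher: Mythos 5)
Your proof is correct and is essentially the paper's own argument: the paper forms exactly the same linear combination $(-1)^s\sum_j \lambda_j Q_j$ of the selected Delsarte inequalities, with multipliers $\lambda_j$ equal to the determinants in \eqref{eq:am1s} (using $\bar\psi_0=\mathbf 1$), and collapses it to $-N+D\bigl(1+\sum_t f_t\bigr)$ before dividing by $D$ under the sign condition \eqref{eq:am2s}. The only cosmetic difference is that the paper packages your two cofactor identities into a single $(s+1)\times(s+1)$ block-determinant manipulation, whereas you spell them out via Laplace and alien-cofactor expansions; the sign bookkeeping and the column-swap orientation you flag as the delicate points come out exactly as in the paper's computation.
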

\begin{proof}
Let $(f_1, \dots, f_s)$ be an optimal solution of Delsarte's linear program $\text{LP}_J$ \eqref{eq:LPJ}-\eqref{eq:DJ}, 
then $|C| \leq 1 + f_1 +\dots+ f_s.$ By the Delsarte inequalities \eqref{eq:DJ},
\begin{equation*}
    1+ \sum_{i=1}^{s}\psi_{k}(d_i)f_i \geq 0, k = 0, \dots, w.
\end{equation*}
We will use a subset of these inequalities for the degrees $k_1,\dots,k_s$. On account of \eqref{eq:am1s} we obtain
\begin{align*}
    0 \geq& (-1)^{s}\sum_{j=1}^{s}
         \begin{array}{|*7{@{\hspace{.05in}}c}@{\hspace*{0in}}|} \bar\psi_{1} & \dots & \bar\psi_{k_{j-1}} & \bar\psi_{0} & \bar\psi_{k_{j+1}} & \dots & \bar\psi_{k_s} 
         \end{array} 
         \;\Big(1 + \sum_{t=1}^{s}\psi_{k_{j}}(d_{t})f_{t} \Big)\\
    =& (-1)^{s}\begin{vmatrix}
    0 & -1 & \dots & -1\\
    \bar\psi_{0} & \bar\psi_{k_{1}} & \dots & \bar\psi_{k_{s}}
    \end{vmatrix} + 
    (-1)^{s}\begin{vmatrix}
   \bar\psi_{k_{1}} & \dots & \bar\psi_{k_{s}}
    \end{vmatrix}\sum_{t=1}^{s} f_{t}.
\end{align*}
Now recalling that $\psi_0\equiv 1$ and using assumption \eqref{eq:am2s} we obtain
   $$
   1+f_1+\dots+f_s\le \frac{\begin{vmatrix} 1 & 1 & \dots & 1 \\ 
    {\mathbf 1} & \bar\psi_{k_1} & \dots & \bar\psi_{k_s}  \end{vmatrix}}
    {\begin{vmatrix}\bar\psi_{k_1}&\dots&\bar\psi_{k_s}\end{vmatrix}}.
    $$
\end{proof}

In the next two statements we examine a number of implications of this theorem. For the remainder of this section let $k_i=w+1-d_i$
for all $i$ as appropriate. 
\begin{theorem}\label{thm:Johnson3}\hfill\\
(a) $A(\cJ^{n, 4}, 3) = \binom{n-1}{3}$ for $n \geq 11$.

\nd(b) $A(\cJ^{n, 5}, 3) = \binom{n-2}{3}$ for $n \geq 12$.

\nd(b) $A(\cJ^{n, 6}, 3) = \binom{n-3}{3}$ for $n \geq 16$.

\nd(c) $A(\cJ^{n, 7}, 3) = \binom{n-4}{3}$ for $n \geq 20$.
\end{theorem}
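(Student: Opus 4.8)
The plan is to establish matching upper and lower bounds. The lower bound $A(\cJ^{n,w},3)\ge\binom{n-w+3}{3}$ is immediate from Proposition~\ref{prop:MN_J2} (take all $w$-subsets through a fixed $(w-3)$-set, whose distance set is $\{1,2,3\}$), so the entire content lies in the matching upper bound. I would follow exactly the route used for Theorem~\ref{thm:Johnson2}, replacing the two-point estimate of Theorem~\ref{thm:Joh_lp_2} by its $s=3$ generalization, Theorem~\ref{thm:Joh_lp_s}, and handling each admissible distance triple in turn.

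First I would use the integrality restrictions of Theorem~\ref{thm:LRS_J} to reduce to finitely many triples. Here $N(\cJ^{n,w},3)=\binom n2$, so $2N=n(n-1)$: whenever a code $C$ satisfies $|C|<2N$ and $n$ is large enough that $n(n-1)\le\binom{n-w+3}{3}$, we are already done. Otherwise $|C|\ge 2N$, and Theorem~\ref{thm:LRS_J} forces each $K_i=\prod_{j\ne i}d_j/(d_j-d_i)$ to be an integer of bounded absolute value. Since $d_1,d_2,d_3\in\{1,\dots,w\}$ with $w\in\{4,5,6,7\}$, this leaves only a short explicit list of triples per $w$; for instance, for $w=4$ the integrality test alone already retains only $\{1,2,3\}$, $\{1,3,4\}$, $\{2,3,4\}$ (and the magnitude bound prunes $\{2,3,4\}$ for the smallest $n$).

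For each surviving triple I would apply Theorem~\ref{thm:Joh_lp_s} with $k_i=w+1-d_i$, verify the sign hypotheses \eqref{eq:am1s}--\eqref{eq:am2s}, and evaluate the determinant bound \eqref{eq:Bs}. The extremal triple $\{1,2,3\}$ (with $k_1,k_2,k_3=w,w-1,w-2$) should reproduce the Erd\H os--Ko--Rado/Wilson value $\binom{n-w+3}{3}$ exactly, as already indicated after Corollary~\ref{cor:1-2}, while every other admissible triple should yield a strictly smaller value. These are routine but lengthy computations of the $3\times 3$ determinants in \eqref{eq:am1s}, \eqref{eq:am2s} and the $3\times 3$ and $4\times 4$ Hahn determinants in \eqref{eq:Bs}, viewed as rational functions of $n$, which I would relegate to an appendix as was done for Theorem~\ref{thm:Johnson2}.

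The main obstacle is twofold. First, since Theorem~\ref{thm:Joh_lp_s} is only conditional, one must confirm that the sign conditions \eqref{eq:am1s}, \eqref{eq:am2s} genuinely hold for \emph{every} admissible triple throughout the claimed range of $n$; this amounts to tracking the signs of several Hahn-determinant rational functions and is where the $n$-thresholds are pinned down. Second, the elementary inequality $n(n-1)\le\binom{n-w+3}{3}$ does not by itself cover the full range: it matches $n\ge 11$ for $w=4$, but leaves a small gap (e.g. $n=12$ for $w=5$, where $2N=132>120=\binom{10}{3}$), and for $w=6,7$ the binding constraint is instead the determinant verification. I would dispose of these leftover small lengths using the numerical values recorded in Proposition~\ref{prop:3_dis_equal_J}, and then combine everything with the lower bound to conclude.
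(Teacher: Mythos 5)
Your proposal is correct and follows essentially the same route as the paper's own proof: the reduction to a short list of admissible triples via the integrality conditions of Theorem~\ref{thm:LRS_J} combined with the threshold $2\binom{n}{2}\leq\binom{n-w+3}{3}$, then Theorem~\ref{thm:Joh_lp_s} with $k_i=w+1-d_i$ applied triple by triple (sign verifications and determinant evaluations relegated to an appendix, with the EKR triple $\{1,2,3\}$ yielding $\binom{n-w+3}{3}$ exactly), and finally Proposition~\ref{prop:3_dis_equal_J} together with Proposition~\ref{prop:MN_J2} to settle the small lengths. You also correctly identified the two places where the argument needs patching --- the leftover small $n$ (such as $n=12$ for $w=5$) and the fact that the determinant bounds only kick in at larger thresholds (the paper's computations require $n\geq 21,31,50$ for $w=5,6,7$) --- which is exactly how the published proof is assembled.
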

Note that Corollary \ref{cor:ww-1} implies Part (a), with a stronger bound $n\ge 6.$

\begin{proof} We follow the pattern of Theorem~\ref{thm:Johnson2}, supplying details for a part of the proof and moving
the rest to the Appendix.

(a) Begin by noting that Theorem  \ref{thm:LRS_J} restricts the possible values of $d_1,d_2,d_3$. Namely, 
since $2N(\cJ^{n, 4}, 3) = 2\binom{n}{2} \leq \binom{n-1}{3}$ for $n \geq 11$, 
we only need to consider the distance sets $\{1, 2, 3\}$, $\{1, 3, 4\}$, $\{2, 3, 4\}$. 
Now let us use Theorem \ref{thm:Joh_lp_s}. For $\sD=\{1, 2, 3\}$ we
have $k_1=4,k_2=3,k_3=2.$ Let 
   \begin{gather*}
   E_1=-\begin{array}{|*3{@{\hspace{.05in}}c}@{\hspace*{0in}}|} \bar\psi_{0}  & \bar\psi_{k_{2}} & \bar\psi_{k_{3}}\end{array}\,,\; 
   E_2=-\begin{array}{|*3{@{\hspace{.05in}}c}@{\hspace*{0in}}|} \bar\psi_{k_{1}}  & \bar\psi_{0} & \bar\psi_{k_{3}}\end{array}\,,\\
    E_3=-\begin{array}{|*3{@{\hspace{.05in}}c}@{\hspace*{0in}}|} \bar\psi_{k_{1}}  & \bar\psi_{k_{2}} & \bar\psi_{0}\end{array}\,,\; 
    E_4=\begin{array}{|*3{@{\hspace{.05in}}c}@{\hspace*{0in}}|} \bar\psi_{k_1} & \bar\psi_{k_{2}} & \bar\psi_{k_{3}}\end{array}\,.
   \end{gather*}
We compute
\begin{gather*}
  E_1=-\frac{(n-1)(n-2)\left(n^2-14n+51\right)}{24(n-4)(n-5)^2(n-6)}\,, \;E_2=-\frac{(n-1)(n-3)(n-8)}{6(n-4)^2(n-5)(n-6)}\,,\\
  \;E_3=-\frac{(n-2)(n-3)}{2(n-4)^2(n-5)^2},\;
  E_4=-\frac{1}{4(n-4)^{2}(n-5)}. \\
\end{gather*}
These fractions are nonpositive for all $n\ge 8,$ and thus from \eqref{eq:Bs} we get
  $$
  |C| \leq \frac{(n-1)(n-2)(n-3)}{6} = \binom{n-1}{3}.
  $$
The remaining two cases are checked in a similar way. The obtained expressions are listed in Appendix~\ref{App:4,3}. Altogether
this argument proves that for $n\ge 11$ the value of $A(\cJ^{n, 4}, 3)\le \binom{n-1}{3}.$ Together with the construction of Proposition \ref{prop:MN_J2}, we obtain the desired result.

(b) Since $2N(\cJ^{n, 5}, 3) = 2\binom{n}{2} \leq \binom{n-2}{3}$ for $n \geq 13$, by Theorem \ref{thm:LRS_J}, we only need to consider the distance triples $\{1, 2, 3\}$, $\{1, 3, 4\}$, $\{2, 3, 4\}$, $\{2, 3, 5\}$, $\{3, 4, 5\}$. The expressions listed in Appendix~\ref{App:5,3}
show that $A(\cJ^{n, 5}, 3)\le \binom{n-2}{3}$ for $n\ge 21$. Along with the construction in Proposition \ref{prop:MN_J2} and Proposition \ref{prop:3_dis_equal_J}, we obtain the desired result.

(c) Since $2N(\cJ^{n, 6}, 3) = 2\binom{n}{2} \leq \binom{n-3}{3}$ for $n \geq 15$, by Theorem \ref{thm:LRS_J}, we only need to consider the distance triples $\{1, 2, 3\}$, $\{1, 3, 4\}$, $\{2, 3, 4\}$, $\{2, 3, 5\}$, $\{2, 4, 6\}$ $\{3, 4, 5\}$, $\{3, 4, 6\}$, $\{3, 5, 6\}$, $\{4, 5, 6\}$. The expressions listed in Appendix~\ref{App:6,3}
show that $A(\cJ^{n, 6}, 3)\le \binom{n-3}{3}$ for $n\ge 31$. Now Propositions \ref{prop:MN_J2} and  \ref{prop:3_dis_equal_J} imply the desired result.

(d) Since $2N(\cJ^{n, 7}, 3) = 2\binom{n}{2} \leq \binom{n-4}{3}$ for $n \geq 17$, by Theorem \ref{thm:LRS_J}, we only need to consider the distance triples $\{1, 2, 3\}$, $\{1, 3, 4\}$, $\{2, 3, 4\}$, $\{2, 3, 5\}$, $\{2, 4, 6\}$ $\{3, 4, 5\}$, $\{3, 4, 6\}$, $\{3, 4, 7\}$ $\{3, 5, 6\}$, $\{4, 5, 6\}$, $\{4, 6, 7\}$, $\{5, 6, 7\}$. The expressions listed in Appendix~\ref{App:7,3}
show that $A(\cJ^{n, 7}, 3)\le \binom{n-4}{3}$ for $n\ge 50$. As above, Propositions \ref{prop:MN_J2} and  \ref{prop:3_dis_equal_J} imply the desired result.
\end{proof}

\begin{theorem}\label{thm:Johnson4}\hfill\\
(a) $A(\cJ^{n, 5}, 4) = \binom{n-1}{4}$ for $n \geq 15$.

\nd(b) $A(\cJ^{n, 6}, 4) = \binom{n-2}{4}$ for $n \geq 15$.

\nd(c) $A(\cJ^{n, 7}, 4) = \binom{n-3}{4}$ for $n \geq 24$.
\end{theorem}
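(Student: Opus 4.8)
The plan is to reuse, almost verbatim, the scheme developed in the proofs of Theorems~\ref{thm:Johnson2} and \ref{thm:Johnson3}, handling the three weights separately. Part (a) needs no new argument: here $s=4=w-1$, so this is the single forbidden intersection case and Corollary~\ref{cor:ww-1} already gives $A(\cJ^{n,5},4)=\binom{n-1}{4}$ for every $n\ge 2w=10$, which is stronger than the stated range. I would therefore record (a) as a direct consequence of Corollary~\ref{cor:ww-1} and devote the work to (b) and (c).

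For (b) and (c) the lower bound is immediate from Proposition~\ref{prop:MN_J2}: the family of all $w$-subsets containing a fixed $(w-4)$-subset is a $4$-code with distance set $\{1,2,3,4\}$, of size $\binom{n-w+4}{4}$, equal to $\binom{n-2}{4}$ when $w=6$ and $\binom{n-3}{4}$ when $w=7$. The entire task is the matching upper bound, and I would obtain it exactly as before. First apply Theorem~\ref{thm:LRS_J} with $s=4$, so $N(\cJ^{n,w},4)=\binom n3$; once $2\binom n3$ falls below the target binomial, any near-extremal $4$-code has $|C|\ge 2N$, hence its distance set $\sD=\{d_1,d_2,d_3,d_4\}\subseteq\{1,\dots,w\}$ must pass the integrality test on the quantities $K_i$. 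Screening the $4$-subsets of $\{1,\dots,w\}$ through this test leaves only a short explicit list of candidate distance sets for each $w$, just as the triples were enumerated in Theorem~\ref{thm:Johnson3}.

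For every surviving $\sD$ I would invoke Theorem~\ref{thm:Joh_lp_s} with the standing choice $k_i=w+1-d_i$. Concretely this means forming the four $4\times 4$ Hahn determinants obtained by substituting $\bar\psi_0=\mathbf 1$ for one column $\bar\psi_{k_j}$ in turn (these are governed by \eqref{eq:am1s}), together with the plain $4\times 4$ determinant $\det(\bar\psi_{k_1},\dots,\bar\psi_{k_4})$ of \eqref{eq:am2s}, checking that they carry the signs required by the theorem, and then evaluating the determinantal quotient \eqref{eq:Bs}. I expect the distance set $\{1,2,3,4\}$ to yield exactly $\binom{n-w+4}{4}$ (recovering the EKR value, as happened for $\{1,2\}$ and $\{1,2,3\}$ in the lower-order cases) and the remaining candidates to give no larger a value, so that the upper bound matches the construction. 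These checks are mechanical but lengthy, so I would relegate them to an appendix.

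The one genuine obstacle is the sign verification. For $s=4$ the determinants above are rational functions of $n$ whose numerators are polynomials of appreciably higher degree than in the $s=2,3$ cases, and certifying that each keeps a constant sign can only be done for $n$ beyond the largest real root of the numerator. As in the earlier parts, I expect this LP threshold to exceed the stated bounds $n\ge 15$ and $n\ge 24$ (recall that for $s=3$ the method alone forced $n\ge 31$ for $w=6$ and $n\ge 50$ for $w=7$). The final step is therefore to close the residual gap for the intermediate values of $n$ by the explicit numerical computation of Proposition~\ref{prop:4_dis_equal_J}; combining the EKR lower bound, the determinant upper bound for large $n$, and these finite checks gives the stated equalities.
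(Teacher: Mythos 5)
Your proposal is correct and follows essentially the same route as the paper: Theorem~\ref{thm:LRS_J} with $N=\binom n3$ to prune the candidate distance quadruples, Theorem~\ref{thm:Joh_lp_s} with $k_i=w+1-d_i$ and the sign-checked determinants (which, as you anticipated, only certify the bound for $n\ge 30$ when $w=6$ and $n\ge 45$ when $w=7$), Proposition~\ref{prop:MN_J2} for the matching construction, and Proposition~\ref{prop:4_dis_equal_J} to close the residual ranges $15\le n\le 29$ and $20\le n\le 44$. Your dispatch of part (a) directly via Corollary~\ref{cor:ww-1} is the same shortcut the paper itself points out (it nonetheless also runs the LP machinery for $w=5$, where the thresholds $n\ge 10$ and $n\ge 9$ happen not to require the numerical supplement).
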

Note that Corollary \ref{cor:ww-1} implies Part (a), with a stronger bound $n\ge 8.$
\begin{proof} We again follow the pattern of Theorem~\ref{thm:Johnson2} in writing the proof.

(a) Since $2N(\cJ^{n, 5}, 4) = 2\binom{n}{3} \leq \binom{n-1}{4}$ for $n \geq 15$, by Theorem \ref{thm:LRS_J}, we only need to consider the distance tuples $\{1, 2, 3, 4\}$, $\{2, 3, 4, 5\}$. Now let us use Theorem \ref{thm:Joh_lp_s}. For $\sD=\{1, 2, 3, 4\}$ we
have $k_1=5,k_2=4,k_3=3,k_4=2.$ Denote 
   \begin{gather*}
  E_1=\begin{array}{|*4{@{\hspace{.05in}}c}@{\hspace*{0in}}|} \bar\psi_{0}  & \bar\psi_{k_{2}} & \bar\psi_{k_{3}} & \bar\psi_{k_{4}}\end{array}\,, \;
  E_2=\begin{array}{|*4{@{\hspace{.05in}}c}@{\hspace*{0in}}|} \bar\psi_{k_{1}}  & \bar\psi_{0} & \bar\psi_{k_{3}} & \bar\psi_{k_{4}}\end{array}\,, \;
  E_3=\begin{array}{|*4{@{\hspace{.05in}}c}@{\hspace*{0in}}|} \bar\psi_{k_{1}}  & \bar\psi_{k_{2}} & \bar\psi_{0} & \bar\psi_{k_{4}}\end{array}\,, \\
  E_4=\begin{array}{|*4{@{\hspace{.05in}}c}@{\hspace*{0in}}|} \bar\psi_{k_1} & \bar\psi_{k_{2}} & \bar\psi_{k_{3}} & \bar\psi_{0} \end{array}\,, \;
  E_5=-\begin{array}{|*4{@{\hspace{.05in}}c}@{\hspace*{0in}}|} \bar\psi_{k_1} & \bar\psi_{k_{2}} & \bar\psi_{k_{3}} & \bar\psi_{k_{4}} \end{array}\,,
  \end{gather*} 
  then we obtain
\begin{gather*}
E_1=-\frac{(n-1)(n-2)(n-3)(n-4)(n-10)\left(n^2-15n+60\right)}{500(n-5)^2(n-6)^2(n-7)^2(n-8)},\\ E_2=-\frac{(n-1)(n-2)(n-4)^2\left(n^2-17n+78\right)}{100(n-5)^3(n-6)^2(n-7)(n-8)},\\ 
E_3=-\frac{(n-1)(n-3)(n-4)(n-10)}{25(n-5)^2(n-6)^2(n-7)^2},
E_4=-\frac{3(n-2)(n-3)(n-4)}{25(n-5)^3(n-6)^2(n-7)},\\
E_5=-\frac{6(n-4)}{125(n-5)^3(n-6)(n-7)}.\\
\end{gather*}
These fractions are nonpositive for all $n\ge 10,$ and thus from \eqref{eq:Bs} we get
  $$
  |C| \leq \frac{(n-1)(n-2)(n-3)(n-4)}{24} = \binom{n-1}{4}.
  $$
The remaining tuple $\{2, 3, 4, 5\}$ is checked in a similar way. The obtained expressions are listed Appendix~\ref{App:5,4}. Altogether
this argument proves that for $n\ge 15$ the value of $A(\cJ^{n, 5}, 4)\le \binom{n-1}{4}.$ Together with the construction of Proposition  \ref{prop:MN_J2}, we obtain the desired result.

(b) Since $2N(\cJ^{n, 6}, 4) = 2\binom{n}{3} \leq \binom{n-2}{4}$ for $n \geq 17$, by Theorem \ref{thm:LRS_J}, we only need to consider the distance sets $\{1, 2, 3, 4\}$, $\{1, 4, 5, 6\}$, $\{2, 3, 4, 5\}$, $\{2, 3, 4, 6\}$, $\{2, 4, 5, 6\}$, $\{3, 4, 5, 6\}$. The expressions listed in Appendix~\ref{App:6,4}
show that $A(\cJ^{n, 6}, 4)\le \binom{n-2}{4}$ for $n\ge 30$. Now Proposition  \ref{prop:MN_J2} and Proposition \ref{prop:4_dis_equal_J} imply the desired result.

(c) Since $2N(\cJ^{n, 7}, 4) = 2\binom{n}{3} \leq \binom{n-3}{4}$ for $n \geq 20$, by Theorem \ref{thm:LRS_J}, we only need to consider the distance sets $\{1, 2, 3, 4\}$, $\{1, 4, 5, 6\}$, $\{2, 3, 4, 5\}$, $\{2, 3, 4, 6\}$, $\{2, 4, 5, 6\}$, $\{3, 4, 5, 6\}$, $\{3, 4, 6, 7\}$, $\{4, 5, 6, 7\}$. The expressions listed in Appendix~\ref{App:7,4}
show that $A(\cJ^{n, 7}, 4)\le \binom{n-3}{4}$ for $n\ge 45$. As above, Proposition  \ref{prop:MN_J2} and Proposition \ref{prop:4_dis_equal_J} imply the desired result.
\end{proof}

Numerical results obtained in Sec.~\ref{sec:numerical2} relying on the SDP method imply that the approach
considered in this section generally is not strong enough to tighten the gap between the lower and upper bounds. 
The limited scope of Theorem \ref{thm:Joh_lp_s} is highlighted already by the case of $w=7,s=2,$ where the obtained
upper bound falls short of reaching $\binom{n-2}{2}$ for large $n$. See Remark~\ref{rmk:72} for the details.

\subsection{Upper bounds on \texorpdfstring{$2$}--codes for large \texorpdfstring{$n$}.}\label{sec:large-n}
From the preceding results it is clear that for large $n$ the size of maximum 2-codes in the Johnson space $\cJ^{n,w}$ is proportional
to $n^2,$  namely $A(\cJ^{n,w},2) \le cn^2+o(n^2).$ In this section we make the first term of the asymptotics more precise by
evaluating the behavior of the bound in Theorem~\ref{thm:Joh_lp_2}. As a result, we will compute the coefficient $c=c(w,d_1,d_2)$.

We need some asymptotic estimates for the Hahn polynomials. More precise results are abundant in the literature (e.g., 
\cite{LW13}); however the simple bound that stated below is easier to use to compute bounds on codes.
\begin{proposition}\label{prop: Hahn}
Let $k,x\in\{1,2,\dots,w\}.$ As $n \to \infty$, the Hahn polynomial has the order $\psi_k(x) = \frac{1}{\binom{w}{x}}(A + Bn^{-1} + Cn^{-2}) + O(n^{-3})$, where
\begin{align*}
    A &= \binom{w-k}{x}, \\
    B &= -kx\binom{w-k+1}{x}, \\
    C &= -kxw\binom{w-k+1}{x} + \frac{k(k-1)x(x-1)}{2}\binom{w-k+2}{x}.
\end{align*}
\end{proposition}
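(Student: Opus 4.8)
The plan is to compute the asymptotic expansion of $\psi_k(x)$ directly from its defining formula \eqref{eq:Hahn} by treating $n$ as the large parameter and expanding each term in powers of $n^{-1}$. Recall that
\begin{equation*}
\psi_k(x) = \sum_{j=0}^{k} (-1)^j \frac{\binom{k}{j}\binom{n+1-k}{j}}{\binom{w}{j}\binom{n-w}{j}}\binom{x}{j}.
\end{equation*}
Since $k,x,w$ are all fixed while $n\to\infty$, the only $n$-dependent factor in the $j$-th summand is the ratio $\binom{n+1-k}{j}/\binom{n-w}{j}$, and each of these binomial coefficients is a polynomial in $n$ of degree $j$ with known leading behavior. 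First I would write $\binom{n+1-k}{j} = \frac{1}{j!}\prod_{i=0}^{j-1}(n+1-k-i)$ and $\binom{n-w}{j} = \frac{1}{j!}\prod_{i=0}^{j-1}(n-w-i)$, so the ratio is $\prod_{i=0}^{j-1}\frac{n+1-k-i}{n-w-i}$. Factoring out $n$ from numerator and denominator, each factor becomes $\frac{1+(1-k-i)n^{-1}}{1+(-w-i)n^{-1}}$, which I would expand using $\frac{1+an^{-1}}{1+bn^{-1}} = 1+(a-b)n^{-1}+(b^2-ab)n^{-2}+O(n^{-3})$ with $a=1-k-i$ and $b=-w-i$, giving $a-b = w-k+1$ independent of $i$.

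The next step is to collect the product over $i=0,\dots,j-1$ of these per-factor expansions up to order $n^{-2}$. Writing each factor as $1+p\,n^{-1}+q_i\,n^{-2}+O(n^{-3})$ with $p=w-k+1$ (the same for every $i$) and $q_i$ depending on $i$, the product up to $n^{-2}$ equals $1 + jp\,n^{-1} + \big(\binom{j}{2}p^2 + \sum_{i=0}^{j-1} q_i\big)n^{-2} + O(n^{-3})$. Substituting $q_i = b^2-ab = (w+i)(w+i)-(1-k-i)(-(w+i)) = (w+i)(w+i+1-k-i)=(w+i)(w-k+1)$ and summing over $i$ gives a closed-form coefficient of $n^{-2}$ that is a concrete polynomial in $j$, $k$, $w$. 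Thus the ratio $\binom{n+1-k}{j}/\binom{n-w}{j} = 1 + j(w-k+1)n^{-1} + R_j\,n^{-2}+O(n^{-3})$ for an explicit $R_j$, and the full sum becomes
\begin{equation*}
\psi_k(x) = \sum_{j=0}^{k} (-1)^j \frac{\binom{k}{j}\binom{x}{j}}{\binom{w}{j}}\Big(1 + j(w-k+1)n^{-1} + R_j\,n^{-2}\Big)+O(n^{-3}).
\end{equation*}

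It then remains to identify the three resulting $n$-independent sums (the coefficients of $n^0$, $n^{-1}$, $n^{-2}$) with the stated $A$, $B$, $C$, each divided by $\binom{w}{x}$. I would evaluate these using the Vandermonde-type identity $\sum_{j}(-1)^j \frac{\binom{k}{j}\binom{x}{j}}{\binom{w}{j}} = \frac{\binom{w-k}{x}}{\binom{w}{x}}$ (the value $\psi_k(x)$ would take if $n$ were effectively infinite), together with its weighted variants obtained by inserting factors of $j$ and $j^2$, which shift the parameters in the binomial coefficients in a controlled way. The $n^0$ term is exactly $\binom{w-k}{x}/\binom{w}{x}$, matching $A$. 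For the $n^{-1}$ term, the factor $j$ combines with the $(w-k+1)$ coefficient to produce $-kx\binom{w-k+1}{x}/\binom{w}{x}$, matching $B$; for $n^{-2}$, the $R_j$ expansion splits into a piece linear in $j$ and a piece quadratic in $j$, which after resummation yield the two terms of $C$.

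The main obstacle I expect is the bookkeeping in the last paragraph: evaluating the weighted alternating hypergeometric sums $\sum_j (-1)^j j^m \binom{k}{j}\binom{x}{j}/\binom{w}{j}$ for $m=1,2$ and verifying that they collapse to the advertised products $\binom{w-k+1}{x}$ and $\binom{w-k+2}{x}$ with the correct integer prefactors $-kx$, $-kxw$, and $\tfrac{1}{2}k(k-1)x(x-1)$. The cleanest way to handle this is to use the absorption identities $j\binom{k}{j}=k\binom{k-1}{j-1}$ and $j\binom{x}{j}=x\binom{x-1}{j-1}$ to lower the parameters and re-index the sum, after which the base Vandermonde identity applies at shifted arguments; iterating this for the $j^2$ term produces the $\binom{w-k+2}{x}$ factor. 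This is purely mechanical but error-prone, so the verification step is where care is needed rather than any conceptual difficulty.
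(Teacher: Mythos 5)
Your proposal is correct and takes essentially the same approach as the paper: where the paper expands $\log\bigl(\binom{n+1-k}{j}/\binom{n-w}{j}\bigr)$ and exponentiates, you multiply out the factors $\frac{1+(1-k-i)n^{-1}}{1+(-w-i)n^{-1}}$ directly, but both yield the identical second-order coefficient $jw(w-k+1)+\binom{j}{2}(w-k+1)(w-k+2)$. Your concluding resummation via the absorption identities and shifted Vandermonde convolutions is precisely the paper's evaluation of the three weighted sums $\alpha,\beta,\gamma$, so the outline matches the published argument step for step.
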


The proof of this proposition appears in the end of this section.

\begin{corollary}
As $n \to \infty$,
 \begin{align*}
 &\psi_k(x) \sim \frac{\binom{w-k}{x}}{\binom{w}{x}}\text{ if }x \leq w-k\\
 & \psi_k(x) \sim \frac{-kx}{\binom{w}{x}} \cdot n^{-1}\text{ if }x = w-k+1\\
  & \psi_k(x) \sim \frac{k(k-1)x(x-1)}{2\binom{w}{x}} \cdot n^{-2}\text{ if }x = w-k+2\\
 & \psi_k(x) = O(n^{-3})\text{ if }x \geq w-k+3.\\
\end{align*}
\end{corollary}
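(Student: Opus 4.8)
The plan is to read off the large-$n$ behavior of $\psi_k(x)$ directly from the three-term expansion in Proposition~\ref{prop: Hahn}, matching each of the four regimes in the corollary to the first nonvanishing coefficient among $A$, $B$, $C$. The key combinatorial observation that drives every case is the behavior of the binomial coefficient $\binom{w-k}{x}$: for fixed $x,k\in\{1,\dots,w\}$ it is nonzero precisely when $x\le w-k$, and it vanishes exactly when $x>w-k$, i.e.\ when $w-k+1\le x$. I would therefore split according to the sign of $x-(w-k)$.

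First, suppose $x\le w-k$. Then $A=\binom{w-k}{x}\ne0$, so the leading term of $\psi_k(x)=\frac{1}{\binom wx}(A+Bn^{-1}+Cn^{-2})+O(n^{-3})$ is the constant $\frac{A}{\binom wx}=\frac{\binom{w-k}{x}}{\binom wx}$, which is exactly the claimed asymptotic $\psi_k(x)\sim\frac{\binom{w-k}{x}}{\binom wx}$. Second, if $x=w-k+1$ then $A=\binom{w-k}{x}=\binom{w-k}{w-k+1}=0$, so the constant term disappears and the dominant contribution comes from the $n^{-1}$ term with coefficient $B=-kx\binom{w-k+1}{x}=-kx\binom{w-k+1}{w-k+1}=-kx$; hence $\psi_k(x)\sim\frac{-kx}{\binom wx}n^{-1}$, provided this $B$ is itself nonzero, which holds since $k,x\ge1$.

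Third, when $x=w-k+2$ both $A$ and $B$ vanish: $A=\binom{w-k}{w-k+2}=0$, and in $B=-kx\binom{w-k+1}{x}$ the factor $\binom{w-k+1}{w-k+2}=0$. Thus the leading behavior is governed by $C$, where the first summand $-kxw\binom{w-k+1}{x}$ again contains $\binom{w-k+1}{w-k+2}=0$ and so drops out, leaving $C=\frac{k(k-1)x(x-1)}{2}\binom{w-k+2}{x}=\frac{k(k-1)x(x-1)}{2}\binom{w-k+2}{w-k+2}=\frac{k(k-1)x(x-1)}{2}$. This yields $\psi_k(x)\sim\frac{k(k-1)x(x-1)}{2\binom wx}n^{-2}$. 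Here one should note that $x=w-k+2\ge 2$ forces $x\ge2$, and if $k=1$ the case $x=w+1$ is outside the range $x\le w$, so the claimed asymptotic is consistent.

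Finally, for $x\ge w-k+3$ all three of $A$, $B$, $C$ vanish, since each is a linear combination of the coefficients $\binom{w-k}{x}$, $\binom{w-k+1}{x}$, $\binom{w-k+2}{x}$, every one of which is of the form $\binom{m}{x}$ with $m\le w-k+2<x$, hence zero. Therefore the expansion reduces to the error term and $\psi_k(x)=O(n^{-3})$. The only mild subtlety—and the one point I would double-check rather than treat as routine—is verifying that the remainder term $O(n^{-3})$ in Proposition~\ref{prop: Hahn} is genuinely uniform over the finite set of admissible $(k,x)$ pairs, so that in the last case the estimate is not accidentally dominated by a hidden lower-order contribution; since $w$ is fixed and there are only finitely many pairs, this uniformity is automatic, and no genuine obstacle arises. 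The whole corollary is thus a direct bookkeeping exercise reading off which coefficient first survives.
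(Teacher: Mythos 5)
Your proposal is correct and follows exactly the route the paper intends: the corollary is stated as an immediate consequence of Proposition~\ref{prop: Hahn}, obtained by checking which of $A=\binom{w-k}{x}$, $B=-kx\binom{w-k+1}{x}$, $C=-kxw\binom{w-k+1}{x}+\frac{k(k-1)x(x-1)}{2}\binom{w-k+2}{x}$ first fails to vanish as $x$ passes $w-k$, $w-k+1$, $w-k+2$, which is precisely your case analysis (including the correct observation that the $k=1$, $x=w-k+2$ degeneracy lies outside the admissible range $x\le w$). The paper leaves these evaluations implicit; your write-up simply makes them explicit, and your remark on uniformity of the $O(n^{-3})$ term over the finitely many $(k,x)$ pairs is a harmless extra precaution.
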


The main result of this section is stated next.
\begin{theorem}\label{thm:asymptotic_2-dist}
As $n \to \infty$, $
   A(\cJ^{n,w},\{d_1,d_2\})\le c(w,d_1,d_2)n^2(1+o(1)),$ where
  \begin{equation}\label{eq:cwd}
     c(w,d_1,d_2)=\begin{cases}
        \displaystyle \frac{2\binom{w}{d_2}}{d_1d_2(w+1-d_1)(w+1-d_2)}  &\text{if }d_1=d_2-1\\[.1in]
         \displaystyle  \frac{\binom{d_2-1}{d_1}\binom{w}{d_2}}{d_1d_2(w+1-d_1)(w+1-d_2)}&\text{if }d_1<d_2-1.
          \end{cases}
   \end{equation}
\end{theorem}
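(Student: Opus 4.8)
The plan is to derive the asymptotic constant $c(w,d_1,d_2)$ by tracking the leading-order behavior of the bound \eqref{eq:B2} from Theorem~\ref{thm:Joh_lp_2} as $n\to\infty$, using the choice $k_i=w+1-d_i$ that was adopted throughout this section. With this choice, Proposition~\ref{prop: Hahn} and its Corollary tell us exactly how each entry $\psi_{k_i}(d_j)$ scales with $n$: the \emph{diagonal} entries $\psi_{w+1-d_i}(d_i)$ correspond to the case $x=w-k+1$ and hence decay like $n^{-1}$, while the \emph{off-diagonal} entries $\psi_{w+1-d_i}(d_j)$ with $j\ne i$ depend on the sign of $d_j-(w+1-k_i)=d_j-d_i$. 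So the first step is to assemble the $2\times 2$ matrix $\big(\psi_{k_i}(d_j)\big)$ and record the order of each of its four entries, separating the two regimes $d_1=d_2-1$ and $d_1<d_2-1$ that appear in \eqref{eq:cwd}.

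First I would verify that the hypotheses \eqref{eq:am1}, \eqref{eq:am2} of Theorem~\ref{thm:Joh_lp_2} hold for all large $n$, so that the bound \eqref{eq:B2} is legitimately applicable. This follows because the leading asymptotics computed via the Corollary fix the signs of $E_1,E_2$ (the differences in \eqref{eq:am1}) and of the $2\times2$ determinant \eqref{eq:am2}: each is a fixed nonzero rational in $n$ for large $n$, matching the sign pattern already seen in the concrete cases of Theorem~\ref{thm:Johnson2}. Next I would expand the numerator and denominator of \eqref{eq:B2} asymptotically. The denominator is the $2\times2$ determinant $\psi_{k_1}(d_1)\psi_{k_2}(d_2)-\psi_{k_1}(d_2)\psi_{k_2}(d_1)$; identifying its dominant product via the entrywise orders gives its leading power of $n$ and leading coefficient. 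The $3\times3$ numerator determinant expands along its first row into a combination of $2\times2$ minors of Hahn values; after substituting the leading asymptotics from the Corollary, I would extract the single term of largest order in $n$.

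The ratio then yields $c(w,d_1,d_2)n^2(1+o(1))$, and the remaining work is to confirm that the leading coefficient simplifies to the stated closed form in each of the two cases, with the binomial $\binom{d_2-1}{d_1}$ appearing from the off-diagonal entry $\psi_{k_1}(d_2)$ in the case $d_1<d_2-1$ (where $d_2\le w-k_1=d_1-1$ is false, so that entry is governed by the $x\le w-k$ branch $\binom{w-k}{x}/\binom{w}{x}$) and collapsing to a factor of $2$ in the contiguous case $d_1=d_2-1$ (where $d_2=w-k_1+2$ forces the $n^{-2}$ branch). The main obstacle I expect is bookkeeping the competing orders in the numerator determinant: because the diagonal Hahn values are $O(n^{-1})$ while some off-diagonal ones are $O(1)$, several products in the cofactor expansion share the same order of $n$, and one must be careful to collect \emph{all} contributions of the dominant order rather than a single term, and to confirm that no cancellation lowers the leading power. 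Once the dominant products are correctly identified and the elementary binomial identities $\binom{w-k}{x}/\binom{w}{x}$ are simplified, the two cases of \eqref{eq:cwd} follow by direct computation.
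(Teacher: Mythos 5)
Your proposal is correct and follows the paper's own proof essentially step for step: the same choice $k_i=w+1-d_i$ in Theorem~\ref{thm:Joh_lp_2}, the same entrywise Hahn asymptotics from Proposition~\ref{prop: Hahn}, verification of \eqref{eq:am1}--\eqref{eq:am2} for large $n$, and leading-order extraction from the $2\times2$ denominator and $3\times3$ numerator determinants, including the half-cancellation of the two $\Theta(n^{-2})$ products in the denominator that produces the factor $2$ when $d_1=d_2-1$. The only slip is an index swap in your final parenthetical: the $O(1)$ entry supplying $\binom{d_2-1}{d_1}$ is $\psi_{k_2}(d_1)$ (here $x=d_1\le w-k_2=d_2-1$, so the $\binom{w-k}{x}/\binom{w}{x}$ branch applies), whereas $\psi_{k_1}(d_2)$ is $O(n^{-3})$ when $d_1<d_2-1$, exactly as in the paper's computation.
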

\begin{proof}
First we analyze the case of $d_1 + 1 = d_2.$ We have $\psi_{k_1}(d_1) \sim \frac{-k_1d_1}{\binom{w}{d_1}} \cdot n^{-1}$, $\psi_{k_1}(d_2) \sim \frac{k_1k_2d_1d_2}{2\binom{w}{d_2}} \cdot n^{-2}$, $\psi_{k_2}(d_1) \sim \frac{1}{\binom{w}{d_1}}$, $\psi_{k_2}(d_2) \sim \frac{-k_2d_2}{\binom{w}{d_2}} \cdot n^{-1}$. For sufficiently large $n$, assumptions \eqref{eq:am1} of Theorem \ref{thm:Joh_lp_2} holds. The order of the $2$ by $2$ determinant is
    \begin{equation*}
        \begin{vmatrix} \psi_{k_1}(d_1) & \psi_{k_2}(d_1) \\ \psi_{k_1}(d_2) & \psi_{k_2}(d_2) \end{vmatrix} \sim \begin{vmatrix}
            \frac{-k_1d_1}{\binom{w}{d_1}} \cdot n^{-1} & \frac{1}{\binom{w}{d_1}} \\
            \frac{k_1k_2d_1d_2}{2\binom{w}{d_2}} \cdot n^{-2} & \frac{-k_2d_2}{\binom{w}{d_2}} \cdot n^{-1}
        \end{vmatrix} = \frac{k_1k_2d_1d_2}{2\binom{w}{d_1}\binom{w}{d_2}} \cdot n^{-2},
    \end{equation*}
    which is positive for sufficiently large $n$, so we can apply the bound \eqref{eq:B2}. The order of the $3$ by $3$ determinant is
    \begin{equation*}
        \begin{vmatrix} 1 & 1 & 1 \\ 1 & \psi_{k_1}(d_1) & \psi_{k_2}(d_1) \\ 1 & \psi_{k_1}(d_2) & \psi_{k_2}(d_2) \end{vmatrix} \sim \begin{vmatrix} 1 & 1 & 1 \\ 1 & \Theta(n^{-1}) & \frac{1}{\binom{w}{d_1}} \\ 1 & \Theta(n^{-2}) & \Theta(n^{-1}) \end{vmatrix} = \frac{1}{\binom{w}{d_1}} (1+o(1)),
    \end{equation*}
    so the order of the upper bound is $\frac{2\binom{w}{d_2}}{d_1d_2(w+1-d_1)(w+1-d_2)} \cdot n^2$.

Now suppose that $d_1 + 1 < d_2,$  then the order of $\psi_{k_1}(d_1)$ and $\psi_{k_2}(d_2)$ are the same as above, $\psi_{k_2}(d_1) \sim \frac{\binom{d_2-1}{d_1}}{\binom{w}{d_1}}$, and $\psi_{k_1}(d_2) = O(n^{-3})$.  Similar calculations show that assumptions \eqref{eq:am1} and \eqref{eq:am2} of Theorem \ref{thm:Joh_lp_2} hold, and the bound \eqref{eq:B2} gives the order $\frac{\binom{d_2-1}{d_1}\binom{w}{d_2}}{d_1d_2(w+1-d_1)(w+1-d_2)} \cdot n^2$.
\end{proof}

\begin{remark}
The bound of Theorem \ref{thm:asymptotic_2-dist} is tight in two notable cases. The case $d_1=1$ and $d_2=2$, where the correct constant is $c=\frac 12$, is already covered by Corollary \ref{cor:1-2}. In the case $d_1=w-1$ and $d_2=w$, Theorem \ref{thm:asymptotic_2-dist} implies that $c\leq \frac 1 {w(w-1)}$. On the one hand, this coincides with the Johnson bound \cite{J62} already mentioned above. On the other hand, the existence of 2-distance sets of this size for sufficiently large $n$ immediately follows from the celebrated work of Wilson on the existence of balanced incomplete block designs \cite{W75}.
\end{remark}

{
\begin{remark}\label{rmk:72}
As mentioned above, Theorem~\ref{thm:Joh_lp_s} is not strong enough to determine the bound on $s$-codes in general, with an example that Theorem~\ref{thm:Joh_lp_s} cannot give an upper bound $\binom{n-2}{2}$ when $w=7, s=2$. Indeed, consider $d_1=2$, $d_2=4$ in this case. This pair cannot be ruled out by Theorem~\ref{thm:LRS_J}, while Theorem~\ref{thm:Joh_lp_2} can only give an upper bound of $c(7, 2, 4) \cdot n^2(1+o(1)) = \frac{35}{64}n^2(1+o(1))$ asymptotically. In general, our methods cannot deal with any pairs $(d_1, d_2)$ such that $d_2-d_1$ divides $d_1$ and $c(w, d_1, d_2) \geq \frac{1}{2}$. 
\end{remark}
}

\noindent{\em Proof of Proposition \ref{prop: Hahn}}
Recall the expression for $\psi_k(x)$ in \eqref{eq:Hahn}.
We begin with finding the order of $\frac{\binom{n+1-k}{j}}{\binom{n-w}{j}}$. By taking logarithms, we have
\begin{align*}
    \log\frac{\binom{n+1-k}{j}}{\binom{n-w}{j}} &= \log\frac{(n+1-k) \dots (n+2-k-j)}{(n-w) \dots (n-w-j+1)} \\
    &= \sum_{r=0}^{j-1} \left(\log\left(1-\frac{k+r-1}{n}\right) - \log\left(1-\frac{w+r}{n}\right)\right) \\
    &= \sum_{r=0}^{j-1}\left( \frac{(w+r) - (k+r-1)}{n} + \frac{(w+r)^2 - (k+r-1)^2}{2n^2}\right) + O(n^{-3}) \\
    &= j(w-k+1) \cdot n^{-1} + \frac{j(w-k+1)(j+w+k-2)}{2} \cdot n^{-2} + O(n^{-3}).
\end{align*}
Expanding $\exp\Big({\log\frac{\binom{n+1-k}{j}}{\binom{n-w}{j}}}\Big)$ into a power series, we obtain
\begin{align*}
    \frac{\binom{n+1-k}{j}}{\binom{n-w}{j}} &= 1 + j(w-k+1) \cdot n^{-1} + \frac{j(w-k+1)(j+w+k-2) + j^2(w-k+1)^2}{2} \cdot n^{-2}\\
    &\quad + O(n^{-3}) \\
    &= 1 + j(w-k+1) \cdot n^{-1} + jw(w-k+1)n^{-2} \\
    &\quad + \frac{j(j-1)(w-k+1)(w-k+2)}{2} \cdot n^{-2} + O(n^{-3}),
\end{align*}
and
\begin{equation}
    \psi_k(x) = \alpha + (w-k+1)\beta n^{-1} + w(w-k+1)\beta n^{-2} + \binom{w-k+2}{2}\gamma n^{-2} + O(n^{-3}),
\end{equation}
where
   \begin{align*}
   \alpha&=\sum_{j=0}^{k} (-1)^j \binom{k}{j}\frac{\binom{x}{j}}{\binom{w}{j}}=
   \frac1{\binom wx}\sum_{j=0}^{k} (-1)^j \binom kj\binom{w-j}{w-x}=\frac{\binom{w-k}{x}}{\binom{w}{x}}\\
   \beta&=\sum_{j=0}^{k} (-1)^j j\binom{k}{j}\frac{\binom{x}{j}}{\binom{w}{j}}=
     \frac k{\binom wx} \sum_{j=0}^{k} (-1)^j \binom{k-1}{j-1}\binom{w-j}{w-x}=-k\frac{\binom{w-k}{x-1}}{\binom{w}{x}}\\
   \gamma&= \sum_{j=0}^{k} (-1)^j j(j-1)\binom{k}{j}\frac{\binom{x}{j}}{\binom{w}{j}}=
     \frac {k(k-1)}{\binom wx} \sum_{j=0}^{k} (-1)^j \binom{k-2}{j-2}\binom{w-j}{w-x}\\
      &=k(k-1)\frac{\binom{w-k}{x-2}}{\binom{w}{x}},
  \end{align*}
where the last step in each of these three equalities is a version of the Vandermonde convolution.

\section{Numerical results} \label{sec:numerical}

In this section we list some new numerical results for the size of $s$-codes with small $s$ together with the previously
known results of \cite{BM11,MN11}. While many of them for large values of $n$ are superseded by the general results listed in
Table~\ref{table:gb}, for small $n$ the methods used in the previous section are not strong enough to yield exact values.
Here the numerical results are useful in that they enable us to decrease the values on the code length $n$ starting with which
the general bounds apply. Further, the numerical results also extend to values of $w$ that could not be handled by the approach of Section \ref{sec:small-w}.  The results cited below combine the calculations performed in 
\cite{BM11,MN11} with several new sets of parameters found here. The new sets were obtained by using
either LP or Schrijver's semidefinite programming bounds (for reference, their statements are listed in
Appendix \ref{sec:SDPbounds}; see \cite{S05}  for a complete treatment).
 
\subsection{Bounds for 3-codes in the Hamming space}
Unlike the case of $s=2$, for 3 distances we are able to obtain exact values of $A(\cH_2^n,3)$ only for a set of small values of 
$n$. Namely, the following is true.
\begin{proposition} For $8\le n\le 22, 24\le n\le 37$ and $n=44$, $A(\cH_2^n,3)=n+\binom n3.$
\end{proposition}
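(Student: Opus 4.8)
The plan is to sandwich $A(\cH_2^n,3)$ between matching lower and upper bounds, the lower one holding by construction and the upper one by the linear programming method. For the lower bound apply Proposition~\ref{prop_ham_low} with $s=3$: since $n\ge 6$, the set of all binary vectors of odd weight at most $3$ (that is, of weights $1$ and $3$) is a $3$-code with distance set $\{2,4,6\}$ and cardinality $\binom n1+\binom n3=n+\binom n3$. It therefore remains to establish the matching upper bound $A(\cH_2^n,3)\le n+\binom n3$ for each $n$ in the stated range.

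For the upper bound I would invoke the Delsarte bound of Theorem~\ref{thm:del}(a), so that $A(\cH_2^n,3)$ is at most the largest value of $\mathrm{LP}_H(\sD)$ over all admissible triples $\sD=\{d_1,d_2,d_3\}\subseteq\{1,\dots,n\}$. The first step is to make this search finite. Observe that $\binom n3\ge 2\binom n2$ for all $n\ge 8$, so that $n+\binom n3>2\binom n2=2N(\cH_2^n,3)$; hence any code violating the target bound has size exceeding $2N(\cH_2^n,3)$, and the Hamming-space analog of the Larman--Rogers--Seidel integrality conditions (\cite{MN11}; compare Theorem~\ref{thm:LRS_J}) forces the associated products of distances to be integers of bounded magnitude. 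This reduces the problem, for each fixed $n$, to a short explicit list of candidate triples. For the triples on this list one evaluates $\mathrm{LP}_H(\sD)$; as in the proofs of Theorems~\ref{thm:Johnson2}--\ref{thm:Johnson4}, it usually suffices to combine only a few of the Delsarte inequalities \eqref{eq:DH}, for a suitable choice of degrees $k$, in order to certify $\mathrm{LP}_H(\sD)\le n+\binom n3$.

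For the bulk of the range---all $n$ with $8\le n\le 22$, $24\le n\le 33$, $36\le n\le 37$, and $n=44$---this computation was already carried out in \cite{MN11}, and those cases can simply be quoted. The genuinely new content, and the point I expect to be the main obstacle, is at $n=34$ and $n=35$, which \cite{MN11} had to leave aside because for a handful of distance triples the pure LP value exceeds $n+\binom n3$. The plan is to eliminate precisely these residual triples by the stronger Schrijver semidefinite programming bound (stated in Appendix~\ref{sec:SDPbounds}; see \cite{S05}), whose value I expect to fall strictly below $n+\binom n3$ at each of them, thereby showing that $34$ and $35$ are not genuine exceptions. By contrast $n=23$ is omitted from the statement on purpose: the dual Golay code gives $A(\cH_2^{23},3)\ge 2048>23+\binom{23}3$, so the formula genuinely fails there.
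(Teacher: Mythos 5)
Your proposal matches the paper's proof essentially step for step: the lower bound from the odd-weight construction of Proposition~\ref{prop_ham_low}, the bulk of the range quoted from \cite{BM11,MN11}, and the two genuinely new values $n=34,35$ handled by combining the Hamming-space LRS-type integrality conditions of \cite{MN11} with Schrijver's semidefinite program (Theorem~\ref{thm:sdp_Ham}) on the surviving triples, exactly as the authors do. The only quibble is your finiteness estimate, where you take $N(\cH_2^n,3)=\binom n2$ as in the Johnson space rather than the Hamming-space dimension $1+n+\binom n2$; this is harmless here since you defer all small $n$ to \cite{MN11} and the inequality holds with ample room at $n=34,35$.
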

Most of these results were established in \cite{BM11} and \cite{MN11}, and our contribution is the values $n=34,35.$
Calculations are simplified by the observation that the triples of distances in a 3-code satisfy certain integrality conditions of LRS type similar to those listed in Theorem~\ref{thm:LRS_J} for the Johnson space \cite{MN11}. Using them, we computed the upper bounds by implementing the semidefinite program of Theorem \ref{thm:sdp_Ham} and performing the calculations for the allowable triples. 

\subsection{Bounds for \texorpdfstring{$s$}--codes in the Johnson space}
\label{sec:numerical2}
Previously known explicit bounds for $A(\cJ^{n,w},2)$, cited from \cite{BM11,MN11}, are as follows. Note that the results for $w=s+1$ are now fully covered by our Corollary~\ref{cor:ww-1}. The other cited results either augment our bounds for small $n$, or simply remain the best known where our methods are not powerful enough.
\begin{proposition}
\label{prop:2_dis_equal_J}
$A(\cJ^{n, w}, 2) = \binom{n-w+2}{2}$ if $n$ and $w$ satisfy one of the following conditions:
{\begin{enumerate}
    \item
    $w=3$ and $6\le n\le 46$;
   \item
   $w=4$ and $9\le n\le 46$;
    \item
    $w=5$ and $12\le n\le 46$;
    \item
    $w=6$ and $15\le n\le24$ or $35 \leq n \leq 46$ .
\end{enumerate}}
\end{proposition}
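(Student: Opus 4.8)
The plan is to establish the matching lower and upper bounds separately: the lower bound by the standard intersecting construction, and the upper bound by a finite computation built on Delsarte's linear program and pruned by the integrality conditions of Theorem~\ref{thm:LRS_J}.

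For the lower bound, note that in every listed case $2\le n-w$, so Proposition~\ref{prop:MN_J2} with $s=2$ applies: the family of all vectors in $\cJ^{n,w}$ whose support contains a fixed set of $w-2$ coordinates is a $2$-code with distance set $\{1,2\}$ and cardinality $\binom{n-w+2}{2}$. Hence $A(\cJ^{n,w},2)\ge\binom{n-w+2}{2}$ holds uniformly across all four items, and only the reverse inequality must be verified for each pair $(n,w)$ in the stated ranges.

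For the upper bound I would proceed as follows. Fix $(n,w)$ in one of the ranges. For all but the smallest listed lengths one has $2N(\cJ^{n,w},2)=2n\le\binom{n-w+2}{2}$, so any $2$-code that fails the integrality conditions of Theorem~\ref{thm:LRS_J} — and is therefore of size below $2n$ — already obeys the target bound; it remains to control codes of size at least $2n$. For these, Theorem~\ref{thm:LRS_J} forces $K_1,K_2$ to be integers of bounded absolute value, reducing the admissible distance pairs $\{d_1,d_2\}\subseteq\{1,\dots,w\}$ to a short explicit list, precisely the lists enumerated in the proof of Theorem~\ref{thm:Johnson2}. For each surviving pair one evaluates Delsarte's program $\text{LP}_J(\{d_1,d_2\})$ of Theorem~\ref{thm:del}(b), or equivalently applies the closed-form determinant bound \eqref{eq:B2} of Theorem~\ref{thm:Joh_lp_2} with a well-chosen degree pair $k_1,k_2$ (the choice $k_i=w+1-d_i$ succeeds for many pairs, as in Corollary~\ref{cor:1-2}); where the linear program is not tight one instead invokes Schrijver's semidefinite bound from Appendix~\ref{sec:SDPbounds}. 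In each instance the computed value is checked to be at most $\binom{n-w+2}{2}$, and taking the maximum over the finite list yields $A(\cJ^{n,w},2)\le\binom{n-w+2}{2}$. The handful of smallest lengths (notably $w=3$, $n=6$, where $2n>\binom{n-w+2}{2}$ and Corollary~\ref{cor:Johnson_harmonic} does not yet apply) fall outside this reduction and are closed by directly solving the linear or semidefinite program for each of the few distance pairs in $\{1,\dots,w\}$.

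The main obstacle is exhaustiveness and numerical sharpness rather than any single analytic step. One must certify that the pruning via Theorem~\ref{thm:LRS_J} captures every distance pair realizable by a large code, and that for each surviving pair the LP or SDP optimum, after rounding down to an integer, equals $\binom{n-w+2}{2}$ exactly rather than merely approximately; this is delicate because the optimum and the target agree only up to lower-order terms. The regime $w=6$ makes the difficulty explicit: for $25\le n\le 34$ the bound produced by this method strictly exceeds $\binom{n-4}{2}$, which is exactly why those lengths are absent from item~(4) and only the two sub-ranges $15\le n\le 24$ and $35\le n\le 46$ survive. Closing those intermediate lengths would require a genuinely stronger tool than the two-inequality LP or the present SDP relaxation.
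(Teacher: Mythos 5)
Your proposal matches the paper's treatment: the paper does not re-prove this proposition but cites it from \cite{BM11,MN11}, and the method behind those computations---the lower bound from the construction of Proposition~\ref{prop:MN_J2} with $s=2$ (distance set $\{1,2\}$), pruning of distance pairs via the integrality conditions of Theorem~\ref{thm:LRS_J} together with the trivial bound $2N(\cJ^{n,w},2)-1=2n-1$ when those conditions fail, and evaluation of Delsarte's LP (equivalently the determinant bound of Theorem~\ref{thm:Joh_lp_2}) or harmonic/SDP bounds on the short surviving list of pairs---is exactly the scheme you describe and the one the paper summarizes in Section~\ref{sec:numerical2}. Your side observations (the edge case $w=3$, $n=6$, where $2n>\binom{n-1}{2}$ and a direct LP computation is needed, and the failure of the method for $w=6$, $25\le n\le 34$, which is why item (4) splits into two sub-ranges) are likewise consistent with the paper's statement, so this is the same argument, correctly reconstructed.
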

These results supplement the LP proof of Theorem~\ref{thm:Johnson2} by lowering the smallest value of $n$
for which the claims in the theorem apply. For $w=4,5,6$ the added lengths are $9\le n\le 11, 12\le n\le 17,$ and
$15\le n\le 24$, respectively.

Similarly, for $3$-codes we have the following statement due to \cite{MN11} except for the case $(n, w) = (12, 5)$ which is new.

\begin{proposition}
\label{prop:3_dis_equal_J}
$A(\cJ^{n, w}, 3) = \binom{n-w+3}{3}$ if $n$ and $w$ satisfy one of the following conditions:
\begin{enumerate}
    \item
    $w=4$ and $11 \leq n \leq {58}$;
    \item
    $w=5$ and $12 \leq n \leq {58}$;
    \item
    $w=6$ and $16\le n\le {58}$;
    \item
    $w=7$ and $20\le n\le {58}$;
    \item
    $w=8$ and $25 \leq n \leq {58}$.
\end{enumerate}
\end{proposition}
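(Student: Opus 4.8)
The plan is to prove matching lower and upper bounds for each pair $(n,w)$ in the stated ranges. The lower bound $A(\cJ^{n,w},3)\ge\binom{n-w+3}{3}$ is immediate everywhere, since Proposition~\ref{prop:MN_J2} supplies an explicit $3$-code of exactly this size (take all $w$-sets containing a fixed $(w-3)$-set, whose distance set is $\{1,2,3\}$). Thus the whole content is the reverse inequality $A(\cJ^{n,w},3)\le\binom{n-w+3}{3}$, which I would establish separately for each of the finitely many admissible pairs; since $4\le w\le 8$ and $n\le 58$, this reduces to a finite verification.

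For the upper bound the main device is the integrality restriction of Theorem~\ref{thm:LRS_J} combined with the Delsarte linear program of Theorem~\ref{thm:del}(b). Here $N(\cJ^{n,w},3)=\binom n2$, so any $3$-code $C$ with $|C|\ge 2\binom n2$ must have a distance triple $\{d_1,d_2,d_3\}\subseteq\{1,\dots,w\}$ for which every $K_i$ is an integer of bounded magnitude; this prunes the candidate triples down to a short explicit list. Whenever a pair $(n,w)$ satisfies $2\binom n2\le\binom{n-w+3}{3}$, any triple failing the integrality test forces $|C|<2\binom n2\le\binom{n-w+3}{3}$ automatically, so only the conforming triples survive, and for each of them I would evaluate the optimum of $\mathrm{LP}_J(\sD)$ (or, where it applies, the closed-form estimate of Theorem~\ref{thm:Joh_lp_s}) and check that it does not exceed $\binom{n-w+3}{3}$. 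A direct computation shows that $2\binom n2\le\binom{n-w+3}{3}$ holds throughout the ranges for $w=4,6,7,8$ and for $w=5,\ n\ge 13$, so this argument disposes of every pair except one.

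The single obstruction is the new case $(n,w)=(12,5)$, where $2\binom{12}{2}=132>120=\binom{10}{3}$. Here the integrality criterion no longer eliminates the non-conforming triples: for them it only yields $|C|\le 131$, which is weaker than the target $120$. The remedy is to bound \emph{every} distance triple of $\cJ^{12,5}$ directly (there are only $\binom{5}{3}=10$ of them), and for the triples on which the linear program is not tight enough this requires the stronger Schrijver semidefinite bound (Theorem~\ref{thm:sdp_Ham} and its Johnson analog recalled in Appendix~\ref{sec:SDPbounds}). I expect this case to be the crux of the proof: one must certify numerically that the SDP value is at most $120$ for each admissible triple $\{d_1,d_2,d_3\}\subseteq\{1,\dots,5\}$, which together with the lower bound gives $A(\cJ^{12,5},3)=\binom{10}{3}$ and completes the proposition.
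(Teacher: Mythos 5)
Your proposal is correct and follows essentially the same route as the paper: the lower bound comes from the construction in Proposition~\ref{prop:MN_J2}, and the upper bound is a finite verification combining the integrality pruning of Theorem~\ref{thm:LRS_J} (valid exactly because $2\binom{n}{2}\le\binom{n-w+3}{3}$ throughout the stated ranges except at $(n,w)=(12,5)$) with the Delsarte linear program of Theorem~\ref{thm:del}(b) on the surviving triples, which is precisely how the paper proceeds, inheriting most cases from \cite{MN11} and adding its own LP computations for $(12,5)$ and for the extension of the range to $n\le 58$. The only deviation is your anticipated SDP fallback at $(12,5)$: the paper reports that the LP bound alone suffices to rule out all distance triples there (including the non-conforming ones for which \cite{MN11} had only the trivial bound $2\binom{12}{2}-1=131$), so Schrijver's semidefinite bound, while available and at least as strong, turns out to be unnecessary for this proposition.
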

As above, Part (a) is completely covered by Cor~\ref{cor:ww-1}. 
For $w=5,6,7$ these results add a number of small values of $n$ to the LP proofs of Theorem ~\ref{thm:Johnson3}: namely, the added lengths are $12\le n\le 20, 16\le n\le 30$, and $20\le n\le 49$, respectively. For $w=8$ the
LP argument does not yield any results, so part (4) above remains the state of the art.

To obtain the result for the new pair $(12,5)$ we sharpened the calculation of \cite{MN11} as follows. Recall that the generalized LRS theorem (Theorem~\ref{thm:LRS_J}) gives necessary integrality conditions for the distances of an $s$-code of sufficiently large size.  For the distances that satisfy these conditions, the authors of \cite{MN11} used the LP and harmonic bounds, arriving at their results. At the same time, for the other cases, i.e., when the conditions are
not satisfied, they used a trivial upper bound $2N(\cJ^{n,w},s)-1$ on the size of the code. We examined all the possible distance sets when the  $2N(\cJ^{n,w},s)-1$ is larger than $\binom{n-w+s}{s}$ in~Proposition~\ref{prop:MN_J2} and managed to rule them out using the LP bound. Moreover, we observed that the LP bound
applies for $n\le 58$ while \cite{MN11} has $n=50$ as the upper limit in their
statement.

Using the same approach for $s=4$, we managed to add two new sets of parameters $(n, w) = (15, 6), (16, 6)$ to the results of \cite{MN11}. These results are collected in the next statement.
\begin{proposition} 
\label{prop:4_dis_equal_J}
{$A(\cJ^{n, w}, 4) = \binom{n-w+4}{4}$ if $n$ and $w$ satisfy one of the following conditions:
\begin{enumerate}
  \item $w=5$ and $15\le n\le 70$;
  \item $w=6$ and $15\le n\le70$;
  \item $w=7$ and $20\le n\le 70$;
  \item $w=8$ and $25\le n\le 70$;
  \item $w=9$ and $30\le n\le 70$;
  \item $w=10$ and $35\le n\le 40$ or $48\le n\le 70$;
  \item $w=11$ and $60\le n\le 70$.
\end{enumerate}}
\end{proposition}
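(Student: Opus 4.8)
The plan is to establish the matching lower and upper bounds separately. The lower bound $A(\cJ^{n,w},4)\ge\binom{n-w+4}{4}$ is immediate from Proposition~\ref{prop:MN_J2} (the Erd\H{o}s-Ko-Rado construction with $s=4$), which is valid whenever $n\ge w+4$ and hence throughout the stated ranges. The entire difficulty lies in the matching upper bound $A(\cJ^{n,w},4)\le\binom{n-w+4}{4}$, which I would establish by an exhaustive, computer-assisted search over admissible distance sets, carried out separately for each pair $(n,w)$ appearing in the statement.

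First I would invoke the generalized Larman-Rogers-Seidel conditions of Theorem~\ref{thm:LRS_J}, specialized to $s=4$, so that $N(\cJ^{n,w},4)=\binom{n}{3}$. Any $4$-code $C$ with $|C|\ge 2\binom{n}{3}$ must have a distance set $\{d_1,d_2,d_3,d_4\}$ for which every quantity $K_i=\prod_{j\ne i}\frac{d_j}{d_j-d_i}$ is an integer of bounded modulus; this reduces the candidate $4$-tuples drawn from $\{1,\dots,w\}$ to a short, explicitly computable list. For each surviving tuple I would solve Delsarte's linear program $\text{LP}_J(\sD)$ of Theorem~\ref{thm:del}(b), and in the cases where the LP optimum still exceeds the target I would fall back on Schrijver's stronger semidefinite bound (whose statement is recorded in Appendix~\ref{sec:SDPbounds}), checking numerically that the optimum does not exceed $\binom{n-w+4}{4}$.

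The complementary cases are the distance tuples that fail the integrality conditions of Theorem~\ref{thm:LRS_J}, for which the theorem supplies only the trivial bound $|C|\le 2N(\cJ^{n,w},4)-1=2\binom{n}{3}-1$. Mirroring the sharpening described above for the pair $(12,5)$ in the $s=3$ setting, I would enumerate exactly those triples $(n,w,\sD)$ for which $2\binom{n}{3}-1>\binom{n-w+4}{4}$ --- the only ones that could threaten the claimed equality --- and dispatch each by a direct LP or SDP computation. Combining the two regimes gives $|C|\le\binom{n-w+4}{4}$ for every admissible distance set, which together with the lower bound yields the proposition.

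The main obstacle I anticipate is computational rather than conceptual: for $s=4$ the number of distance $4$-tuples in $\{1,\dots,w\}$ grows rapidly with $w$, and although the Larman-Rogers-Seidel conditions prune most of them, the pruning is weakest precisely at the bottom of each $n$-range, where $2\binom{n}{3}$ is comparable to $\binom{n-w+4}{4}$ and many individual LP or SDP instances must be solved. The gap at $41\le n\le 47$ for $w=10$ and the late onset $n\ge 60$ for $w=11$ point to parameter values where neither the LP nor the available SDP relaxation is tight enough to match the construction, so the delicate point will be certifying, for each borderline $(n,w)$, that the relaxation value equals $\binom{n-w+4}{4}$ exactly.
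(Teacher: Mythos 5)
Your proposal is correct and follows essentially the same route as the paper: the paper obtains these values computer-assisted, combining the earlier calculations of \cite{MN11} (which already use the LRS integrality pruning of Theorem~\ref{thm:LRS_J} followed by LP/harmonic bounds) with new LP and SDP computations, and the new pairs $(15,6)$, $(16,6)$ come precisely from the sharpening you describe --- ruling out by the LP bound those distance sets that fail the integrality conditions but for which the trivial bound $2N(\cJ^{n,w},4)-1$ exceeds $\binom{n-w+4}{4}$ --- while Proposition~\ref{prop:MN_J2} supplies the matching lower bound.
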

For $w=6$ these results augment our LP proof in Theorem~\ref{thm:Johnson4} to 
$n$ in the segment $15\le n\le 29$, and for $n=7$ they contribute the values $20\le n\le 44.$ For larger $w$ these results remain the best known.

In solving LP problems we relied on Mathematica, while for SDP problems we
used MATLAB with CVX toolbox \cite{GB08},\cite{GB14} and the MOSEK solver. All of our source codes can be found on github:
\begin{center}
    \url{https://github.com/PinChiehTseng/s-distance-set}.
\end{center}

\section*{Acknowledgement}
We would like to thank Hiroshi Nozaki for helpful discussions.

AB was supported by NSF grant CCF2104489 and NSF-BSF grant CCF2110113. AG was supported by NSF grant DMS-2054536. PCT and CYL were supported by the Ministry of Science and Technology (MOST) in Taiwan under Grant MOST110-2628-E-A49-007, MOST111-2119-M-A49-004, and MOST111-2119-M-001-002. WHY was supported by MOST under Grant109-2628-M-008-002-MY4.

\newcommand{\etalchar}[1]{$^{#1}$}

\eject
\setcounter{page}{1}
\newgeometry{top=1.2in,bottom=1.2in,left=1.5in,right=1.5in}
{\small\appendix
\section{}

\renewcommand{\thefootnote}{\arabic{footnote}}
	\setcounter{footnote}{0}		
		{\renewcommand{\thefootnote}{}\footnotetext{
			\vspace*{-.15in}

			\rmark{\bf Note:} The material in Appendices A and B serves as supplementary data for the main text of the paper. It will appear in the preprint
version of this manuscript, but is not intended to be a part of the published journal paper.}}
	\renewcommand{\thefootnote}{\arabic{footnote}}

\vspace*{,1in}
In this appendix we list the expressions that support proofs of the statements of Theorems \ref{thm:Johnson2} to \ref{thm:Johnson4}.

\subsection{The case \texorpdfstring{$w=4, s=2$}.}\label{App:4,2}\hfill\\

\begin{enumerate}[(i)]
\item $\sD=\{1,2\}$

 \begin{gather*}
  E_1=\frac{(n-9)(n-2)}{4(n-4)(n-5)}, \;E_2=\frac{n-3}{(n-4)(n-5)}, \;
  E_3=\frac{n-6}{2(n-4)^2(n-5)},\\
  |C| \leq \frac{(n-2)(n-3)}{2} = \binom{n-2}{2}.
  \end{gather*}
This bound holds true for $n\ge 9$.

\item $\sD=\{2,3\}$

\begin{gather*}
E_1=\frac{(n-1)(n-11)}{6 (n-4) (n-5)}, E_2=\frac{(n-2)(2n-15)}{2 (n-4)(n-5)(n-6)}, E_3=\frac{3 n^2-41 n+142}{4 (n-4) (n-5)^2 (n-6)},\\
|C|\le \frac{(n-2) (n-1)\left(2 n^2-28 n+93\right)}{3\left(3 n^2-41 n+142\right)}\le \binom{n-2}{2}.
\end{gather*}
This bound holds true for $n\ge 11.$

\item $\sD=\{2,4\}$

\begin{gather*}
E_1=\frac{n}{2(n-4)}, E_2=\frac{(n-2)(n-12)}{(n-4)(n-5)(n-6)}, E_3=\frac{4(n-3)(n-8)}{ (n-6) (n-5) (n-4)^2},\\
|C|\le \frac{n(n-2)}{8}\le \binom{n-2}2.
\end{gather*}
This bound holds true for $n\ge 12.$

\item $\sD=\{3,4\}$

\begin{gather*}
E_1=\frac{n}{4(n-4)}, E_2=\frac{3(n-1)}{2(n-4)(n-5)}, E_3=\frac{3 (n-2)}{(n-4)^2 (n-5)},\\
|C|\le \frac{n(n-1)}{12}\le \binom{n-2}2.
\end{gather*}
This bound holds true for $n\ge 6.$

\end{enumerate}
  
\subsection{The case \texorpdfstring{$w=5,s=2$}.}  \label{App:5,2}\hfill\\

\begin{enumerate}[(i)]
\item $\sD=\{1,2\}$

\begin{gather*}
 E_1=\frac{(n-3)(n-12)}{5(n-5) (n-6)},E_2=\frac{n-4}{ (n-5)(n-6)},E_3=\frac{2 (n-8)}{5 (n-5)^2 (n-6)},\\
 |C|\le\frac{1}{2} (n-4) (n-3)=\binom{n-3}{2}.
\end{gather*}
This bound holds true for $n\ge 12$.

\item $\sD=\{2,3\}$

\begin{gather*}
E_1=\frac{(n-2)(n^2-25n+138)}{10 (n-5)(n-6) (n-7)}, E_2=\frac{4(n-3)(n-10)}{5(n-5)(n-6)(n-7)}, \\
E_3=\frac{3 \left(3 n^2-55 n+258\right)}{25 (n-5)^2(n-6) (n-7) },\\
|C|\le\frac{(n-3) (n-2) \left(5 n^2-95 n+408\right)}{6 \left(3 n^2-55 n+258\right)}\le\binom{n-3}2.
\end{gather*}
This bound holds true for $n\ge17$.

\item $\sD=\{2,4\}$

\begin{gather*}
E_1=\frac{(n-1)(3n-38)}{10 (n-5)(n-6)},E_2=\frac{2(n-3)(2n^2-45n+244)}{5(n-5)(n-6)(n-7)(n-8)},\\
E_3=\frac{4 \left(8 n^3-236 n^2+2265 n-7062\right)}{25(n-5)(n-6)^2(n-7) (n-8) },\\
|C|\le\frac{(n-3) (n-1) \left(15 n^3-420 n^2+3764 n-10832\right)}{8 \left(8 n^3-236 n^2+2265 n-7062\right)}\le\binom{n-3}2.
\end{gather*}
This bound holds true for $n\ge 14.$

\item $\sD=\{3,4\}$

\begin{gather*}
E_1=\frac{(n-1) (n-18)}{10(n-5) (n-6)},E_2=\frac{9(n-2)(n-11) }{10 (n-5)(n-6) (n-7)},\\
E_3=\frac{6 (n-4) \left(3 n^2-59 n+306\right)}{25 (n-7) (n-6)^2 (n-5)^2},\\
|C|\le\frac{(n-2) (n-1) \left(5 n^2-105 n+486\right)}{12 \left(3 n^2-59 n+306\right)}\le\binom{n-3}2.
\end{gather*}
This bound holds true for $n\ge 18.$

\item $\sD=\{4,5\}$
\begin{gather*}
E_1=\frac{n}{5(n-5)},E_2=\frac{8(n-1)}{5 (n-5)(n-6)},E_3=\frac{4 (n-2)}{(n-5)^2(n-6) },\\
|C|\le\frac{1}{20} (n-1) n\le\binom{n-3}2.
\end{gather*}
This bound holds true for $n\ge 7.$

\end{enumerate}

\subsection{The case \texorpdfstring{$w=6,s=2$}.}\label{App:6,2} \hfill\\

\begin{enumerate}[(i)]
\item $\sD = \{1, 2\}$

\begin{gather*}
E_1=\frac{(n-4)(n-15)}{6(n-6) (n-7)},E_2=\frac{n-5}{ (n-6)(n-7)},E_3=\frac{n-10}{3 (n-6)^2(n-7)},\\
|C| \leq \frac{(n-4)(n-5)}{2} = \binom{n-4}{2}.
\end{gather*}
This bound holds true for $n\ge 15$.

\item $\sD = \{2, 3\}$

\begin{gather*}
E_1=\frac{(n-3)\left(n^2-33n+236\right)}{15(n-6) (n-7)(n-8)},E_2=\frac{(n-4)(2n-25)}{3 (n-6) (n-7)(n-8)},E_3=\frac{n^2-23 n+136}{5 (n-6)^2 (n-7)^2},\\
|C| \leq \frac{(n-3)(n-4)(n^2-24n+125)}{3(n^2-23n+136)}\leq \binom{n-4}{2}.
\end{gather*}
This bound holds true for $n\ge23.$

\item $\sD = \{2, 4\}$

\begin{gather*}
E_1=\frac{(n-2)\left(n^2-27 n+172\right)}{5 (n-6) (n-7) (n-8) },E_2=\frac{2(n-4)(n-11)(n-15)}{3(n-6)(n-7)(n-8)(n-9)},\\E_3=\frac{8 (n-12) \left(n^2-27 n+167\right)}{15 (n-6)^2(n-7) (n-8) (n-9) },\\
|C| \leq \frac{(n-2)(n-4)(3n^3-104n^2+1145n-4020)}{8(n-12)(n^2-27n+167)} \leq \binom{n-4}{2}.
\end{gather*}
This bound holds true for $n\ge35.$

\item $ \sD = \{3, 4\}$

\begin{gather*}
E_1=\frac{(n-2)(n-11)(n-28)}{20(n-6) (n-7) (n-8)},E_2=\frac{(n-3)\left(3n^2-75n+452\right)}{5 (n-6)(n-7) (n-8)(n-9)},\\E_3=\frac{2 \left(3 n^4-136 n^3+2334 n^2-17831 n+50916\right)}{25 (n-9) (n-8)^2 (n-7)^2 (n-6)},\\
|C| \leq \frac{(n-2)(n-3)(5n^4-240n^3+4019n^2-28528n+73488)}{8(3n^4-136n^3+2334n^2-17831n+50916)} \leq \binom{n-4}{2}.
\end{gather*}
This bound holds true for $n\ge28$.

\item $\sD = \{3, 6\}$

\begin{gather*}
E_1=\frac{n}{2(n-6)},E_2=\frac{3(n-3)\left(n^2-33 n+316\right)}{5 (n-6)(n-7)(n-8)(n-9)},\\
E_3=\frac{18 (n-4) \left(n^2-32 n+237\right)}{5(n-6)^2 (n-7) (n-8) (n-9) },\\
|C| \leq \frac{n(n-3)(5n^2-109n+636)}{36(n^2-32n+237)} \leq \binom{n-4}{2}.
\end{gather*}
This bound holds true for $n\ge 27.$

\item $\sD = \{4, 5\}$

\begin{gather*}
E_1=\frac{(n-1) (n-27)}{15(n-6) (n-7)},E_2=\frac{2(n-2)(2n-31) }{5 (n-6)(n-7)(n-8)}, E_3=\frac{2 (n-4) \left(n^2-27 n+200\right)}{3 (n-6)^2(n-7)^2(n-8)  },\\
|C| \leq \frac{(n-1)(n-2)(n^2-30n+181)}{10(n^2-27n+200)} \leq \binom{n-4}{2}.
\end{gather*}
This bound holds true for $n\ge27.$

\item $\sD = \{4, 6\}$

\begin{gather*}
E_1=\frac{n}{3(n-6)},E_2=\frac{4(n-2)(n-28)}{5 (n-6)(n-7)(n-8)}, E_3=\frac{8 (n-3) (3 n-56)}{5 (n-6)^2(n-7) (n-8) },\\
|C| \leq \frac{n(n-2)(5n-68)}{24(3n-56)} \leq \binom{n-4}{2}.
\end{gather*}
This bound holds true for $n\ge28.$

\item $\sD = \{5, 6\}$

\begin{gather*}
E_1=\frac{n}{6(n-6)},E_2=\frac{5(n-1)}{3 (n-6)(n-7)},E_3=\frac{5 (n-2)}{(n-6)^2(n-7) },\\
|C| \leq \frac{n(n-1)}{30} \leq \binom{n-4}{2}.
\end{gather*}
This bound holds true for $n\ge8.$

\end{enumerate}

\subsection{The case \texorpdfstring{$w=4,s=3$}.}  \label{App:4,3}\hfill\\
\begin{enumerate}[(i)]
\item $\sD=\{1,2,3\}$
\begin{gather*}
  E_1=-\frac{(n-1)(n-2)\left(n^2-14n+51\right)}{24(n-4)(n-5)^2(n-6)}, \;E_2=-\frac{(n-1)(n-3)(n-8)}{6(n-4)^2(n-5)(n-6)},\\
  \;E_3=-\frac{(n-2)(n-3)}{2(n-4)^2(n-5)^2},\;
  E_4=-\frac{1}{4(n-4)^{2}(n-5)}, \\
  |C| \leq \frac{(n-1)(n-2)(n-3)}{6} = \binom{n-1}{3}.
\end{gather*}
This bound holds true for $n\geq 8$.

\item $\sD=\{1,3,4\}$

\begin{gather*}
E_1=-\frac{n(n-1) (n-2)}{8(n-4)^2(n-5)}, E_2=-\frac{n(n-3) (n-11)}{4(n-4)(n-5)(n-6) (n-7)},\\ E_3=-\frac{3(n-1)(n-3)(n-8)}{2 (n-4)^2(n-5)(n-6)(n-7)},
E_4=-\frac{3(n-2)}{(n-4)^2(n-5)(n-7)},\\
|C|\le \frac{n(n-1)(n-3)(n-8)}{24(n-6)}\le \binom{n-1}{3}.
\end{gather*}
This bound holds true for $n\ge 11.$

\item $\sD=\{2,3,4\}$

\begin{gather*}
E_1=-\frac{n(n-1)(n-2)}{24(n-4)^2(n-5)}, E_2=-\frac{n(n-2) (n-3)}{4(n-4)^2(n-5) (n-6)},\\ E_3=-\frac{3(n-1)(n-2)}{4(n-4)(n-5)^2(n-6)},
E_4=-\frac{(n-2)(n-3)}{(n-4)^2(n-5)^2(n-6)},\\
|C|\le \frac{n(n-1)(n-2)}{24}\le \binom{n-1}{3}. 
\end{gather*}
This bound holds true for $n\ge 7.$

\end{enumerate}

\subsection{The case \texorpdfstring{$w=5,s=3$}.}  \label{App:5,3}\hfill\\
\begin{enumerate}[(i)]
\item $\sD=\{1,2,3\}$

\begin{gather*}
E_1=-\frac{(n-2)(n-3)\left(n^2-19n+96\right)}{50(n-5)^2 (n-6)(n-7)}, E_2=-\frac{(n-2)(n-4)(n-12)}{10(n-5)^2(n-6)^2},\\ E_3=-\frac{2(n-3)(n-4)(n-8)}{5(n-5)^2(n-6)^2(n-7)},
E_4=-\frac{3(n-8)}{25(n-5)^3(n-6)},\\
|C|\le \frac{(n-2)(n-3)(n-4)}{6}\le \binom{n-2}{3}.
\end{gather*}
This bound holds true for $n\ge 12.$

\item $\sD=\{1,3,4\}$

\begin{gather*}
E_1=-\frac{(n-1)(n-2)(n-4)(n-9)(n-12)}{25(n-5)^2 (n-6)^2 (n-7)}, E_2=-\frac{(n-1)(n-4)\left(n^2-29n+174\right)}{10(n-5)^2(n-6)(n-7)(n-8)},\\ E_3=-\frac{3(n-2)(n-4)\left(3n^2-55n+246\right)}{10(n-5)^2(n-6)^2(n-7)(n-8)},
E_4=-\frac{6(n-4)\left(3n^2-53n+244\right)}{25(n-5)^3(n-6)(n-7)(n-8)},\\
|C| \leq \frac{\left(n - 1\right) \left(n - 2\right) \left(n - 4\right) \left(2 n^{2} - 43 n + 219\right)}{12 \left(3 n^{2} - 53 n + 244\right)} \le \binom{n-2}{3}.
\end{gather*}
This bound holds true for $n\ge 21.$

\item $\sD=\{2,3,4\}$

\begin{gather*}
E_1=-\frac{(n-1)(n-2)(n-4)\left(n^2-21n+126\right)}{100(n-5)^2 (n-6)^2 (n-7)}, \\
E_2=-\frac{2(n-1)(n-3)(n-10)(n-12)}{25(n-5)(n-6)^2(n-7)(n-8)},\\ 
E_3=-\frac{3(n-2)(n-3)\left(3n^2-53n+236\right)}{25(n-5)^2(n-6)(n-7)^2(n-8)},\\
E_4=-\frac{6(n-4)\left(4n^3-99n^2+819n-2274\right)}{125(n-5)^2(n-6)^2(n-7)^2(n-8)},\\
|C|\le \frac{(n-1)(n-2)(n-3)\left(5n^3-125n^2+1050n-2904\right)}{24\left(4n^3-99n^2+819n-2274\right)}\le \binom{n-2}{3}.
\end{gather*}
This bound holds true for $n\ge 12.$

\item $\sD=\{2,3,5\}$

\begin{gather*}
E_1=-\frac{n(n-2)(n-3)(2n-35)}{50(n-5)^2 (n-6) (n-7)}, E_2=-\frac{2n(n-3)(n-4)(4n-47)}{25(n-5)^2(n-6)(n-7)(n-8)},\\ E_3=-\frac{3(n-2)(n-3)\left(3n^2-85n+508\right)}{25(n-5)^2(n-6)(n-7)^2(n-8)},
E_4=-\frac{3(n-3)(n-4)\left(3n^2-55n+254\right)}{5(n-5)^3(n-6)(n-7)^2(n-8)},\\
|C|\le \frac{n(n-2)(n-3)\left(2n^2-45n+229\right)}{30\left(3n^2-55n+254\right)}\le \binom{n-2}{3}.
\end{gather*}
This bound holds true for $n\ge 20.$

\item $\sD=\{3,4,5\}$

\begin{gather*}
E_1=-\frac{n(n-1)(n-2)}{50(n-5)^2 (n-6)}, E_2=-\frac{9n(n-2)(n-3)}{50(n-5)^2(n-6)(n-7)},\\ E_3=-\frac{18(n-1)(n-2)(n-4)}{25(n-5)^2(n-6)^2(n-7)},
E_4=-\frac{6(n-2)(n-3)(n-4)}{5(n-5)^3(n-6)^2(n-7)},\\
|C|\le \frac{n(n-1)(n-2)}{60}\le \binom{n-2}{3}.
\end{gather*}
This bound holds true for $n\ge 8.$

\end{enumerate}

\subsection{The case \texorpdfstring{$w=6,s=3$}.}  \label{App:6,3}\hfill\\
\begin{enumerate}[(i)]
\item $\sD=\{1,2,3\}$

\begin{gather*}
E_1=-\frac{(n-3)(n-4)\left(n^2-24n+155\right)}{90(n-6)^2 (n-7)^2}, E_2=-\frac{(n-3)(n-5)(n-9)(n-16)}{15(n-6)^2(n-7)^2(n-8)},\\ E_3=-\frac{(n-4)(n-5)(n-10)}{3(n-6)^2(n-7)^2(n-8)},
E_4=-\frac{(n-9)(n-10)}{15(n-6)^3(n-7)^2},\\
|C|\le \frac{(n-3)(n-4)(n-5)}{6}= \binom{n-3}{3}.
\end{gather*}
This bound holds true for $n\ge 16.$

\item $\sD=\{1,3,4\}$

\begin{gather*}
E_1=-\frac{(n-2)(n-3)\left(5n^4-240n^3+4187n^2-31704n+88304\right)}{300(n-6) (n-7)^2(n-8)^2(n-9)},\\ E_2=-\frac{(n-2)(n-5)\left(n^3-50n^2+685n-2832\right)}{20(n-6)^2(n-7)^2(n-8)(n-9)},\\ E_3=-\frac{(n-3)(n-5)\left(3n^2-68n+368\right)}{5(n-6)^2(n-7)^2(n-8)^2},
E_4=-\frac{2\left(3n^2-70n+433\right)}{25(n-6)^2(n-7)^2(n-8)},\\
|C|\le \frac{(n-2)(n-3)(n-5)\left(5n^2-145n+932\right)}{24\left(3n^2-70n+433\right)} \leq \binom{n-3}{3}.
\end{gather*}
This bound holds true for $n\ge 31.$

\item $\sD=\{2,3,4\}$

\begin{gather*}
E_1=-\frac{(n-2)(n-3)\left(n^4-48n^3+895n^2-7464n+23152\right)}{300(n-6) (n-7)^2(n-8)^2(n-9)},\\ E_2=-\frac{(n-2)(n-4)(n-12)\left(n^2-28n+175\right)}{30(n-6)^2(n-7)(n-8)^2(n-9)},\\ 
E_3=-\frac{(n-3)(n-4)\left(3n^2-67n+380\right)}{15(n-6)^2(n-7)^2(n-8)(n-9)},
E_4=-\frac{2(n-12)\left(2n^2-39n+199\right)}{75(n-6)^2(n-7)^2(n-8)(n-9)},\\
|C|\le \frac{(n-2)(n-3)(n-4)\left(n^3-32n^2+347n-1220\right)}{8(n-12)\left(2n^2-39n+199\right)} \leq \binom{n-3}{3}.
\end{gather*}
This bound holds true for $n\ge 19.$

\item $\sD=\{2,3,5\}$

\begin{gather*}
E_1=-\frac{(n-1)(n-3)(n-5)\left(3n^3-140n^2+2081n-9876\right)}{225(n-6)^2(n-7)^2(n-8)(n-9)},\\ E_2=-\frac{(n-1)(n-4)\left(6n^3-238n^2+3057n-12785\right)}{45(n-6)(n-7)^2(n-8)(n-9)(n-10)},\\ E_3=-\frac{(n-3)(n-4)\left(n^3-43n^2+586n-2580\right)}{5(n-6)^2(n-7)^2(n-8)(n-9)(n-10)},\\
E_4=-\frac{(n-5)\left(5n^4-230n^3+3909n^2-29144n+80432\right)}{15(n-6)^2(n-7)^3(n-8)(n-9)(n-10)},\\
|C|\le \frac{(n-1)(n-3)(n-4)\left(3n^4-149n^3+2660n^2-20293n+55840\right)}{15\left(5n^4-230n^3+3909n^2-29144n+80432\right)} \leq \binom{n-3}{3}.
\end{gather*}
This bound holds true for $n\ge 22.$

\item $\sD=\{2,4,6\}$

\begin{gather*}
E_1=-\frac{n(n-2)(n-3)(n-20)}{15(n-6)^2(n-7)(n-8)}, \\
E_2=-\frac{2n(n-4)(n-5)\left(n^2-31n+246\right)}{9(n-6)^2(n-7)(n-8)(n-9)(n-10)},\\ 
E_3=-\frac{8(n-2)(n-4)\left(n^3-51n^2+809n-4020\right)}{15(n-6)^2(n-7)(n-8)^2(n-9)(n-10)},\\
E_4=-\frac{16(n-3)(n-5)(n-12)\left(3n^2-86n+536\right)}{15(n-6)^3(n-7)(n-8)^2(n-9)(n-10)},\\
|C|\le \frac{n(n-2)(n-4)}{48} \leq \binom{n-3}{3}.
\end{gather*}
This bound holds true for $n\ge 26.$

\item $\sD=\{3,4,5\}$

\begin{gather*}
E_1=-\frac{(n-1)(n-2)(n-4)\left(n^2-30n+281\right)}{300(n-6)^2(n-7)^2(n-8)},\\ 
E_2=-\frac{(n-1)(n-3)(n-5)\left(3n^2-93n+724\right)}{75(n-6)^2(n-7)^2(n-8)(n-9)},\\ 
E_3=-\frac{2(n-2)(n-3)\left(3n^2-71n+436\right)}{25(n-6)(n-7)^2(n-8)^2(n-9)},\\
E_4=-\frac{2(n-4)(n-5)(n-13)\left(n^2-20n+111\right)}{15(n-6)^2(n-7)^3(n-8)^2(n-9)},\\
|C|\le \frac{(n-1)(n-2)(n-3)\left(n^3-34n^2+401n-1524\right)}{40(n-13)\left(n^2-20n+111\right)} \leq \binom{n-3}{3}.
\end{gather*}
This bound holds true for $n\ge 15.$

\item $\sD=\{3,4,6\}$

\begin{gather*}
E_1=-\frac{n(n-2)(n-3)(n-28)}{60(n-6)^2(n-7)(n-8)}, E_2=-\frac{n(n-3)(n-4)(n-17)}{5(n-6)^2(n-7)(n-8)(n-9)},\\ 
E_3=-\frac{2(n-2)(n-3)(n-28)(3n-37)}{25(n-6)(n-7)^2(n-8)^2(n-9)},E_4=-\frac{6(n-3)(n-4)\left(6n^2-155n+1036\right)}{25(n-6)^2(n-7)^2(n-8)^2(n-9)},\\
|C|\le \frac{n(n-2)(n-3)\left(5n^2-165n+1132\right)}{72\left(6n^2-155n+1036\right)} \leq \binom{n-3}{3}.
\end{gather*}
This bound holds true for $n\ge 28.$

\item $\sD=\{3,5,6\}$

\begin{gather*}
E_1=-\frac{n(n-1)(n-2)}{30(n-6)^2(n-7)},E_2=-\frac{n(n-3)(n-4)(n-29)}{10(n-6)^2(n-7)(n-8)(n-9)},\\ 
E_3=-\frac{(n-1)(n-3)(n-5)(n-20)}{(n-6)^2(n-7)^2(n-8)(n-9)}, E_4=-\frac{3(n-2)(n-4)(n-5)(n-15)}{(n-6)^3(n-7)^2(n-8)(n-9)},\\
|C|\le \frac{n(n-1)(n-3)}{90} \leq \binom{n-3}{3}.
\end{gather*}
This bound holds true for $n\ge 29.$

\item $\sD=\{4,5,6\}$

\begin{gather*}
E_1=\frac{n(n-1)(n-2)}{90(n-6)^2(n-7)}, E_2=\frac{2n(n-2)(n-3)}{15(n-6)^2(n-7)(n-8)},\\ E_3=\frac{2(n-1)(n-2)(n-4)}{3(n-6)^2(n-7)^2(n-8)},
E_4=-\frac{4(n-2)(n-3)(n-4)}{3(n-6)^3(n-7)^2(n-8)},\\
|C|\le \frac{n(n-1)(n-2)}{120} \leq \binom{n-3}{3}.
\end{gather*}
This bound holds true for $n\ge 9.$

\end{enumerate}

\subsection{The case \texorpdfstring{$w=7,s=3$}.}  \label{App:7,3}\hfill\\

\begin{enumerate}[(i)]
\item $\sD=\{1,2,3\}$

\begin{gather*}
E_1=\frac{(n-4)(n-5)(n-10)\left(n^2-29n+228\right)}{147(n-7)^2(n-8)^2(n-9)}, E_2=\frac{(n-4)(n-6)(n-11)(n-20)}{21(n-7)^2(n-8)^2(n-9)},\\ 
E_3=\frac{2(n-5)(n-6)(n-12)}{7(n-7)^2(n-8)^2(n-9)},
E_4=-\frac{2(n-10)(n-11)(n-12)}{49(n-7)^3(n-8)^2(n-9)},\\
|C|\le \frac{(n-4)(n-5)(n-6)}{6} = \binom{n-4}{3}.
\end{gather*}
This bound holds true for $n\ge 20.$

\item $\sD=\{1,3,4\}$

\begin{gather*}
E_1=\frac{2(n-3)(n-4)\left(n^3-50n^2+789n-4020\right)}{245(n-7)^2(n-8)(n-9)^2},\\ E_2=\frac{(n-3)(n-6)\left(n^3-64n^2+1075n-5340\right)}{35(n-7)^2(n-8)^2(n-9)^2},\\ 
E_3=\frac{3(n-4)(n-6)(n-11)(n-17)}{7(n-7)^2(n-8)^2(n-9)^2},
E_4=-\frac{24(n-11)\left(n^2-29n+225\right)}{245(n-7)^3(n-8)(n-9)^2},\\
|C|\le \frac{(n-3)(n-4)(n-6)\left(2n^2-73n+555\right)}{24\left(n^2-29n+225\right)} \leq \binom{n-4}{3}.
\end{gather*}
This bound holds true for $n\ge 41.$

\item $\sD=\{2,3,4\}$

\begin{gather*}
E_1=\frac{(n-3)(n-4)\left(n^4-60n^3+1409n^2-14790n+57600\right)}{735(n-7)^2(n-8)(n-9)^2(n-10)},\\ E_2=\frac{4(n-3)(n-5)(n-12)(n-14)(n-25)}{245(n-7)^2(n-8)^2(n-9)(n-10)},\\ 
E_3=\frac{6(n-4)(n-5)\left(n^2-27n+186\right)}{49(n-7)^2(n-8)^2(n-9)^2},
E_4=-\frac{8\left(4n^3-153n^2+1973n-8640\right)}{1715(n-7)^3(n-8)^2(n-9)},\\
|C|\le \frac{(n-3)(n-4)(n-5)\left(7n^3-273n^2+3626n-15360\right)}{24\left(4n^3-153n^2+1973n-8640\right)} \leq \binom{n-4}{3}.
\end{gather*}
This bound holds true for $n\ge 25.$

\item $\sD=\{2,3,5\}$

\begin{gather*}
E_1=\frac{(n-2)(n-4)\left(4n^5-319n^4+9922n^3-150411n^2+1112664n-3218220\right)}{735(n-7)(n-8)^2(n-9)^2(n-10)(n-11)},\\ E_2=\frac{4(n-2)(n-5)\left(4n^4-238n^3+5169n^2-48717n+168372\right)}{245(n-7)^2(n-8)(n-9)^2(n-10)(n-11)},\\ 
E_3=\frac{2(n-4)(n-5)\left(3n^3-146n^2+2305n-11922\right)}{49(n-7)^2(n-8)^2(n-9)^2(n-11)},\\
E_4=-\frac{2\left(15n^4-880n^3+18907n^2-177514n+616824\right)}{343(n-7)^2(n-8)^2(n-9)^2(n-11)},\\
\begin{aligned}
|C| &\le \frac{(n-2)(n-4)(n-5)\left(28n^4-1673n^3+36204n^2-335277 n+1114398\right)}{30\left(15n^4-880n^3+18907n^2-177514n+616824\right)}\\
&\leq \binom{n-4}{3}.
\end{aligned}
\end{gather*}
This bound holds true for $n\ge 23.$

\item $\sD=\{2,4,6\}$

\begin{gather*}
E_1=\frac{(n-1)(n-3)(n-5)\left(15n^3-860n^2+15396n-87184\right)}{735(n-7)^2(n-8)^2(n-9)(n-10)},\\ E_2=\frac{2(n-1)(n-5)\left(2n^4-143n^3+3664n^2-40228n+160800\right)}{49(n-7)(n-8)^2(n-9)(n-10)(n-11)(n-12)},\\ 
E_3=\frac{8(n-3)(n-5)\left(8n^5-660n^4+21005n^3-324540n^2+2446172n-7221360\right)}{245(n-7)^2(n-8)^2(n-9)(n-10)^2(n-11)(n-12)},\\
E_4=-\frac{48(n-5)\left(2n^2-57n+398\right)Q}{1715(n-7)^2(n-8)^3(n-9)(n-10)^2(n-11)(n-12)},\\
\begin{aligned}
|C| &\le \frac{(n-1)(n-3)(n-5)P}{144\left(2n^2-57n+398\right)Q}
\leq \binom{n-4}{3},
\end{aligned}
\end{gather*}
where
\begin{align*}
P &= 105n^6-9380n^5+335496n^4-6191744n^3+62472912n^2-327751616n\\
& \quad +699935232,\\
Q &= 8n^4-504n^3+10997n^2-100866n+332240.
\end{align*}
This bound holds true for $n\ge 29.$

\item $\sD=\{3,4,5\}$

\begin{gather*}
E_1=\frac{(n-2)(n-3)(n-6)\left(n^4-66n^3+1745n^2-20448n+87840\right)}{1225(n-7)^2(n-8)^2(n-9)^2(n-10)},\\ E_2=\frac{3(n-2)(n-4)\left(n^4-68n^3+1669n^2-17642n+68160\right)}{245(n-7)(n-8)^2(n-9)^2(n-10)(n-11)},\\ 
E_3=\frac{8(n-3)(n-4)\left(3n^4-160n^3+3222n^2-28895n+97050\right)}{245(n-7)^2(n-8)(n-9)^2(n-10)^2(n-11)},\\
E_4=-\frac{8(n-6)Q}{1715(n-7)^2(n-8)^2(n-9)^3(n-10)^2(n-11)},\\
\begin{aligned}
|C| &\le \frac{(n-2)(n-3)(n-4)P}{40Q}
\leq \binom{n-4}{3},
\end{aligned}
\end{gather*}
where
\begin{align*}
P &= 7n^6-539n^5+17507n^4-302361n^3+2903826n^2-14643760n +30251400,\\
Q &= 5n^6-375n^5+11786n^4-198771n^3+1896155 n^2-9689568n+20692440.
\end{align*}
This bound holds true for $n\ge 27.$

\item $\sD=\{3,4,6\}$

\begin{gather*}
E_1=\frac{(n-1)(n-3)(n-5)(n-14)(n-20)(n-34)}{245(n-7)^2(n-8)^2(n-9)(n-10)},\\ 
E_2=\frac{3(n-1)(n-4)(n-6)(n-17)\left(n^2-37n+310\right)}{49(n-7)^2(n-8)^2(n-9)(n-10)(n-11)},\\ 
E_3=\frac{24(n-3)(n-4)\left(n^4-70n^3+1749n^2-18740n+73100\right)}{245(n-7)^2(n-8)(n-9)^2(n-10)^2(n-11)},\\
E_4=-\frac{72(n-5)(n-6)Q}{1715(n-7)^3(n-8)^2(n-9)^2(n-10)^2(n-11)},\\
\begin{aligned}
|C| &\le \frac{(n-1)(n-3)(n-4)P}{72Q}
\leq \binom{n-4}{3},
\end{aligned}
\end{gather*}
where
\begin{align*}
P &= 7n^5-560n^4+17017n^3-246980n^2+1720516n-4621280,\\
Q &= 4n^5-291n^4+8382n^3-119717n^2+848126n-2384080.
\end{align*}
This bound holds true for $n\ge 34.$

\item $\sD=\{3,4,7\}$

\begin{gather*}
E_1=\frac{n(n-3)(n-4)\left(3n^2-161n+1930\right)}{245(n-7)^2(n-8)(n-9)(n-10)},\\ 
E_2=\frac{3n(n-4)(n-5)\left(3n^2-103n+910\right)}{49(n-7)^2(n-8)(n-9)(n-10)(n-11)},\\ 
E_3=\frac{24(n-3)(n-4)\left(n^4-70n^3+1974n^2-24815n+112850\right)}{245(n-7)^2(n-8)(n-9)^2(n-10)^2(n-11)},\\
E_4=-\frac{24(n-4)(n-5)\left(n^4-68n^3+1664n^2-17557n+67710\right)}{35(n-7)^3(n-8)(n-9)^2(n-10)^2(n-11)},\\
\begin{aligned}
|C| &\le \frac{n(n-3)(n-4)\left(3n^4-200n^3+4773n^2-48316n+176580\right)}{168\left(n^4-68n^3+1664n^2-17557n+67710\right)} \leq \binom{n-4}{3}.
\end{aligned}
\end{gather*}
This bound holds true for $n\ge 36.$

\item $\sD=\{3,5,6\}$

\begin{gather*}
E_1=\frac{2(n-1)(n-2)(n-4)\left(2n^2-82n+903\right)}{735(n-7)^2(n-8)^2(n-9)},\\ 
E_2=\frac{(n-1)(n-4)(n-6)(n-14)\left(n^2-68n+915\right)}{49(n-7)^2(n-8)^2(n-9)(n-10)(n-11)},\\ 
E_3=\frac{5(n-2)(n-4)(n-14)\left(3n^2-118n+1083\right)}{49(n-7)(n-8)^2(n-9)^2(n-10)(n-11)},\\
E_4=-\frac{6(n-4)(n-6)\left(15n^4-910n^3+20839n^2-212584n+809256\right)}{343(n-7)^2(n-8)^3(n-9)^2(n-10)(n-11)},\\
\begin{aligned}
|C| &\le \frac{(n-1)(n-2)(n-4)\left(28n^4-1827n^3+43176n^2-432251n+1535730\right)}{90\left(15n^4-910n^3+20839n^2-212584n+809256\right)}\\
& \leq \binom{n-4}{3}.
\end{aligned}
\end{gather*}
This bound holds true for $n\ge 50.$

\item $\sD=\{4,5,6\}$

\begin{gather*}
E_1=\frac{(n-1)(n-2)(n-4)\left(n^2-41n+564\right)}{735(n-7)^2(n-8)^2(n-9)},\\ 
E_2=\frac{16(n-1)(n-3)(n-5)\left(n^2-42n+452\right)}{735(n-7)^2(n-8)^2(n-9)(n-10)},\\ 
E_3=\frac{8(n-2)(n-3)(n-6)\left(n^2-31n+258\right)}{49(n-7)^2(n-8)^2(n-9)^2(n-10)},\\
E_4=-\frac{8(n-4)(n-5)(n-6)\left(4n^3-171n^2+2543n-13548\right)}{343(n-7)^3(n-8)^3(n-9)^2(n-10)},\\
\begin{aligned}
|C| &\le \frac{(n-1)(n-2)(n-3)\left(7n^3-315n^2+5096n-25392\right)}{120\left(4n^3-171n^2+2543n-13548\right)} \leq \binom{n-4}{3}.
\end{aligned}
\end{gather*}
This bound holds true for $n\ge 20.$

\item $\sD=\{4,6,7\}$

\begin{gather*}
E_1=\frac{n(n-1)(n-2)}{49(n-7)^2(n-8)},
E_2=\frac{8n(n-3)(n-4)(2n-85)}{245(n-7)^2(n-8)(n-9)(n-10)},\\ 
E_3=\frac{192(n-1)(n-3)(n-5)(n-29)}{245(n-7)^2(n-8)^2(n-9)(n-10)},
E_4=-\frac{24(n-2)(n-4)(n-5)(4n-87)}{35(n-7)^3(n-8)^2(n-9)(n-10)},\\
\begin{aligned}
|C| &\le \frac{n(n-1)(n-3)(5n-94)}{168(4n-87)} \leq \binom{n-4}{3}.
\end{aligned}
\end{gather*}
This bound holds true for $n\ge 43.$

\item $\sD=\{5,6,7\}$

\begin{gather*}
E_1=\frac{n(n-1)(n-2)}{147(n-7)^2(n-8)},
E_2=\frac{5n(n-2)(n-3)}{49(n-7)^2(n-8)(n-9)},\\ 
E_3=\frac{30(n-1)(n-2)(n-4)}{49(n-7)^2(n-8)^2(n-9)},
E_4=-\frac{10(n-2)(n-3)(n-4)}{7(n-7)^3(n-8)^2(n-9)},\\
\begin{aligned}
|C| &\le \frac{n(n-1)(n-2)}{210} \leq \binom{n-4}{3}.
\end{aligned}
\end{gather*}
This bound holds true for $n\ge 10.$

\end{enumerate}

\subsection{The case \texorpdfstring{$w=5,s=4$}.}  \label{App:5,4}\hfill\\
\begin{enumerate}[(i)]
\item $\sD=\{1,2,3,4\}$

\begin{gather*}
E_1=-\frac{(n-1)(n-2)(n-3)(n-4)(n-10)\left(n^2-15n+60\right)}{500(n-5)^2(n-6)^2(n-7)^2(n-8)},\\ E_2=-\frac{(n-1)(n-2)(n-4)^2\left(n^2-17n+78\right)}{100(n-5)^3(n-6)^2(n-7)(n-8)},\\ 
E_3=-\frac{(n-1)(n-3)(n-4)(n-10)}{25(n-5)^2(n-6)^2(n-7)^2},
E_4=-\frac{3(n-2)(n-3)(n-4)}{25(n-5)^3(n-6)^2(n-7)},\\
E_5=-\frac{6(n-4)}{125(n-5)^3(n-6)(n-7)},\\
|C|\le \frac{(n-1)(n-2)(n-3)(n-4)}{24}= \binom{n-1}{4}.
\end{gather*}
This bound holds true for $n\ge 10.$

\item $\sD=\{2,3,4,5\}$

\begin{gather*}
E_1=-\frac{n(n-1)(n-2)^2(n-3)(n-4)}{500(n-5)^3(n-6)^2(n-7)}, E_2=-\frac{2n(n-1)(n-2)(n-3)(n-4)}{125(n-5)^2(n-6)^2(n-7)(n-8)},\\ 
E_3=-\frac{9n(n-2)(n-3)^2(n-4)}{125(n-5)^3(n-6)(n-7)^2(n-8)},\\
E_4=-\frac{24(n-1)(n-2)(n-3)(n-4)}{125(n-5)^2(n-6)^2(n-7)^2(n-8)},\\
E_5=-\frac{6(n-2)(n-3)(n-4)^2}{25(n-5)^3(n-6)^2(n-7)^2(n-8)},\\
|C|\le \frac{n(n-1)(n-2)(n-3)}{120}\leq \binom{n-1}{4}.
\end{gather*}
This bound holds true for $n\ge 9.$

\end{enumerate}

\subsection{The case \texorpdfstring{$w=6,s=4$}.}  \label{App:6,4}\hfill\\
\begin{enumerate}[(i)]
\item $\sD=\{1,2,3,4\}$

\begin{gather*}
E_1=-\frac{(n-2)(n-3)(n-4)(n-13)\left(n^2-19n+100\right)}{1800(n-6)^2(n-7)^2(n-8)(n-9)},\\ E_2=-\frac{(n-2)(n-3)(n-5)\left(n^2-23n+148\right)}{300(n-6)^2(n-7)^3(n-8)},\\ 
E_3=-\frac{(n-2)(n-4)(n-5)(n-10)(n-15)}{60(n-6)^3(n-7)^2(n-8)(n-9)},
E_4=-\frac{(n-3)(n-4)(n-5)(n-10)}{15(n-6)^3(n-7)^3(n-8)},\\
E_5=-\frac{n-10}{75(n-6)^3(n-7)^2},\\
|C|\le \frac{(n-2)(n-3)(n-4)(n-5)}{24}= \binom{n-2}{4}.
\end{gather*}
This bound holds true for $n\ge 15.$

\item $\sD = \{1,4,5,6\}$

\begin{gather*}
E_1=-\frac{n(n-1)(n-2)^2(n-3)(n-4)}{180(n-6)^3(n-7)^2(n-8)},\\ E_2=-\frac{n(n-1)(n-2)(n-5)\left(n^2-27n+212\right)}{90(n-6)^2(n-7)(n-8)(n-9)(n-10)(n-11)},\\ 
E_3=-\frac{2n(n-2)(n-3)(n-5)\left(n^2-26n+178\right)}{15(n-6)^2(n-7)^2(n-8)(n-9)(n-10)(n-11)},\\
E_4=-\frac{2(n-1)(n-2)(n-4)(n-5)(n-15)}{3(n-6)^3(n-7)^2(n-8)(n-9)(n-11)},\\
E_5=-\frac{4(n-2)(n-3)(n-4)\left(n^2-24n+125\right)}{3(n-6)^3(n-7)^2(n-8)(n-9)(n-10)(n-11)},\\
|C|\le \frac{n(n-1)(n-2)(n-5)\left(n^2-23n+136\right)}{240\left(n^2-24n+125\right)} \leq \binom{n-2}{4}.
\end{gather*}
This bound holds true for $n\ge 17.$

\item $\sD=\{2,3,4,5\}$

\begin{gather*}
E_1=-\frac{(n-1)(n-2)(n-3)(n-4)(n-5)\left(n^3-34n^2+401n-1724\right)}{4500(n-6)^2(n-7)^3(n-8)^2(n-9)}, \\
E_2=-\frac{(n-1)(n-2)(n-4)^2\left(n^3-35n^2+431n-1855\right)}{450(n-6)^2(n-7)^2(n-8)^2(n-9)(n-10)},\\ 
E_3=-\frac{(n-1)(n-3)(n-4)(n-5)\left(3n^3-108n^2+1289n-5120\right)}{225(n-6)^2(n-7)^3(n-8)(n-9)^2(n-10)},\\
E_4=-\frac{(n-2)(n-3)(n-4)\left(4n^3-123n^2+1263n-4340\right)}{75(n-6)^2(n-7)^2(n-8)^2(n-9)^2(n-10)},\\
E_5=-\frac{(n-4)(n-5)\left(5n^4-194n^3+2821n^2-18244n+44376\right)}{225(n-6)^2(n-7)^3(n-8)^2(n-9)^2(n-10)},\\
|C|\le \frac{(n-1)(n-2)(n-3)(n-4)\left(n^4-39n^3+571n^2-3729n+9120\right)}{20\left(5n^4-194n^3+2821n^2-18244n+44376\right)} \leq \binom{n-2}{4}.
\end{gather*}
This bound holds true for $n\ge 15.$

\item $\sD=\{2,3,4,6\}$

\begin{gather*}
E_1=-\frac{n(n-2)(n-3)^2(n-4)\left(n^2-33n+284\right)}{900(n-6)^2(n-7)^2(n-8)^2(n-9)}, \\
E_2=-\frac{n(n-2)(n-3)(n-4)(n-5)(n-12)(n-22)}{90(n-6)^3(n-7)(n-8)^2(n-9)(n-10)},\\ 
E_3=-\frac{n(n-3)(n-4)^2(n-5)\left(n^2-27n+174\right)}{15(n-6)^3(n-7)^2(n-8)(n-9)^2(n-10)},\\
E_4=-\frac{2(n-2)(n-3)(n-4)\left(2n^3-99n^2+1369n-5820\right)}{75(n-6)^2(n-7)^2(n-8)^2(n-9)^2(n-10)},\\
E_5=-\frac{4(n-3)(n-4)(n-5)(n-12)\left(2n^2-39n+194\right)}{25(n-6)^3(n-7)^2(n-8)^2(n-9)^2(n-10)},\\
|C|\le \frac{n(n-2)(n-3)(n-4)\left(n^2-27n+158\right)}{144\left(2n^2-39n+194\right)} \leq \binom{n-2}{4}.
\end{gather*}
This bound holds true for $n\ge 30.$

\item $\sD=\{2,4,5,6\}$

\begin{gather*}
E_1=-\frac{n(n-1)(n-2)^2(n-3)(n-4)}{450(n-6)^3(n-7)^2(n-8)},\\ E_2=-\frac{n(n-1)(n-2)(n-4)(n-5)(n-25)}{135(n-6)^2(n-7)^2(n-8)(n-9)(n-10)},\\ 
E_3=-\frac{4n(n-2)(n-3)(n-4)(n-5)(n-18)}{45(n-6)^3(n-7)(n-8)^2(n-9)(n-10)},\\
E_4=-\frac{4(n-1)(n-2)(n-4)^2(n-14)}{9(n-6)^2(n-7)^2(n-8)^2(n-9)(n-10)},\\
E_5=-\frac{8(n-2)(n-3)(n-4)(n-5)(5n-56)}{45(n-6)^3(n-7)^2(n-8)^2(n-9)(n-10)},\\
|C|\le \frac{n(n-1)(n-2)(n-4)(3n-44)}{240(5n-56)} \leq \binom{n-2}{4}.
\end{gather*}
This bound holds true for $n\ge 25.$

\item $\sD=\{3,4,5,6\}$

\begin{gather*}
E_1=-\frac{n(n-1)(n-2)^2(n-3)(n-4)}{1800(n-6)^3(n-7)^2(n-8)}, E_2=-\frac{n(n-1)(n-2)(n-3)(n-4)(n-5)}{150(n-6)^3(n-7)^2(n-8)(n-9)},\\ 
E_3=-\frac{n(n-2)(n-3)^2(n-4)}{25(n-6)^2(n-7)^2(n-8)^2(n-9)},\\ E_4=-\frac{2(n-1)(n-2)(n-3)(n-4)(n-5)}{15(n-6)^2(n-7)^3(n-8)^2(n-9)},\\
E_5=-\frac{(n-2)(n-3)(n-4)^2(n-5)}{5(n-6)^3(n-7)^3(n-8)^2(n-9)},\\
|C|\le \frac{n(n-1)(n-2)(n-3)}{360} \leq \binom{n-2}{4}.
\end{gather*}

This bound holds true for $n\ge 10.$
\end{enumerate}

\subsection{The case \texorpdfstring{$w=7,s=4$}.}  \label{App:7,4}\hfill\\
\begin{enumerate}[(i)]
\item $\sD=\{1,2,3,4\}$

\begin{gather*}
E_1=-\frac{(n-3)(n-4)(n-5)(n-16)\left(n^2-23n+150\right)}{5145(n-7)^3(n-8)^2(n-9)},\\ E_2=-\frac{(n-3)(n-4)(n-6)(n-11)\left(n^2-29n+240\right)}{735(n-7)^3(n-8)^2(n-9)^2},\\ 
E_3=-\frac{2(n-3)(n-5)(n-6)(n-12)(n-20)}{245(n-7)^3(n-8)^3(n-9)},\\
E_4=-\frac{2(n-4)(n-5)(n-6)(n-11)(n-12)}{49(n-7)^3(n-8)^3(n-9)^2},\\
E_5=-\frac{8(n-10)(n-11)(n-12)}{1715(n-7)^4(n-8)^2(n-9)},\\
|C|\le \frac{(n-3)(n-4)(n-5)(n-6)}{24}= \binom{n-3}{4}.
\end{gather*}
This bound holds true for $n\ge 20.$

\item $\sD=\{1,4,5,6\}$

\begin{gather*}
E_1=-\frac{(n-1)(n-2)(n-3)(n-4)(n-5)(n-6)\left(n^3-45n^2+728n-3856\right)}{1715(n-7)^3(n-8)^3(n-9)^2(n-10)},\\ E_2=-\frac{(n-1)(n-2)(n-4)(n-6)\left(n^4-68n^3+1871n^2-23068n+102720\right)}{735(n-7)^3(n-8)^2(n-9)(n-10)(n-11)(n-12)},\\ 
E_3=-\frac{8(n-1)(n-3)(n-5)(n-6)\left(2n^4-136n^3+3409n^2-36656n+142272\right)}{735(n-7)^3(n-8)^2(n-9)^2(n-10)(n-11)(n-12)},\\
E_4=-\frac{8(n-2)(n-3)(n-6)^2\left(n^3-45n^2+668n-3216\right)}{49(n-7)^3(n-8)^3(n-9)^2(n-10)(n-12)},\\
E_5=-\frac{8(n-4)(n-5)(n-6)\left(4n^4-231n^3+4976n^2-47841n+172692\right)}{343(n-7)^4(n-8)^2(n-9)^2(n-10)(n-11)(n-12)},\\
\begin{aligned}
|C| &\le \frac{(n-1)(n-2)(n-3)(n-6)\left(3n^4-176n^3+4029n^2-41296n+156816\right)}{120\left(4n^4-231n^3+4976n^2-47841n+172692\right)}\\
&\leq \binom{n-3}{4}.
\end{aligned}
\end{gather*}
This bound holds true for $n\ge 23.$

\item $\sD=\{2,3,4,5\}$

\begin{gather*}
E_1=-\frac{(n-2)(n-3)(n-4)(n-6)A}{25725(n-7)^2(n-8)^2(n-9)^3(n-10)^2(n-11)},\\ 
E_2=-\frac{4(n-2)(n-3)(n-5)(n-6)(n-14)\left(n^4-53n^3+1099n^2-10227n+35460\right)}{8575(n-7)^3(n-8)^2(n-9)^2(n-10)^2(n-11)},\\
E_3=-\frac{6(n-2)(n-4)(n-5)\left(n^4-58n^3+1229n^2-11352n+38700\right)}{1715(n-7)^2(n-8)^2(n-9)^3(n-10)(n-11)},\\
E_4=-\frac{16(n-3)(n-4)(n-5)(n-14)\left(2n^2-47n+285\right)}{1715(n-7)^3(n-8)^2(n-9)^2(n-10)(n-11)},\\
E_5=-\frac{8(n-6)\left(5n^4-238n^3+4261n^2-34076n+103080\right)}{12005(n-7)^3(n-8)^2(n-9)^2(n-10)(n-11)},\\
\begin{aligned}
|C| &\le \frac{(n-2)(n-3)(n-4)(n-5)\left(7n^4-336n^3+6083n^2-49434n+150120\right)}{120\left(5n^4-238n^3+4261n^2-34076n+103080\right)}\\
&\leq \binom{n-3}{4},
\end{aligned}
\end{gather*}
where
\begin{align*}
    A &=n^6-77n^5+2501n^4-44023n^3+442518n^2-2399880n+5461200.
\end{align*}
This bound holds true for $n\ge 22.$

\item $\sD=\{2,3,4,6\}$

\begin{gather*}
E_1=-\frac{(n-1)(n-3)(n-4)(n-5)(n-6)A}{5145(n-7)^3(n-8)^2(n-9)^2(n-10)^2(n-11)},\\ 
E_2=-\frac{4(n-1)(n-3)(n-5)^2(n-14)(n-16)\left(n^2-40n+332\right)}{1715(n-7)^2(n-8)^3(n-9)(n-10)^2(n-11)},\\
E_3=-\frac{2(n-1)(n-4)(n-5)(n-6)\left(3n^4-187n^3+4228n^2-41548n+150720\right)}{343(n-7)^2(n-8)^3(n-9)^2(n-10)(n-11)(n-12)},\\
E_4=-\frac{8(n-3)(n-4)(n-5)\left(4n^4-273n^3+6491n^2-65742n+243360\right)}{1715(n-7)^3(n-8)^2(n-9)^2(n-10)(n-11)(n-12)},\\
E_5=-\frac{48(n-5)(n-6)\left(8n^5-522n^4+13459n^3-171769n^2+1086792n-2729600\right)}{12005(n-7)^3(n-8)^3(n-9)^2(n-10)(n-11)(n-12)},\\
\begin{aligned}
|C| &\le \frac{(n-1)(n-3)(n-4)(n-5)\left(7n^{5}-511n^{4}+14364n^{3}-196196n^{2}+1306400n-3389952\right)}{144\left(8n^5-522n^4+13459n^3-171769n^2+1086792n-2729600\right)}\\
&\leq \binom{n-3}{4},
\end{aligned}
\end{gather*}
where
\begin{align*}
    A &=n^5-80n^4+2527n^3-39548n^2+306604n-941120.
\end{align*}
This bound holds true for $n\ge 31.$

\item $\sD=\{2,4,5,6\}$

\begin{gather*}
E_1=-\frac{(n-1)(n-2)(n-3)(n-4)(n-5)(n-6)\left(n^3-45n^2+728n-4176\right)}{5145(n-7)^3(n-8)^3(n-9)^2(n-10)},\\ 
E_2=-\frac{2(n-1)(n-2)(n-4)(n-5)\left(2n^4-157n^3+4581n^2-57798n+265032\right)}{5145(n-7)^2(n-8)^2(n-9)^2(n-10)(n-11)(n-12)},\\
E_3=-\frac{64(n-1)(n-3)(n-5)^2(n-14)(n-16)\left(n^2-41n+354\right)}{5145(n-7)^2(n-8)^3(n-9)(n-10)^2(n-11)(n-12)},\\
E_4=-\frac{8(n-2)(n-3)(n-5)(n-6)\left(4n^4-221n^3+4585n^2-42276n+145944\right)}{343(n-7)^3(n-8)^2(n-9)^2(n-10)^2(n-11)(n-12)},\\
E_5=-\frac{16(n-4)(n-5)(n-6)Q}{2401(n-7)^3(n-8)^3(n-9)^2(n-10)^2(n-11)(n-12)},\\
\begin{aligned}
|C| &\le \frac{(n-1)(n-2)(n-3)(n-5)P}{240Q} \leq \binom{n-3}{4},
\end{aligned}
\end{gather*}
where
\begin{align*}
    P &=7n^5-483n^4+13412n^3-185556n^2+1267296n-3387456,\\
    Q &=8n^5-502n^4+12673n^3-161081n^2+1030386n-2646072.
\end{align*}
This bound holds true for $n\ge 29.$

\item $\sD=\{3,4,5,6\}$

\begin{gather*}
E_1 = -\frac{\left(n - 1\right) \left(n - 2\right) \left(n - 3\right) \left(n - 4\right) \left(n - 5\right) \left(n - 6\right) \left(n^{3} - 45 n^{2} + 728 n - 4656\right)}{25725 \left(n - 7\right)^3 \left(n - 8\right)^{3} \left(n - 9\right)^{2} \left(n - 10\right)}, \\
E_2 = -\frac{\left(n - 1\right) \left(n - 2\right) \left(n - 4\right)^{2} \left(n - 6\right) \left(n^{3} - 46 n^{2} + 769 n - 4620\right)}{1715 \left(n - 7\right)^2 \left(n - 8\right)^3 \left(n - 9\right)^{2} \left(n - 10\right) \left(n - 11\right)}, \\
E_3 = -\frac{8 \left(n - 1\right) \left(n - 3\right) \left(n - 4\right) \left(n - 5\right) \left(n - 6\right) \left(n^{3} - 47 n^{2} + 737 n - 3910\right)}{1715 \left(n - 7\right)^3 \left(n - 8\right)^{2} \left(n - 9\right)^{2} \left(n - 10\right)^{2} \left(n - 11\right)}, \\
E_4 = -\frac{8 \left(n - 2\right) \left(n - 3\right) \left(n - 4\right) \left(n - 6\right) \left(n - 15\right) \left(n^{2} - 24 n + 155\right)}{343 \left(n - 7\right)^2 \left(n - 8\right)^{2} \left(n - 9\right)^{3} \left(n - 10\right)^{2} \left(n - 11\right)}, \\
E_5 = -\frac{24 \left(n - 4\right) \left(n - 5\right) \left(n - 6\right)^2 \left(5 n^{4} - 246 n^{3} + 4585 n^{2} - 38568 n + 124776\right)}{12005 \left(n - 7\right)^3 \left(n - 8\right)^{3} \left(n - 9\right)^{3} \left(n - 10\right)^{2} \left(n - 11\right)}, \\
|C| \leq \frac{\left(n - 1\right) \left(n - 2\right) \left(n - 3\right) \left(n - 4\right) \left(7 n^{4} - 350 n^{3} + 6671 n^{2} - 58072 n + 189960\right)}{360 \left(5 n^{4} - 246 n^{3} + 4585 n^{2} - 38568 n + 124776\right)} \leq \binom{n-3}{4}.
\end{gather*}
This bound holds true for $n \ge 21$.

\item $\sD=\{3,4,6,7\}$

\begin{gather*}
E_1=-\frac{n(n-1)(n-2)(n-3)(n-4)(n-5)(n-38)}{1715(n-7)^3(n-8)^2(n-9)(n-10)},\\ 
E_2=-\frac{3n(n-1)(n-2)(n-4)(n-5)(n-6)(n-23)}{343(n-7)^3(n-8)^2(n-9)(n-10)(n-11)},\\
E_3=-\frac{24n(n-3)(n-4)^2(n-5)\left(n^2-61n+730\right)}{1715(n-7)^3(n-8)(n-9)^2(n-10)^2(n-11)},\\
E_4=-\frac{72(n-1)(n-3)(n-4)(n-5)(n-6)\left(4n^2-161n+1530\right)}{1715(n-7)^3(n-8)^2(n-9)^2(n-10)^2(n-11)},\\
E_5=-\frac{72(n-2)(n-4)(n-5)^2(n-6)\left(2n^2-59n+459\right)}{245(n-7)^4(n-8)^2(n-9)^2(n-10)^2(n-11)},\\
\begin{aligned}
|C| &\le \frac{n(n-1)(n-3)(n-4)\left(n^2-45n+410\right)}{504\left(2n^2-59n+459\right)} \leq \binom{n-3}{4}.
\end{aligned}
\end{gather*}
This bound holds true for $n\ge 45.$

\item $\sD=\{4,5,6,7\}$

\begin{gather*}
E_1=-\frac{n(n-1)(n-2)^2(n-3)(n-4)}{5145(n-7)^3(n-8)^2(n-9)},
E_2=-\frac{16n(n-1)(n-2)(n-3)(n-4)(n-5)}{5145(n-7)^3(n-8)^2(n-9)(n-10)},\\
E_3=-\frac{8n(n-2)(n-3)^2(n-4)(n-6)}{343(n-7)^3(n-8)^2(n-9)^2(n-10)},\\
E_4=-\frac{32(n-1)(n-2)(n-3)(n-4)(n-5)(n-6)}{343(n-7)^3(n-8)^3(n-9)^2(n-10)},\\
E_5=-\frac{8(n-2)(n-3)(n-4)^2(n-5)(n-6)}{49(n-7)^4(n-8)^3(n-9)^2(n-10)},\\
\begin{aligned}
|C| &\le \frac{n(n-1)(n-2)(n-3)}{840} \leq \binom{n-3}{4}.
\end{aligned}
\end{gather*}
This bound holds true for $n\ge 11.$

\end{enumerate}

\section{SDP bounds for \texorpdfstring{$\cH_2^n$}. and \texorpdfstring{$\cJ^{n,w}$}.}
\label{sec:SDPbounds}
The general results of \cite{S05} are specialized here for $s$-codes  in $\cH_2^n$ or $\cJ^{n,w}$.
 
Consider an $s$-code $C$ in $\cH_2^n$ with distances $\{d_{1}, \dots, d_{s}\}$. Its triple distance distribution is defined as

\begin{align*}
x_{i, j}^{t} = \frac{1}{|C| \binom{n}{i-t, j-t, t}} |\{ (u, v, z) \in C^{3} &: 
d_H(u, v) = i, d_H(u, z) = j,\\ &\;\; d_H(v,z)=i+j-2t\}|,
\end{align*}
The variables $x_{i,j}^t$ characterize the distance relations among triples of codewords. 
Together with the fact the $C$ has only $s$ distances, we obtain the following form of the SDP bound.

\begin{theorem}
\label{thm:sdp_Ham}
Let $C \subset \cH_{2}^{n}$ be an $s$-code with distance set 
$\sD = \{d_{1}, \dots, d_{s}\}$. Suppose that $x_{i, j}^{t} \in [0, 1]$  for $i, j, t \in \{0, \dots, n\}$ are variables  satisfying  the following conditions:
\begin{enumerate}[i.]
    \item
    $x_{0, 0}^{0} = 1$;
    \item
    $0 \leq x_{i, j}^{t} \leq x_{i, 0}^{0}$ and $x_{i, 0}^{0} + x_{j, 0}^{0} \leq 1 + x_{i, j}^{t}$;
    \item
    $x_{i, j}^{t} = x_{i', j'}^{t'}$ if $(i, j, i+j-2t)$ is a permutation of $(i', j', i'+ j'-2t')$;
    \item
    $x_{i, j}^{t} = 0$ if one of $i$, $j$, and $i+j-2t$ is not in $\mathscr D$.
\end{enumerate}
For $k = 0, \dots, \lfloor \frac{n}{2} \rfloor$, define
\begin{equation}
B_{k} = \left( \sum_{t} \binom{n-2k}{i-k}^{-\frac{1}{2}} \binom{n-2k}{j-k}^{-\frac{1}{2}} \beta_{i, j, k}^{t} x_{i, j}^{t} \right)_{i, j = k}^{n-k}\label{eq:B_k_ham}
\end{equation}
where
\begin{equation*}
\beta_{i, j, k}^{t} = \sum_{u = 0}^{n} (-1)^{u-t} \binom{u}{t} \binom{n-2k}{u-k}\binom{n-k-u}{i-u}\binom{n-k-u}{j-u}.
\end{equation*}
Then
\begin{equation*}
    |C| \leq {\rm max} \Big\{ \sum_{i = 0}^{n} \binom{n}{i}x_{i, 0}^{0} : B_{k} \text{ is p.s-d. for all } k= 0, \dots, \Big\lfloor \frac{n}{2} \Big\rfloor\Big\}. 
\end{equation*}
\end{theorem}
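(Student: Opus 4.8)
The plan is to obtain Theorem~\ref{thm:sdp_Ham} as a direct specialization of Schrijver's semidefinite bound for binary codes \cite{S05}, whose derivation rests on the Terwilliger algebra of the binary Hamming scheme. The only feature distinguishing an $s$-code from an arbitrary binary code is its \emph{support}: all pairwise distances are confined to $\sD\cup\{0\}$. Accordingly I would reuse Schrijver's construction verbatim and encode the $s$-code restriction through the single linear condition~(iv), in place of the minimum-distance constraints appearing in the classical formulation. First I would record the two elementary identities that make the reduction work. Taking $j=t=0$ in the definition of $x^t_{i,j}$ gives $x^0_{i,0}=\frac{1}{|C|\binom ni}\,|\{(u,v)\in C^2:d_H(u,v)=i\}|$, so that $\sum_{i=0}^n\binom ni x^0_{i,0}=\frac1{|C|}\sum_i|\{(u,v):d_H(u,v)=i\}|=|C|$; likewise $x^0_{0,0}=|C|/|C|=1$. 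Thus the objective, evaluated at the genuine triple distribution of $C$, equals $|C|$, and condition~(i) holds.

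Next I would verify that the genuine triple distribution of $C$ is feasible for the program. Condition~(iii) is the invariance of the normalized triple count under relabeling the three codewords: $x^t_{i,j}$ depends only on the unordered triple of pairwise distances $\{i,\,j,\,i+j-2t\}$, since permuting $u,v,z$ permutes these distances while leaving the count over ordered triples in $C^3$ unchanged. The inequalities~(ii) are the linear relations among the symmetrized triple counts established in \cite{S05}: nonnegativity of the counts, the monotonicity $x^t_{i,j}\le x^0_{i,0}$ obtained by marginalizing over the third codeword, and the inclusion--exclusion bound $x^0_{i,0}+x^0_{j,0}\le 1+x^t_{i,j}$. Finally, condition~(iv) is precisely the $s$-code hypothesis: for any $(u,v,z)\in C^3$ each of the three pairwise distances lies in $\sD\cup\{0\}$ (the value $0$ occurring when two of the codewords coincide), so whenever one of $i$, $j$, $i+j-2t$ is a forbidden value the defining set is empty and $x^t_{i,j}=0$.

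The semidefinite constraints come from positivity inside the Terwilliger algebra $\mathcal{A}$ of $\cH_2^n$, the algebra of matrices that commute with the stabilizer of a vertex of the cube under the coordinate action of $S_n$. The matrix assembled from the symmetrized triple distribution is, up to normalization, the average over the automorphism group of a sum of rank-one Gram matrices $\sum_{u\in C}\mathbf 1_{C-u}\mathbf 1_{C-u}^{\top}$, hence positive semidefinite and lying in $\mathcal{A}$. Schrijver's explicit block-diagonalization of $\mathcal{A}$ decomposes this matrix into the blocks $B_k$, $0\le k\le\lfloor n/2\rfloor$, whose entries are the stated combinations $\sum_t\binom{n-2k}{i-k}^{-1/2}\binom{n-2k}{j-k}^{-1/2}\beta^t_{i,j,k}x^t_{i,j}$, the coefficients $\beta^t_{i,j,k}$ being the entries of the orthogonal transformation effecting the diagonalization. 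Positive semidefiniteness of the assembled matrix is equivalent to positive semidefiniteness of every block $B_k$, which is exactly the content of \cite{S05} transcribed for the Hamming space.

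Combining the three steps, the true triple distribution of any $s$-code $C\subset\cH_2^n$ with distance set $\sD$ is a feasible point of the displayed program at which the objective attains the value $|C|$; hence $|C|$ is at most the optimum, proving the bound. The genuinely new ingredient is the substitution of the support condition~(iv) for the minimum-distance constraints, which is immediate. I expect the only real technical obstacle to be the verification that the coefficients $\beta^t_{i,j,k}$ together with the normalization $\binom{n-2k}{i-k}^{-1/2}$ reproduce \emph{exactly} Schrijver's block-diagonalization of $\mathcal{A}$; since that computation is carried out in full in \cite{S05}, here it suffices to cite it, and the remaining arguments reduce to the elementary counting identities above.
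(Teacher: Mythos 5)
Your proposal is correct and takes essentially the same route as the paper, which states Theorem~\ref{thm:sdp_Ham} without a detailed proof, presenting it as a direct specialization of Schrijver's SDP bound \cite{S05}: the appendix merely defines the triple distance distribution and adds the $s$-distance support constraint (iv), exactly your reduction. Your explicit verifications --- that the objective evaluates to $|C|$ at the genuine distribution, that conditions (i)--(iii) and (ii) are Schrijver's standard feasibility relations, and that (iv) must be read with $\sD\cup\{0\}$ to account for coinciding codewords --- correctly fill in the details the paper leaves implicit.
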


Consider a constant weight $s$-code $C$ in $\cJ^{n,w}$ with distances $\{d_{1}, \dots, d_{s}\}$. 
Let $\chi^A$ be the indicator vector of a set $A \subset \{1. \dots, n\}.$
Define a \textit{semidistance} between two vectors $u = (u_{1}, \dots, u_{n}),$ $v = (v_{1}, \dots, v_{n}) \in \mathbb{F}_2^n$  as an $n$-dimensional vector
\begin{equation*}
\bar{d}(u, v) = \chi^{\text{supp}(u)\setminus\text{supp}(v)},
\end{equation*}
where $\text{supp}(u)=\{i: u_i=1\}$ is the support of $u$.
Then the triple distance distribution of $C$ is defined as
\begin{equation*}
x_{i, j}^{t, r} = \frac{1}{|C| \binom{w}{i-t, j-t, t}\binom{n-w}{i-r, j-r, r}}  \lambda_{i, j}^{t, r},
\end{equation*}
where
\begin{align*}
\lambda_{i, j}^{t, r} = |\{ (u, v, z) \in C^{3} : 
d_{J}(u, v) &= i, d_{J}(u, z) = j,\\
& \bar{d}(u, v) \cdot \bar{d}(u, z) = t, \bar{d}(v, u) \cdot \bar{d}(z, u) = r\}|.
\end{align*}

Next we formulate Schrijver's SDP bound for the Johnson space, taking account of the fact that $C$ is 
an $s$-code and of the ensuing additional constraints on the distance coefficients $x_{i,j}^{t,r}.$ 

\begin{theorem}
\label{thm:sdp_Joh}
Let $C \subset \cJ^{n, w}$ be an $s$-code with distance set $\sD=\{d_{1}, \dots, d_{s}\}$. Suppose that $x_{i, j}^{t, r} \in [0, 1]$  for $i, j, t, r \in \{0, \dots, {\rm min}\{w, n-w\}\}$ are variables satisfying the following conditions:
\begin{enumerate}[i.]
    \item
    $x_{0, 0}^{0, 0} = 1$;
    \item
    $0 \leq x_{i, j}^{t, r} \leq x_{i, 0}^{0, 0}$ and $x_{i, 0}^{0, 0} + x_{j, 0}^{0, 0} \leq 1 + x_{i, j}^{t, r}$;
    \item
    $x_{i, j}^{t, r} = x_{i', j'}^{t', r'}$ if $t-r = t'-r'$ and $(i, j, i+j-t-r)$ is a permutation of $(i', j', i'+ j'-t'-r')$;
    \item
    $x_{i, j}^{t, r} = 0$ if one of $i$, $j$, and $i+j-t-r$ is not in $\sD$.
\end{enumerate}
For $k = 0, \dots, \lfloor \frac{w}{2} \rfloor$, $l = 0, \dots, \lfloor \frac{n-w}{2} \rfloor$,   define
\begin{equation}
B_{k, l} = \left( \sum_{t} \alpha(n, i, j, k, l)\beta_{i, j, k}^{t, w}\beta_{i, j, l}^{r, n-w} x_{i, j}^{t, r} \right)_{(i, j) = (k, l)}^{(n-k, n-l)} \label{eq:B_kl}
\end{equation}
where
\begin{equation*}
\beta_{i, j, k}^{t, a} = \sum_{u = 0}^{a} (-1)^{u-t} \binom{u}{t} \binom{a-2k}{u-k}\binom{a-k-u}{i-u}\binom{a-k-u}{j-u},
\end{equation*}
for $a = w, n-w$, and
\begin{equation*}
\alpha(n, i, j, k, l) = \binom{n-2k}{i-k}^{-\frac{1}{2}} \binom{n-2k}{j-k}^{-\frac{1}{2}} \binom{n-2l}{i-l}^{-\frac{1}{2}} \binom{n-2l}{j-l}^{-\frac{1}{2}}.
\end{equation*}
Then 
\begin{equation*}
    |C| \leq {\rm max} \left\{ \sum_{i = 0}^{n} \binom{w}{i}\binom{n-w}{i}x_{i, 0}^{0, 0} : B_{k, l} \text{ is p.s.-d. for all } k, l\right\}. 
\end{equation*}
\end{theorem}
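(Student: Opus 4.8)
The plan is to obtain this statement as a specialization of Schrijver's semidefinite programming bound \cite{S05}, adapted from the Hamming scheme to the Johnson scheme and then restricted to $s$-codes. The starting point is the Terwilliger algebra of the Johnson association scheme, the (noncommutative) algebra generated by the adjacency matrices of the scheme together with the diagonal projections onto fixed-weight coordinate patterns. Schrijver's method attaches to any code $C$ the triple distribution, here the normalized counts $x_{i,j}^{t,r}$, and shows that the matrix $\sum_{(u,v,z)\in C^3} M(u,v,z)$, where the $M(u,v,z)$ run over the natural basis matrices of the algebra, is positive semidefinite because it is a nonnegative combination of rank-one Gram matrices. First I would verify that the $x_{i,j}^{t,r}$ satisfy the elementary constraints (i)--(iii): the normalization $x_{0,0}^{0,0}=1$ follows from $C\neq\emptyset$; the bounds $0\le x_{i,j}^{t,r}\le x_{i,0}^{0,0}$ and $x_{i,0}^{0,0}+x_{j,0}^{0,0}\le 1+x_{i,j}^{t,r}$ are the standard counting inequalities for the number of codewords at prescribed (semi)distances from a fixed pair; and the symmetry (iii) expresses invariance of a triple under relabeling of its three members, which permutes the roles of the pairwise distances while preserving the overlap statistics recorded by $t$ and $r$.

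The $s$-code hypothesis enters only through constraint (iv): since all pairwise Johnson distances in $C$ lie in $\sD$, any triple for which one of $i$, $j$, $i+j-t-r$ falls outside $\sD$ contributes nothing, so the corresponding variable vanishes. This is precisely the mechanism by which restricting to few distances shrinks the feasible region and tightens the bound. Next I would carry out the block diagonalization of the Terwilliger algebra of $\cJ^{n,w}$: its irreducible blocks are indexed by a pair $(k,l)$ with $0\le k\le\lfloor w/2\rfloor$ and $0\le l\le\lfloor (n-w)/2\rfloor$, and the entries of each block arise by applying the intertwiners whose matrix elements are the polynomials $\beta_{i,j,k}^{t,a}$ for $a\in\{w,n-w\}$, with the normalizing factor $\alpha(n,i,j,k,l)$ coming from the dimensions of the underlying isotypic components. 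The positive semidefiniteness of $\sum_{(u,v,z)} M(u,v,z)$ is then equivalent to the simultaneous positive semidefiniteness of all the blocks $B_{k,l}$.

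Finally I would identify the objective: summing the single-pair counts $x_{i,0}^{0,0}$ against the sizes $\binom{w}{i}\binom{n-w}{i}$ of the Johnson distance-$i$ shells recovers $|C|$, so maximizing over all feasible $(x_{i,j}^{t,r})$ produces an upper bound on the code size, which is the claimed inequality.

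The main obstacle I expect is the explicit computation of the block-diagonalization data, namely deriving the coefficients $\beta_{i,j,k}^{t,a}$ and $\alpha(n,i,j,k,l)$ and checking that they correctly realize the isomorphism from the Terwilliger algebra to its block-diagonal form for the Johnson scheme rather than the Hamming scheme treated in \cite{S05}. This is a representation-theoretic bookkeeping step: one must track how the weight-$w$ and weight-$(n-w)$ parts of the support interact, which is exactly why the Johnson blocks carry the double index $(k,l)$ and a product $\beta_{i,j,k}^{t,w}\beta_{i,j,l}^{r,n-w}$ of two Hamming-type intertwiners, in contrast to the single index $k$ and single factor appearing in Theorem~\ref{thm:sdp_Ham}. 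Once this factorization is in place, the remaining verifications are routine.
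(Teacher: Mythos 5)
Your proposal is correct and follows essentially the same route as the paper, which states Theorem~\ref{thm:sdp_Joh} without a detailed proof as a direct specialization of Schrijver's SDP bound \cite{S05} to $s$-codes: the triple distance distribution, the counting constraints (i)--(iii), the $s$-code restriction entering only through (iv), the block diagonalization of the Terwilliger algebra of $\cJ^{n,w}$ into blocks $B_{k,l}$ built from a product of two Hamming-type intertwiners $\beta_{i,j,k}^{t,w}\beta_{i,j,l}^{r,n-w}$ (reflecting the $S_w\times S_{n-w}$ stabilizer of a base codeword), and the objective $\sum_i\binom{w}{i}\binom{n-w}{i}x_{i,0}^{0,0}=|C|$ are exactly the ingredients the paper's citation presupposes. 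Your outline of the representation-theoretic bookkeeping matches the intended argument, so there is nothing to flag.
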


}

\end{document}